\newtheorem{theorem}{Theorem}[section]
\newtheorem{lemma}[theorem]{Lemma}
\newtheorem{prop}[theorem]{Proposition}
\newtheorem{cor}[theorem]{Corollary}
\newtheorem{definition}[theorem]{Definition}
\newtheorem{remark}[theorem]{Remark}
\newtheorem*{theorem*}{Theorem}
\theoremstyle{definition}
\theoremstyle{remark}
\numberwithin{equation}{section}
\newcommand{\overbar}[1]{\mkern 1.5mu\overline{\mkern-1.5mu#1\mkern-1.5mu}\mkern 1.5mu}
\begin{document}


\title[Homogeneous analytic Hilbert modules]{Homogeneous analytic Hilbert modules -- \\the case of non-transitive action}


\author[S. Biswas]{Shibananda Biswas}
\address[S. Biswas]{Department of Mathematics and Statistics, Indian Institute of Science Education and Research (IISER) Kolkata, Mohanpur - 741246, Nadia, West Bengal, India}
\email{shibananda@iiserkol.ac.in}
\thanks{}

\author[P. Deb]{Prahllad Deb}
\address[P. Deb]{Department of Mathematics, IIIT-Delhi, New Delhi - 110020, India}
\email{prahllad@iiitd.ac.in}
\thanks{The second named author gratefully acknowledges the postdoctoral fellowship in the Department of Mathematics at Ben-Gurion University of the Negev, as well as the professional development fund provided by IIIT-Delhi, for supporting this research.}

\author[S. Hazra]{Somnath Hazra}
\address[S. Hazra]{Institute for Advancing Intelligence\\ TCG Centres for Research and Education in Science and Technology \\ Kolkata - 700091, and Academy of Scientific and Innovative Research (AcSIR)\\ Ghaziabad - 201002\\ India}
\email{somnath.hazra@tcgcrest.org}
\thanks{}

\author{Dinesh Kumar Keshari}
\address[D. Keshari]{National Institute of Science Education and Research (NISER), An OCC of Homi Bhabha National Institute, School of Mathematical Sciences, Bhubaneswar, Post-Jatni\\ Khurda - 752050, India}
\curraddr{}
\email{dinesh@niser.ac.in}
\thanks{The fourth named author gratefully acknowledges the generous support provided by NISER for facilitating the in person discussions of the authors at its campus.}

\author[G. Misra]{Gadadhar Misra}
\address[G. Misra]{Indian Statistical Institute, Bangalore - 560059\\           and Indian Institute of Technology, Gandhinagar - 382055, India}
\curraddr{}
\email{gm@isibang.ac.in}

\subjclass[2020]{Primary 47B32, 47B13, 47A13}
\keywords{Analytic Hilbert Modules, the curvature invariant, Bi-holomorphic automorphism group, Fundamental domain, symmetrized bi-disc, K\"{a}hler-Einstein metric}
\date{\today}

\dedicatory{}


\begin{abstract} This work investigates analytic Hilbert modules $\mathcal{H}$, over the polynomial ring, consisting of holomorphic functions on a $G$-space $\Omega \subset \mathbb{C}^m$ that are homogeneous under the natural action of the group $G$. In a departure from the past studies of such questions, here we don't assume transitivity of the group action.  The primary finding reveals that unitary invariants such as curvature and the reproducing kernel of a homogeneous analytic Hilbert module can be deduced from their values on a fundamental set $\Lambda$ of the group action. Next, utilizing these techniques, we examine the analytic Hilbert modules associated with the symmetrized bi-disc $\mathbb{G}_2$ and its homogeneity under the automorphism group of $\mathbb{G}_2$. It follows from one of our main theorems that none of the weighted Bergman metrics on the symmetrized bi-disc is K\"{a}hler-Einstein.
\end{abstract}

\maketitle


\bibliographystyle{amsplain}
\baselineskip = 16pt

\section{Introduction}

Let $G$ be a locally compact second countable group and $\Omega \subset \mathbb  C^m$ be a bounded open connected $G$-space. We assume that the group $G$ acts on $\Omega$ holomorphically, that is, the map $\alpha_g(z):= g\cdot z$, $g\in G, z\in \Omega$, is bi-holomorphic for each fixed $g\in G$ and $g \mapsto \alpha_g$ is a homomorphism.  We also assume that the map extends to a holomorphic map in some neighbourhood (depending on $g$) of the closure $\overbar{\Omega}$ of $\Omega$. Finally, we let $g^1:=\alpha_g^1, \ldots , g^m:=\alpha_g^m$, where $\alpha_g^i$, $1\leq i \leq d$, are the coordinate functions of the holomorphic map $\alpha_g: \Omega \to \Omega$.

Let $A$ be an indexing set and $\Lambda_\alpha$, $\alpha\in A$  be a partition of $\Omega$ into $G$-orbits. Let $\Lambda$ be the disjoint union of the sets consisting of exactly one element from each $G$-orbit $\Lambda_\alpha$, $\alpha\in A$. Thus, 
\[G\cdot \Lambda :=\{g \cdot z \mid g\in G, \, z\in \Lambda \}=\Omega.\] Such a set $ \Lambda $ is called a fundamental set. Note that the choice of $ \Lambda $, in general, is not unique. If the group $ G $ acts on $ \Omega $ transitively, then the fundamental set $\Lambda$ is a singleton by definition. 

Let $\mathcal{H}$ be a Hilbert space consisting of holomorphic functions defined on $\Omega$. Assume that $\boldsymbol{M}:=(M_1, \ldots , M_m)$, the commuting tuple of operators of multiplication by the coordinate functions $z_1, \ldots , z_m$ on $\mathcal{H}$ is bounded.
The group $G$ acts on the commuting $m$-tuple $\boldsymbol{M}$  by setting 
 $g\cdot \boldsymbol{M}= (\alpha_g^1(\boldsymbol{M}), \ldots , \alpha_g^m(\boldsymbol{M}))$. This is then the $m$-tuple of multiplication operators $(M_{\alpha_g^1}, \ldots , M_{\alpha_g^m})$. We will abbreviate this to $g\cdot \boldsymbol{M} = (M_{g^1} , \ldots , M_{g^m})$, where $M_{g^i}$ is a well defined bounded linear operators on $\mathcal H$. 
This is easy to see: Each $g^i$, $1\leq i \leq d$, is a holomorphic function in some open set containing $\overline{\Omega}$ and therefore, $g^i(\boldsymbol{M})$ defined using the holomorphic functional calculus is a bounded operator. It follows that $g\cdot \boldsymbol{M}$ is nothing but the $m$-tuple  $\boldsymbol{M}_g:= (M_{g^1} , \ldots , M_{g^m})$ of multiplication operators by the functions $g^1, \ldots, g^m$ on $\mathcal{H}$. 

\begin{definition} Let $\Omega$ be a $G$-space and $\mathcal{H}$ be a Hilbert space consisting of holomorphic functions defined on $\Omega$. 
We say that the commuting tuple $\boldsymbol{M}$ acting on $\mathcal{H}$ is \textit{homogeneous} with respect to the action of the group $G$ if, for all $g \in G$, there exists a unitary operator $U_g$ such that 
\[U_g^* \boldsymbol{M}_g U_g : = (U_g^* M_{g^1}U_g, \ldots , U_g^* M_{g^m} U_g) = (M_1, \ldots, M_m) = \boldsymbol{M}.\]
\end{definition} 

To check if a commuting tuple $\boldsymbol{M}$ is homogeneous and eventually find all of them, 
one must first identify a complete set of unitary invariants, say $\mathcal{I}(\boldsymbol{M})$ for $\boldsymbol{M}$, and express the invariants $\mathcal{I}(\boldsymbol{M}_g)$ in terms of $\mathcal{I}(\boldsymbol{M})$, that is, find a rule $\Phi_g$ to obtain the set of invariants $\mathcal{I}(\boldsymbol{M}_g)$ from $\mathcal{I}(\boldsymbol{M})$. One may think of such a relationship as a change of variable. If we assume that $\boldsymbol{M}$ is unitarily equivalent to $\boldsymbol{M}_g$, $g\in G$, then we have 
\begin{equation} \label{changeV}
\mathcal{I}(\boldsymbol{M}) = \mathcal{I}(\boldsymbol{M}_g) = \Phi_g (\mathcal{I}(\boldsymbol{M})),\,\, g\in G,
\end{equation}
where the first equality is forced since we are assuming that $\boldsymbol{M}$ and $\boldsymbol{M}_g$ are equivalent (and $\mathcal{I}$ is a set of complete unitary invariants) and the second is the change of variable formula for $\mathcal{I}$. In this note, we consider $\boldsymbol{M}$ on Hilbert spaces that are analytic Hilbert modules and exploit the relation given in \eqref{changeV} to determine its homogeneity using two types of invariants, namely, the reproducing kernel $K$ modulo a ``change of scale'' and the curvature $\mathcal{K}$ induced by $K$. All these notions are discussed in what follows. We show that these invariants are determined from their restriction to the fundamental set $\Lambda$.

Let ${\mathcal M}_n(\mathbb C)$ denote the vector space of all $n\times n$ complex matrices and $\langle ~,~ \rangle_{{\mathbb C}^n}$ be the standard inner product in ${\mathbb C}^n$. Let $K:\Omega\times\Omega \rightarrow {\mathcal M}_n(\mathbb C)$ be a function holomorphic in the first variable and anti-holomorphic in the second. We say that $(\mathcal{H}, K)$ is a reproducing kernel Hilbert space if $\mathcal{H}$ consists of $\mathbb C^n$-valued holomorphic functions defined on $\Omega$ and $K(\cdot, w)\zeta$ is in $\mathcal H$, $w\in \Omega$, $\zeta\in{\mathbb C}^n$ and finally, $K$ has the reproducing property, namely, $$\langle f,K(\cdot,w)\zeta\rangle =
\langle f(w),\zeta\rangle_{{\mathbb C}^n},\, w\in \Omega.$$
The linear span of $K(\cdot, w)\zeta$ , $w\in \Omega$, $\zeta\in{\mathbb C}^n$, is  dense in $\mathcal{H}$. The function $K$ is said to be the reproducing kernel of the Hilbert space $\mathcal{H}$ and it possesses the very important property, namely the non-negative definiteness property: For all subsets $\{w_1, \ldots , w_p\}$ of $\Omega$ and $\zeta_1,\ldots,\zeta_p \in{\mathbb C}^n$, $p\in \mathbb{N}$, 
$$\sum_{i,j=1}^p\langle K(w_i,w_j)\zeta_j,\zeta_i\rangle \geq 0.$$
Moreover, $K$ is Hermitian, that is, $K(z,w) = {K(w,z)}^*$, $z,w\in \Omega$. Any Hermitian function $K:\Omega\times \Omega \to {\mathcal M}_n(\mathbb C)$ possessing the non-negative definiteness property is said to be a non-negative definite kernel. Let $\mathcal{H}^0$ be the linear span of the vectors $\{K(\cdot, w)\zeta: w\in \Omega, \zeta\in\mathbb C^n\}$. Assuming that $K$ is non-negative definite, an inner product on $\mathcal{H}^0$ is defined by first setting $$\langle K(\cdot, w_1)\zeta, K(\cdot, w_2)\eta\rangle:= \langle{K(w_2,w_1)\zeta,\eta}\rangle_{{\mathbb C}^n}$$ and then extending it to the linear span $\mathcal{H}^0$ of these vectors. Completing $\mathcal{H}^0$ with respect to the norm induced by the inner product on $\mathcal{H}^0$, we obtain a functional Hilbert space in which $K$ serves as a reproducing kernel (Moore's Theorem, see \cite[Theorem 2.14]{PR}).  

Unless we mention otherwise, we will use scalar kernels, that is,  $K:\Omega\times\Omega \rightarrow \mathbb C$. In what follows, we then assume that the $m$-tuple $\boldsymbol{M}:= (M_1, \ldots , M_m)$ of multiplication by the coordinate functions (and hence multiplication by any polynomials $p \in \mathbb{C}[\boldsymbol{z}]$ in $m$-variables) is bounded on the reproducing kernel Hilbert space $(\mathcal{H}, K)$.
The reproducing kernel (scalar valued) Hilbert space $(\mathcal{H}, K)$ is a \textit{Hilbert module over the polynomial ring $\mathbb{ C } [ \boldsymbol{ z }] $}, equipped with the module map $\mathfrak{m}: \mathbb{ C } [ \boldsymbol{ z }] \times \mathcal{H} \to \mathcal{H}$ defined by $\mathfrak{m} (p, f) = p f$, where $p f $ denotes the point-wise multiplication. This is equivalent to saying that $p \mapsto \mathfrak{m}_p$, where $\mathfrak{m}_p: \mathcal{H} \to \mathcal{H}$ is defined by setting 
$\mathfrak{m}_p(f) = p f$, $p\in \mathbb{ C } [ \boldsymbol{ z } ] $, is a homomorphism into the algebra of bounded operators on $\mathcal{H}$. 
Hilbert modules over a function algebra was introduced by R. G. Douglas and a detailed study appeared in \cite{RGDVIP}. The Hilbert modules considered here are modules over the polynomial ring $\mathbb{C}[\boldsymbol{z}]$. 

We also assume throughout the paper that the polynomial ring $\mathbb{C}[\boldsymbol{z}] \subset \mathcal{H}$ is dense. Any such reproducing kernel Hilbert space $(\mathcal{H}, K)$ is said to be an \textit{analytic} Hilbert module over the polynomial ring $\mathbb{C}[\boldsymbol{z}]$, see \cite{ChenGuo}. We let $\operatorname{HM}_a(\Omega)$ be the set of all analytic Hilbert modules $(\mathcal{H}, K)$ over $ \mathbb{ C } [ \boldsymbol{ z } ]$ consisting of holomorphic functions defined on $\Omega$. 

A very useful consequence of the polynomial density is that 
\[\dim \big (\bigcap_{i=1}^m \ker (M_i - w_i)^*\big ) = 1,\]
see \cite[Remark, p. 285]{RGDGM}. Suppose that $(\mathcal{H},K)$ is an analytic Hilbert module, and $K$ is the reproducing kernel for $\mathcal{H}$ defined on $\Omega$. The map $\gamma:\Omega^* \to \mathcal{H}$ defined by setting $\gamma(\overbar{w}) = K(\cdot, w)$ is holomorphic on $\Omega^*$ by definition. Thus, 
\[\Omega^*\ni \overbar{w} \stackrel{\Gamma}{\longmapsto} \bigcap_{i=1}^m \ker(M_i-w_i)^*,\,\, \Omega^*=\{\overbar{w} \mid w\in \Omega\}\]
is a holomorphic  map into the projective space $P_1(\mathcal{H})$ of the Hilbert space $\mathcal{H}$: 
\[P_1(\mathcal{H}):= \{[f]\mid f\in \mathcal{H}, f \sim g \text{~if~} f = \lambda g \text{ ~for some~ } 0 \ne \lambda\in \mathbb{C}\}.\] 
The holomorphic frame $\gamma$ defines a  Hermitian holomorphic line bundle $E$ on $\Omega^*$: The fibre at $\overbar{w}$ of $E$ is $\cap_{i=1}^m \ker (M_i - w_i)^*$ and it embeds into $\mathcal{H}$ as $\Gamma(\overbar{w})$. Declaring this embedding to be an isometry, we get a Hermitian structure on $E$ defined by $K(w,w)$. 
Any two holomorphic frames of the bundle $E$ differ by a holomorphic multiplier, that is, if $K(\cdot, w)$ and $K^\prime(\cdot, w)$ both serve as holomorphic frames of $E$, then there is a  non-vanishing holomorphic function $\varphi$ defined on some open subset of $\Omega$ such that 
\begin{equation}\label{changeofscale}
K(z,w) = \varphi(z) K^\prime(z,w) \overline{\varphi(w)},\,\, z,w\in \Omega.\end{equation}
This is called a change of scale in \cite{CS} and is a particular instance of the second equality in Equation \eqref{changeV}.
Then it follows that two analytic Hilbert modules are (unitarily) isomorphic if and only if there is a non-vanishing change of scale. 
It is clear that to construct an invariant for the analytic Hilbert module $(\mathcal{H}, K)$, one must get rid of the ambiguity introduced by the change of scale. Indeed, Cowen and Douglas show that the curvature $\mathcal{K}$ of the line bundle $E$ (sometimes, we also refer to $\mathcal{K}$ as the curvature of the analytic Hilbert module $(\mathcal{H}, K)$), namely, 
\begin{equation} \label{curvaturedefn} \mathcal{K}(\bar{w}) = -\sum_{ i, j }^{ m }  \frac{ \partial^2 }{ \partial w_i \partial \overline{ w_j } } \log K (w, w)   dw_i \wedge d\overline{ w_j}, w \in \Omega,
\end{equation} 
will do the trick.  It follows that $K^\prime$ is obtained from $K$ by a change of scale if and only if their curvature $(1,1)$ forms are equal. 
Thus, the curvature $\mathcal K$ of the line bundle $E$ is a complete invariant for the analytic Hilbert module $(\mathcal{H}, K)$. Setting
\begin{equation}\label{eqn:1.4}
 \mathbb{K}(w):= \big(\!\! \big( \frac{ \partial^2 }{ \partial w_i \partial \overline{ w_j } } \log K ( { w }, { w } ) \big)\! \!\big),  w \in \Omega,
\end{equation}
we note that $\mathbb{K}(w)$ is an $n\times n$ Hermitian matrix. Moreover, it is non-negative definite for each $w\in \Omega$, see \cite[Proposition 2.1]{BKM}. Thus, $\mathbb{K}$ defines a Hermitian structure on the trivial holomorphic bundle of rank $n$ defined on $\Omega$. If $K$ is the Bergman kernel of $\Omega$, then $\mathbb{K}$ is defined to be the Bergman metric of the trivial holomorphic bundle of rank $n$ on $\Omega$.   

Next, we give the definition of $ G-$homogeneous Hilbert modules as it is often easier to discuss $G$-homogeneous commuting tuples of operators in the language of Hilbert modules.
\begin{definition}
An analytic Hilbert module $(\mathcal{H}, K)$ is said to be $G$-homogeneous if the commuting tuple $(M_1, \ldots , M_m)$ of operators induced by the module maps $\mathfrak{m}_{z_k}$, $k=1,\ldots , m$, is homogeneous under the group $G$. 
\end{definition}

Homogeneous analytic Hilbert modules have been studied in the past, assuming that the $G$-action is transitive \cite{BM, BM03, DH, H, K, KM11, M, MU, PV, Wil}. In this case, assigning a suitable value to $K(0,0)$, one determines $K(z,z)$ for any $z\in \Omega$. Examining this further, it is not hard to see that such a kernel must be a positive real power $\lambda$ of the Bergman kernel $B$. As long as $\lambda$ is picked so as to ensure $B^\lambda$ is non-negative definite, the description of all the $G$- homogeneous commuting tuples is complete. Homogeneity under non-transitive action might be traced back to the papers \cite{Gellar,AHHK}, where the group is taken to be the circle group. More recently, such questions have been investigated in the papers \cite{CY, GKP,GKMP}.  These questions (both in the case of transitive action as well as the case where it is not) overlap with the question of determining imprimitivity systems, see \cite{MNV} for details. 
 
To determine which analytic Hilbert module $\mathcal{H}$ is homogeneous, indeed, to find all of them, one may adopt one of the two methods outlined above. The first involving the kernel function says that an analytic Hilbert module $(\mathcal{H}, K)$ is homogeneous if and only if the kernel $K$ is quasi-invariant. A precise statement is given in Theorem \ref{thm:2.1-Kquasi}. The second approach is to use the curvature and conclude similarly that an analytic Hilbert module $(\mathcal{H}, K)$ is homogeneous if and only if its curvature $\mathcal{K}$ is quasi-invariant. Again, a precise statement is in Proposition \ref{curvature transformation}. In this paper, we combine both of these methods to obtain some interesting new results. In the first half of the paper, following the work of \cite{GM}, we provide a recipe of constructing new examples of homogeneous analytic Hilbert modules starting from known examples.  In the second half, we apply these results to study homogeneous analytic Hilbert modules on the symmetrized bidisc $\mathbb G_2$. 

Starting from a homogeneous analytic Hilbert module, we describe two algorithms to construct new ones. It  depends crucially on the notion of a Wallach set that we recall now. Let $t$ be any positive real number. 
If $\Omega$ is simply connected, and $K$ is a non-negative definite kernel with $K(z,w) \ne 0$, $z,w\in \Omega$, then take the principal branch of the logarithm on $\Omega\times \Omega$ to ensure that $K^t$ is holomorphic in the first variable and anti-holomorphic in the second. However, it is not obvious that it is non-negative definite for any $t\not \in \mathbb{N}$. It is therefore natural to define the set 
\[\mathcal{W}_\Omega(K):=\{ ( 0 < )\, t \mid K^t \text{ is non-negative definite}\}\]  
known as the Wallach set. There are a few instances, where $\mathcal{W}_\Omega(K)$ is completely known, see for instance, \cite{FK}. In this paper, we consider a subset $\mathcal{W}_{\Omega,a}(K)$ of the Wallach set, namely, 
\[\mathcal{W}_{\Omega,a}(K):= \{( 0 < )\,t \mid (\mathcal{H}, K^t) \in \operatorname{HM}_a(\Omega)\}.\]
Since the Schur product of two non-negative definite matrices is again non-negative definite, it follows that if $K$ is a non-negative definite kernel, then $K^n$, $n\in \mathbb{N}$, is also a non-negative definite kernel. Thus it is evident that $\mathbb{N} \subseteq \mathcal{W}_{\Omega}(K)$. In fact, it is not hard to verify that $\mathbb{N} \subseteq \mathcal{W}_{\Omega,a}(K)$ whenever $(\mathcal{H}, K)$ is in $\operatorname{HM}_a$.

Note that even if $\lambda$ is not in $\mathcal{W}_\Omega(K)$, the positive real analytic function $K(w,w)^\lambda$ defines a hermitian structure for the trivial holomorphic vector bundle on $\Omega$. In this paper, we assume $\lambda\in \mathcal{W}_{\Omega,a}(K)$, only to ensure the existence of an analytic Hilbert module $\mathcal{M}$, where $K^\lambda$ serves as its reproducing kernel. However, if we drop this assumptions, the (appropriate and obviously modified) results below will be true of the trivial holomorphic hermitian vector bundle defined on $\Omega$ with hermitian structure $K^\lambda$. This is made explicit in Remark \ref{vb}.

Recall from \cite[Corollary 2.4]{GM} that if $\Omega$ is simply connected and $K$ is nowhere vanishing on $\Omega\times \Omega$, then 
\begin{equation} \label{curvaturematrix} \mathbb{K}^{[0]}(z,w) := K({z}, {w})^{2} \big(\!\! \big( \frac{ \partial^2 }{ \partial z_i \partial \overline{ w_j } } \log K ( { z }, { w } ) \big)\! \!\big), ~~ z, w \in \Omega,\end{equation} 
defines a non-negative definite kernel on $ \Omega\times \Omega $. 

So, for $ \lambda \in \mathcal{ W }_{ \Omega} ( K ) $, the function $\mathbb{K}^{[\lambda]}:= K^{\lambda} {\mathbb K^{[0]}} (=K^{\lambda+2} \mathbb{K})$, is a non-negative definite kernel on $\Omega$. 
What is more is that the Hilbert space $\mathbb{H}^{[\lambda]}$ obtained by taking the completion of the linear span of the holomorphic functions 
$\big \{\mathbb{K}^{[\lambda]}(\cdot, w ) \zeta \mid \zeta \in \mathbb{C}^m,\,w\in \Omega \big \}$
is a Hilbert space of holomorphic functions defined on $\Omega$ taking values in $\mathbb{C}^m$.  Moreover, the polynomial ring $\mathbb{C}[\boldsymbol{z}] \otimes \mathbb{C}^m$ is dense in $\mathbb{H}^{[\lambda]}$, provided $(\mathcal{H}, K^\lambda) \in \operatorname{HM}_a(\Omega)$. This has two consequences. First,  if $ \lambda \in \mathcal{ W }_{ \Omega, a } ( K ) $, then the Hilbert module $(\mathbb{H}^{[\lambda]}, \mathbb K^{[\lambda]})$ is homogeneous. This is proved in Proposition \ref{homog of K^[lambada]}.  Second, $(\mathbb{H}^{[\lambda]}_{\det}, \det(\mathbb{K}^{[\lambda]}))$ is an analytic Hilbert module and is homogeneous whenever $(\mathcal{H}, K)$ is homogeneous, see Theorem \ref{det theorem}. The same argument would show that $(\mathbb{H}^{[0]}_{\det}, \det(\mathbb{K}^{[0]}))$ is a homogeneous analytic Hilbert module. 

If $\Omega$ is a Cartan domain (it is a bounded symmetric domain realized as a subset of $\mathbb{C}^m$ for some $m > 0$), then the bi-holomorphic automorphism group of $\Omega$ is transitive and the Bergman kernel $B_\Omega$, indeed, any $B_\Omega^\lambda$, $\lambda\in \mathcal{W}(B_\Omega)$, is quasi-invariant.  The function $\det(\mathbb{B}_\Omega)$ is a scalar multiple of the Bergman kernel $B_\Omega$, see Corollary \ref{multipleBerg}. Consequently, the kernel $\det(\mathbb{B}_\Omega^{[\lambda]})$, $\lambda\in \mathcal{W}(B_\Omega)$ is a power of the Bergman kernel $B_\Omega$. So in case of transitive action, the construction above does not give us any new example apart from the weighted Bergman space itself. A non-trivial consequence of this property is the following: 
\begin{align*}
\operatorname{tr}(\overbar{\partial}\big ( (\mathbb{B}_\Omega^{[\lambda]})^{-1} \partial \mathbb{B}_\Omega^{[\lambda]} \big ) &=\overbar{\partial}\partial \log \det \mathbb{B}_\Omega^{[\lambda]}
= \alpha \overbar{\partial}\partial \log B_\Omega,
\end{align*}
where  $\alpha$ is some positive constant and $\overbar{\partial}\partial \log B_\Omega$ is the curvature $(1,1)$ form of $B_\Omega$ as defined in Equation \eqref{curvaturedefn}. Thus, as is well-known (see \cite[Chapter 4, Proposition 3]{Mok}), these are K\"{a}hler-Einstein metrics on $\Omega$. A natural question is to ask: If the kernel $\det(\mathbb{B}_\Omega^{[\lambda]})$ is some power of $B_\Omega$ even if $\Omega$ is not a Cartan domain. 

In the second half of the paper, applying the previous considerations to $\Omega:=\mathbb{G}_2$, the symmetrized bi-disc and taking $G$ to be the automorphism group of $\mathbb{G}_2$, we show that for the weighted Bergman kernels defined on $\mathbb{G}_2$, this is not so.  
 As a result, in this case, starting from $\det(\mathbb{K}^{[\lambda]})$ we produce many more new examples of $\operatorname{Aut}(\mathbb{G}_2)$- homogeneous analytic Hilbert modules. 
As an application of our main theorem, we also show that none of the weighted Bergman kernels is K\"{a}hler-Einstein (see Corollary \ref{ke}). Any non-negative definite kernel $K$ on $\mathbb{G}_2$ with an affirmative answer to this question would provide an example of a  K\"{a}hler-Einstein metric on $\mathbb{G}_2$. 
Moreover, in this case,  Theorem \ref{thm_characterization} provides an explicit criterion in terms of the curvature invariant for detecting when a Hilbert module $\mathcal{H}$ in $\operatorname{HM}_a(\mathbb{G}_2)$ is $G$-homogeneous. 

Next, we recall that the weighted Bergman modules $(\mathbb{A}^{(\lambda)}(\mathbb{G}_2), B^{(\lambda)})$ are homogeneous under $G$.  The curvature of the Bergman kernel $B$ of $\mathbb{G}_2$ was computed earlier; see \cite{CY}. Here, we compute the curvature for the weighted Bergman kernels $B^{(\lambda)}$ defined on $\mathbb{G}_2$. Since the curvature serves as a complete unitary invariant, we use it as one of our main tools in comparing the unitary equivalence class of the weighted Bergman modules. We show that the weighted Bergman kernel $B^{(\lambda)}(z,w) \ne 0$, $z,w\in \mathbb{G}_2$ and as $\mathbb{G}_2$ is simply connected, 
$(B^{(\lambda)})^\nu$, $\nu > 0$, is well-defined on $\mathbb{G}_2$. If $\nu \in \mathcal{W}_{\mathbb{G}_2, a}(B^{(\lambda)})$, then $(\mathbb{A}^{(\lambda, \nu)}, (B^{(\lambda)})^\nu)$ is also a  homogeneous analytic Hilbert module. For different $(\lambda, \nu)$'s, these are not unitary equivalent; see Theorem \ref{inequivofweightedbergker}.  To find homogeneous analytic Hilbert modules on $\mathbb G_2$ other than the ones  that have been already listed, we consider what one might call the ``weighted determinantal Bergman modules'' $(\mathbb{H}^{(\lambda, \nu)}, \det (B^{(\lambda)})^{\nu+2} \mathbb{B}^{(\lambda)})$, where  
$\mathbb{B}^{(\lambda)}$ is the $2\times 2$ curvature matrix of the weighted Bergman kernel $B^{(\lambda)}$ of $\mathbb{G}_2$. It is shown that these are homogeneous analytic Hilbert modules and are inequivalent among themselves. One of the main results of this second half of the paper, namely Theorem \ref{inequivofdetker}, says that in contrast to the transitive case, none of the Hilbert modules $(\mathbb{H}^{(\lambda, \nu)}_{\det}, \det((B^{(\lambda)})^{\nu+2} \mathbb{B}^{(\lambda)}))$ are equivalent to any of the Hilbert modules $(\mathbb{A}^{(\mu, \eta)}, (B^{(\mu)})^\eta)$. 

\subsection{Open Questions} The direct summands $\mathbb{A}_{\rm sym}^{(\mu)}(\mathbb{D}^2)$ consisting of the symmetric functions and $\mathbb{A}_{\rm anti}^{(\mu)}(\mathbb{D}^2)$ consisting of the anti-symmetric functions in the weighted Bergman space $\mathbb{A}^{(\mu)}(\mathbb{D}^2)$ are not equivalent for any $\mu > 0$ as shown in \cite[Proposition 3.13]{BR}. Thus, $\mathcal H^{(\mu)}(\mathbb G_2):= \{f\in \mathcal O(\mathbb G_2): f\circ \boldsymbol{s} \in  \mathbb A_{\rm sym}^{(\mu)}(\mathbb D^2)\}$ provides new examples of $\operatorname{Aut}(\mathbb{G}_2)$-homogeneous Hilbert modules, equivalently, the reproducing kernel of this space is quasi-invariant along with any positive integer power taken point-wise. (Unfortunately, we can't discuss positive real powers since we are unable to determine if this kernel is non-vanishing on $\mathbb{G}_2 \times \mathbb{G}_2$, see Remark \ref{symcomp}.) Nonetheless, repeating the generic algorithm of constructing a new quasi-invariant kernel $\mathbb K^{[\lambda]}$ from a quasi-invariant kernel $K$, we obtain more examples of $\operatorname{Aut}(\mathbb{G}_2)$-homogeneous Hilbert modules. By taking the (point-wise) product of a pair of quasi-invariant kernels $K_1, K_2$, we again get a new quasi-invariant kernel:  $K_1^\nu \big(\mathbb K_2^{[\lambda]}\big)^\mu$ for any admissible choices of  $\lambda, \mu, \nu$. Thus, a plethora of examples is produced by repeating these operations, namely, taking the powers, taking curvature followed by determinant, and then multiplying these. We have shown that the determinant of the weighted Bergman metric and the powers of the weighted Bergman kernel on $\mathbb{G}_2$ give rise to inequivalent Hilbert modules. This verification depends critically on the value of the kernel on the fundamental set. After iterating this process, as described above, we ask if they remain inequivalent. It remains to carry out similar computations involving the reproducing kernel of the Hilbert modules $\mathcal{H}^{(\mu)}$ (and their positive integer powers) obtained from the direct summand $\mathbb{A}_{\rm sym}^{(\mu)}(\mathbb{D}^2)$ to $\mathbb{G}_2$.

On the other hand, Theorem \ref{cor:2.2} provides a recipe to construct all the homogeneous kernels from the projective cocycle $J$ (see Remark \ref{projco}, in particular Equation \eqref{cocycle} ) satisfying
\[J ( g_0, z )^{ - 1 }  \overline{ J ( g_0, z )^{ - 1 }} = 1,\,\, g_0 \in \mathrm{stab}(z), z\in\Lambda ,\]
and a positive function $K_{\Lambda}$ on $\Lambda$. In all the examples described in the previous paragraph, the factor $J$ appearing in Equation \eqref{eqn:1.1} is some function of the Jacobian of the derivative of an automorphism $g$ of $\mathbb{G}_2$ at $\boldsymbol{u} \in \mathbb{G}_2$.  These are easily verified to be cocycles, i.e., they satisfy Equation \eqref{cocycle}. This prompts the question of finding examples of cocycles on $\mathbb{G}_2$ that are not necessarily of this form. A related question is the following. Starting with an assignment of a non-negative definite kernel defined on the fundamental set, say $K_\Lambda(z,z)$, $z\in \Lambda$, if we define $K$ on all of $\mathbb{G}_2 \times \mathbb{G}_2$ by forcing quasi-invariance as in Equation \eqref{eqn:quasi-inv-funda}, then for what choices of $K_\Lambda$, the function $K$ is non-negative definite? 

\section{Curvature criterion for homogeneous modules}

With all our assumptions on $(\mathcal H, K)$ and following the discussion of Section 5 in \cite{AKGM}, it is easy to determine a condition on the reproducing kernel ensuring the homogeneity of the commuting $d$-tuple $\boldsymbol{M}$. We first note that $\boldsymbol{M}_g$ on $\mathcal H$ is unitarily equivalent to the tuple of multiplication operators $ \widetilde{ \boldsymbol{ M } } $ by coordinate functions on $\mathcal H_g:= \{f\circ  g^{-1}: f\in\mathcal H\}$. The inner product on $\mathcal H_g$ is borrowed from $\mathcal H$ by making the map $U^1_g:f\mapsto f\circ g^{-1}$ from $\mathcal H$ to $\mathcal H_g$ unitary that leads to the equivalence. By definition, $\mathcal H_g$ is a reproducing kernel Hilbert space with 
$K_g:\Omega\times \Omega \rightarrow \mathbb C,$
given by the formula: 
\begin{equation} \label{kg}
K_g(z,w) := K(g^{-1}\cdot z, g^{-1}\cdot w) = K(g^{-1}(z), g^{-1}(w)), 
\end{equation}
as the reproducing kernel. An equivalent formulation would be to start with the non-negative definite kernel $K_g$ as defined in \eqref{kg} and to observe that the associated Hilbert space $\mathscr{H}$ obtained using  \cite[Equation (3.1) and (3.2)]{BM} coincides with $\mathcal H_g$. Note that $U^1_g$ is the extension of the mapping $K(\cdot, w)\mapsto K_g(\cdot, g(w))$, and $U^1_g \boldsymbol{M}_h {U^1_g}^* = \boldsymbol{\widetilde M}_{h\circ g^{-1}}$ for $h\in G$. 

So the homogeneity of the tuple $\boldsymbol M$ is the same as the equivalence of $\boldsymbol{\widetilde M}$ on $\mathcal H_g$ and $\boldsymbol{M}$ on $\mathcal H$. Since $\boldsymbol{M}_g$ is a bounded operator on $\mathcal{H}$, $p \circ g \in \mathcal{H}$ for every $p \in \mathbb C [\boldsymbol{z}]$ verifying that $\mathbb C[\boldsymbol{z}]$ is dense in $\mathcal{H}_g$ and consequently, $\mathcal{H}_g$ is an analytic Hilbert module as well and $K_g$ is strictly positive \cite[Lemma 3.6]{CS}. It follows from \cite[Theorem 3.7]{CS}, $\boldsymbol {\widetilde M}$ is unitary equivalent to $\boldsymbol M$ if and only if
\begin{equation}\label{eqn:1.1}
K(g^{-1}(z), g^{-1}(w))= J(g^{-1}, z)^{-1} K(z, w) \overline{J(g^{-1}, w)^{-1}}
\end{equation}
for some non-vanishing function $J$ defined on $G \times \Omega$ holomorphic on $\Omega$ for every fixed $g\in G$. Here, the unitary 
$U^2_g: \mathcal H_g \to \mathcal H$ is given  by: $f\mapsto J(g^{-1}, \cdot) f$, $f\in \mathcal H_g$. This is obtained using the fact that for any bounded holomorphic function $\varphi$ defined on $\Omega$ for which the multiplication operator $M_\varphi$ is bounded on $\mathcal H$, $M_\varphi^* K(\cdot, w) = \overline{\varphi(w)}  K(\cdot, w)$. The function $J$ is defined by the relation ${U^2_g}^* K(\cdot, w) = \overline{J(g^{-1}, w)} K_g(\cdot, w)$. Thus we have the following commutative diagram 
$$
\begin{tikzcd}
  \mathcal H \arrow[r, "U^1_g"] \arrow["\boldsymbol M_g", d]
    & \mathcal H_g \arrow[r, "U^2_g"] \arrow[d, "\boldsymbol{\widetilde M}"] & \mathcal H \arrow[d, "\boldsymbol{M}"]\\
  \mathcal H \arrow[r, "U^1_g"] & \mathcal H_g \arrow[r, "U^2_g"] & \mathcal H
 \end{tikzcd}.
$$
Consequently, the unitary $U_g$ on $\mathcal H$, given by the formula  
\[(U_g f)(z) := (U^2_gU^1_g f) (z) = J (g^{-1}, z )  (f \circ g^{-1}) (z), \, f\in (\mathcal H, K),\, z\in \Omega,\]
has the property: $U_g^* \boldsymbol{M}_g U_g = \boldsymbol{M}$. 

\begin{theorem} \label{thm:2.1-Kquasi}
Let $\Omega\subseteq \mathbb{C}^m$ be a $G$-space, where $G$ is a subgroup of the bi-holomorphic automorphism group of $\Omega$. Let $(\mathcal{H}, K)$
be an analytic Hilbert module over the polynomial ring $\mathbb{C}[\boldsymbol{z}]$ consisting of holomorphic functions defined on $\Omega$. The commuting tuple $\boldsymbol{M}$ of multiplication by the coordinate functions $M_i$, $1\leq i \leq m$, on $(\mathcal{H}, K)$ is homogeneous under the group $G$ if and only if 
\[K(g (z), g (w))= {J(g, z)}^{-1} K(z, w) \overline{J(g, w)^{-1}},\, z,w\in \Omega,\]
for all $g\in G$.
\end{theorem}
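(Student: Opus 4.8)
The plan is to translate the $G$-homogeneity of $\boldsymbol{M}$ into a statement purely about the reproducing kernel $K$, using the two-step factorization of the intertwining unitary worked out above together with the rigidity theorem for analytic Hilbert modules in \cite[Theorem 3.7]{CS}. The heart of the matter is that the condition $U_g^* \boldsymbol{M}_g U_g = \boldsymbol{M}$ is equivalent to the unitary equivalence of the coordinate-multiplication tuple $\widetilde{\boldsymbol{M}}$ on the transported space $\mathcal{H}_g$ with $\boldsymbol{M}$ on $\mathcal{H}$, and such an equivalence of analytic Hilbert modules is controlled entirely by a change of scale of the two kernels.

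First I would fix $g \in G$ and record that $\boldsymbol{M}_g$ on $\mathcal{H}$ is unitarily equivalent, via the composition operator $U_g^1 \colon f \mapsto f \circ g^{-1}$, to the coordinate tuple $\widetilde{\boldsymbol{M}}$ on $(\mathcal{H}_g, K_g)$, where $K_g(z,w) = K(g^{-1}(z), g^{-1}(w))$ as in \eqref{kg}. Thus $\boldsymbol{M}$ is $G$-homogeneous precisely when, for every $g \in G$, the coordinate tuple on $(\mathcal{H}_g, K_g)$ is unitarily equivalent to the coordinate tuple $\boldsymbol{M}$ on $(\mathcal{H}, K)$.

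Next I would check that $(\mathcal{H}_g, K_g)$ is itself an analytic Hilbert module, so that the rigidity theorem applies to both modules at once. Since $\boldsymbol{M}_g$ is bounded on $\mathcal{H}$, one has $p \circ g \in \mathcal{H}$ for every polynomial $p$, whence $\mathbb{C}[\boldsymbol{z}]$ is dense in $\mathcal{H}_g$ and $K_g$ is strictly positive by \cite[Lemma 3.6]{CS}; therefore $(\mathcal{H}_g, K_g) \in \operatorname{HM}_a(\Omega)$. With both modules in $\operatorname{HM}_a(\Omega)$, \cite[Theorem 3.7]{CS} identifies their unitary equivalence with the existence of a non-vanishing holomorphic multiplier $J(g^{-1}, \cdot)$ satisfying
\begin{equation*}
K(g^{-1}(z), g^{-1}(w)) = J(g^{-1}, z)^{-1} K(z, w) \overline{J(g^{-1}, w)^{-1}},
\end{equation*}
which is precisely \eqref{eqn:1.1}. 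Finally, since $G$ is a group, demanding this identity for all $g \in G$ is the same as demanding it with $g^{-1}$ replaced by $g$; relabeling $h = g^{-1}$ and letting $h$ run over $G$ yields the stated quasi-invariance
\begin{equation*}
K(g(z), g(w)) = J(g, z)^{-1} K(z, w) \overline{J(g, w)^{-1}}, \qquad g \in G,
\end{equation*}
and reversing the chain of equivalences gives the converse implication.

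I expect the principal obstacle to lie in the third step, the appeal to \cite[Theorem 3.7]{CS}: the assertion that unitary equivalence of the two coordinate tuples forces their kernels to agree up to a holomorphic change of scale. The underlying mechanism is that polynomial density gives $\dim \bigl( \bigcap_{i=1}^m \ker (M_i - w_i)^* \bigr) = 1$, so each module carries a genuine rank-one eigenbundle with holomorphic frame $w \mapsto K(\cdot, w)$; any unitary intertwiner of the tuples must carry one frame to a non-vanishing scalar multiple of the other, and tracking this scalar over $\Omega$ produces the multiplier $J$ and the change-of-scale formula \eqref{changeofscale}. The secondary point needing care is the second step, namely confirming $(\mathcal{H}_g, K_g) \in \operatorname{HM}_a(\Omega)$, though this is routine once the boundedness of $\boldsymbol{M}_g$ is invoked.
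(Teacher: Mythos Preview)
Your proposal is correct and follows essentially the same route as the paper: the argument in the paper is exactly the discussion preceding the theorem statement, which transports $\boldsymbol{M}_g$ to the coordinate tuple on $(\mathcal{H}_g,K_g)$ via $U_g^1$, verifies $(\mathcal{H}_g,K_g)\in\operatorname{HM}_a(\Omega)$, and then invokes \cite[Theorem 3.7]{CS} to obtain the change-of-scale multiplier $J$. Your elaboration on the rank-one eigenbundle mechanism behind the Curto--Salinas result is a helpful gloss but not an alternative approach.
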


\begin{remark}\label{projco}

Suppose that $G$ is a locally compact second countable group. Then a Borel function $m:G\times G \to \mathbb{T}$ is called a multiplier if 
\[m(g, 1) = m(1, g) = 1,\,\,m(g_1, g_2)m(g_1 g_2, g_3) = m(g_1, g_2g_3) m(g_2, g_3)\] 
for all $g, g_1, g_2, g_3 \in G$.

Let $X$ be a $G$-space and assume that the $G$ action is Borel. Suppose that $J:G\times X \to \mathbb{C}\setminus \{0\}$ is a {projective cocycle}, namely,  
\begin{equation} \label{cocycle}
J( g_1 g_2, x) = m(g_1,g_2) J(g_1, g_2 \cdot x) J(g_2, x), ~~~ g_1, g_2\in G, \, x \in X.
\end{equation}
If $J$ satisfies Equation \eqref{cocycle} with $m(g_1,g_2) = 1$ for all $g_1, g_2$ in $G$, then it is called a cocycle.  

Let $\mathcal{F}$ be the space of all complex valued Borel functions defined on $X$ and let $\mathcal{L}(\mathcal{F})$ be the algebra of all the linear transformations on $\mathcal{F}$. If $J$ is a projective cocycle, then the map $U:G \to  \mathcal{L}(\mathcal{F})$ given by the formula
\begin{equation}\label{eqn:proj rep}
(U_g f) (x) = J(g^{-1}, x) f(g^{-1}(x)),\,\, g\in G,x\in X,     
\end{equation}
is a projective homomorphism, that is, 
\[U_{g_1g_2} = m(g_1, g_2) U_{g_1} U_{g_2},\,\, g_1,g_2 \in G.\]
In particular, if $\mathcal{H}$ is a Hilbert space consisting of functions on $X$, then a projective homomorphism $U : G \to \mathcal{U}(\mathcal{H})$, defined by Equation \eqref{eqn:proj rep}, is said to be a projective multiplier representation.
\end{remark} 

Any commuting tuple of operators in the Cowen-Douglas class of rank $1$ is irreducible \cite[Proposition 1.18]{CD}. We show that analytic Hilbert modules possess this property as well.  
\begin{lemma}\label{irreducibility}
Let $(\mathcal{H}, K)$ be an analytic Hilbert module. Then the commuting tuple $\boldsymbol{M} = (M_1, \ldots, M_n)$ of the multiplication by the coordinate function is irreducible.    
\end{lemma}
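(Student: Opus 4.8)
The plan is to show that any orthogonal projection $P$ on $\mathcal{H}$ commuting with each $M_i$ must be either $0$ or the identity; this is precisely irreducibility of the tuple $\boldsymbol{M}$. Since $P = P^*$, commuting with every $M_i$ forces $P$ to commute with every $M_i^*$ as well, so both $\operatorname{ran} P$ and $\ker P$ are invariant under the adjoint tuple $(M_1^*, \ldots, M_m^*)$.

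The key structural input is the identity $M_i^* K(\cdot, w) = \overline{w_i}\, K(\cdot, w)$, which says that $K(\cdot, w)$ is a joint eigenvector of the adjoint tuple with eigenvalue $\overline{w}$, together with the one-dimensionality $\dim\big(\bigcap_{i=1}^m \ker(M_i - w_i)^*\big) = 1$ coming from polynomial density. First I would compute, for fixed $w \in \Omega$,
\[
M_i^*\big(P K(\cdot, w)\big) = P M_i^* K(\cdot, w) = \overline{w_i}\, P K(\cdot, w),
\]
so that $P K(\cdot, w)$ again lies in the one-dimensional joint eigenspace $\bigcap_i \ker(M_i - w_i)^*$. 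Hence there is a scalar $\lambda(w)$ with $P K(\cdot, w) = \lambda(w) K(\cdot, w)$, and since $P^2 = P$ and $K(\cdot, w) \neq 0$, necessarily $\lambda(w) \in \{0, 1\}$.

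Next I would argue that $w \mapsto \lambda(w)$ is constant. Taking the inner product with $K(\cdot, w)$ gives $\lambda(w) = \langle P K(\cdot, w), K(\cdot, w)\rangle / K(w, w)$, where $K(w,w) > 0$. The denominator is real-analytic and positive, while the numerator is continuous because $\overbar{w} \mapsto K(\cdot, w)$ is anti-holomorphic, hence norm continuous; thus $\lambda$ is a continuous $\{0,1\}$-valued function on the connected set $\Omega$ and must be constant. If $\lambda \equiv 1$ then $P$ fixes every $K(\cdot, w)$, and if $\lambda \equiv 0$ then $P$ annihilates every $K(\cdot, w)$; since the span of $\{K(\cdot, w) : w \in \Omega\}$ is dense in $\mathcal{H}$, we conclude $P = I$ or $P = 0$, respectively, which is what we wanted.

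The only place demanding any care is the constancy step: one must verify that $\lambda$ is genuinely continuous, so that its $\{0,1\}$-valuedness together with connectedness of $\Omega$ forces it to be globally constant rather than merely locally constant. Everything else --- the eigenvector identity for $\boldsymbol{M}^*$, the one-dimensionality of the joint kernel, the strict positivity of $K(w,w)$, and the density of the kernel sections --- is supplied directly by the analytic Hilbert module hypotheses recorded earlier in the excerpt.
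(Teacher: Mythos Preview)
Your proof is correct and rests on the same two structural facts as the paper's --- that each $K(\cdot,w)$ spans the one-dimensional joint eigenspace $\bigcap_i \ker(M_i-w_i)^*$, and that the kernel sections are total --- but the execution differs. The paper argues by contradiction: if $\mathcal{M}$ is a nontrivial reducing subspace with reproducing kernel $K_{\mathcal{M}}$, then at some $w$ both $K_{\mathcal{M}}(\cdot,w)$ and $K_{\mathcal{M}^\perp}(\cdot,w)$ are nonzero joint eigenvectors, forcing the eigenspace to have dimension at least $2$. Your route instead shows directly that a commuting projection $P$ satisfies $PK(\cdot,w)=\lambda(w)K(\cdot,w)$ with $\lambda(w)\in\{0,1\}$, and then invokes continuity of $\lambda$ and connectedness of $\Omega$ to force $\lambda$ constant. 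Your version is a bit more explicit about where connectedness enters (the paper's choice of $w$ with both projections nonzero tacitly needs a similar argument), while the paper's version is terser; both are standard and equally valid.
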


\begin{proof}
Since the tuple $\boldsymbol{M} = (M_1, \ldots, M_n)$ is irreducible if and only if $\boldsymbol{M}^* = (M_1^*, \ldots, M_n^*)$ is irreducible, it is enough to prove that $\boldsymbol{M}^*$ is irreducible. On the contrary, assume that $\boldsymbol{M}^*$ is not irreducible. Let $\mathcal{M}$ be a proper non-trivial joint reducing subspace of each $M_i^*$, $1 \leq i \leq n$. Suppose $K_{\mathcal{M}}$ and $K_{\mathcal{M}^\perp}$ are the reproducing kernels of $\mathcal{M}$ and $\mathcal{M}^\perp$, respectively.  

Fix $w \in \Omega$ such that $K_{\mathcal{M}}(\cdot, w)$ and $K_{\mathcal{M^\perp}}(\cdot, w)$ both are non-zero elements of $\mathcal{H}$. Note that $M_{i}^* K_{\mathcal{M}} (\cdot, w) = w_i K_{\mathcal{M}} (\cdot, w)$ and $M_{i}^* K_{\mathcal{M}^\perp} (\cdot, w) = w_i K_{\mathcal{M}^\perp} (\cdot, w)$ for each $i$. This implies that 
$$\dim \big (\bigcap_{i=1}^m \ker (M_i - w_i)^*\big ) \geq 2,$$
which is a contradiction.
\end{proof}
In general, if a homogeneous commuting tuple $\boldsymbol{M}$ is irreducible, then the unitary, say $U_g$, inducing the unitary equivalence between $\boldsymbol{M}$ and $\boldsymbol{M}_g$ can be chosen so that the map $g\mapsto U_g$ is a homomorphism. If $(\mathcal{H}, K)$ is a homogeneous analytic Hilbert module, then we prove below that this homomorphism must be a projective multiplier representation. 
\begin{prop}\label{existence of rep}
Suppose that $(\mathcal{H}, K)$ is an analytic Hilbert module,  $\boldsymbol{M}$ is the commuting tuple of multiplication by the coordinate functions and $G$ is a locally compact second countable group. Then $(\mathcal{H}, K)$ is $G$-homogeneous if and only if there exists a projective multiplier representation $U : G \to \mathcal{U}(\mathcal{H})$ such that $\boldsymbol{M}_g = U(g)^* \boldsymbol{M}U(g)$, $g \in G$        
\end{prop}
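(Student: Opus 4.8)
The plan is to prove both implications, with the forward direction being essentially a restatement of the definitions and the reverse direction carrying all the content. For the ``if'' part, suppose a projective multiplier representation $U\colon G\to\mathcal U(\mathcal H)$ exists with $\boldsymbol M_g=U(g)^*\boldsymbol M U(g)$. Then for each fixed $g$ the single unitary $U(g)$ already witnesses the unitary equivalence of $\boldsymbol M_g$ and $\boldsymbol M$, which is precisely the definition of $G$-homogeneity; nothing further is needed here.

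For the ``only if'' part, I would begin from Theorem \ref{thm:2.1-Kquasi}: homogeneity of $\boldsymbol M$ is equivalent to quasi-invariance of the kernel, so there is a function $J$ on $G\times\Omega$, non-vanishing and holomorphic in the second variable for each fixed $g$, with $K(g(z),g(w))=J(g,z)^{-1}K(z,w)\overline{J(g,w)^{-1}}$ as in \eqref{eqn:1.1}. Using this $J$ I define $U_g$ by the formula \eqref{eqn:proj rep}, namely $(U_gf)(z)=J(g^{-1},z)f(g^{-1}(z))$; the computation preceding Theorem \ref{thm:2.1-Kquasi} already shows that each $U_g$ is a well-defined unitary on $\mathcal H$ intertwining $\boldsymbol M$ and $\boldsymbol M_g$ (after matching the conjugation convention by passing between $g$ and $g^{-1}$). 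By Remark \ref{projco}, to conclude that $g\mapsto U_g$ is a projective multiplier representation it then suffices to show that $J$ is a projective cocycle, i.e.\ that it satisfies \eqref{cocycle} for some multiplier $m$.

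The heart of the argument is therefore to verify the cocycle identity for $J$. I would compute $K\big((g_1g_2)(z),(g_1g_2)(w)\big)$ in two ways: once by applying quasi-invariance directly for $g_1g_2$, and once by writing $(g_1g_2)(z)=g_1(g_2(z))$ (using $\alpha_{g_1g_2}=\alpha_{g_1}\circ\alpha_{g_2}$) and applying quasi-invariance first for $g_1$ at the points $g_2(z),g_2(w)$ and then for $g_2$. Comparing the two expressions and setting
\[m(g_1,g_2)(z):=\frac{J(g_1g_2,z)}{J(g_1,g_2(z))\,J(g_2,z)},\]
one finds $K(z,w)=m(g_1,g_2)(z)\,K(z,w)\,\overline{m(g_1,g_2)(w)}$ for all $z,w\in\Omega$. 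Evaluating at $z=w$ and using $K(z,z)>0$ (which holds since the constants lie in $\mathcal H$) forces $|m(g_1,g_2)(z)|=1$; as $m(g_1,g_2)(\cdot)$ is holomorphic on the connected domain $\Omega$, it must be a unimodular constant. This yields \eqref{cocycle} with a multiplier $m\colon G\times G\to\mathbb T$, whose $2$-cocycle identity and normalization $m(g,1)=m(1,g)=1$ follow from associativity of the $G$-action together with the normalization $J(1,\cdot)\equiv 1$ (itself forced by quasi-invariance at $g=1$).

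The main obstacle I anticipate is not the algebra above but the measure-theoretic bookkeeping needed to land genuinely in the class of projective \emph{multiplier} representations: one must produce a choice $g\mapsto J(g,\cdot)$, hence $g\mapsto U_g$, that is Borel, so that $m$ is a Borel multiplier in the sense of Remark \ref{projco}. This is exactly where the hypothesis that $G$ is locally compact and second countable enters, through a measurable selection of the implementing unitaries. Here the irreducibility of $\boldsymbol M$ from Lemma \ref{irreducibility} does the decisive work: by a Schur-type argument any two unitaries implementing $\boldsymbol M_g=U^*\boldsymbol M U$ differ by a unimodular scalar (their quotient commutes with $\boldsymbol M_g$ and $\boldsymbol M_g^*$, hence is scalar), so the assignment $g\mapsto U_g$ is canonical modulo $\mathbb T$ and the scalar ambiguities are precisely what is absorbed into $m$; a standard measurable-selection argument on the second countable group $G$ then upgrades this to a Borel, hence projective multiplier, representation. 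Assembling these pieces and invoking Remark \ref{projco} completes the reverse implication.
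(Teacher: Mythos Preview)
Your proof is correct and reaches the same conclusion, but the route differs from the paper's. The paper argues the substantive direction in two strokes: first, Lemma~\ref{irreducibility} gives irreducibility of $\boldsymbol M$, and then an external result (\cite[Theorem~1.6]{MS}) is invoked as a black box to produce a projective unitary representation $U$ with $\boldsymbol M_g=U(g)^*\boldsymbol M U(g)$; only afterwards does the paper observe, via the discussion preceding Theorem~\ref{thm:2.1-Kquasi}, that this $U$ must be of multiplier form. You instead work constructively: starting from the quasi-invariance of $K$ you build the multiplier $J$ and the unitaries $U_g$ explicitly, verify the projective cocycle identity for $J$ by hand (the two-ways computation of $K((g_1g_2)z,(g_1g_2)w)$ and the holomorphic unimodular argument), and only then appeal to irreducibility and a measurable-selection step to secure Borel dependence on $g$.

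What each approach buys: the paper's route is shorter and offloads the delicate measurability and cocycle bookkeeping to \cite{MS}, but gives less insight into why the representation is a \emph{multiplier} one. Your route makes the multiplier structure and the cocycle identity completely transparent, at the cost of having to confront the Borel-selection issue yourself; your invocation of a ``standard measurable-selection argument'' is exactly what \cite[Theorem~1.6]{MS} packages, so in the end both proofs rest on the same ingredients (irreducibility plus a selection principle on a locally compact second countable group), organized in opposite orders. One small normalization point: quasi-invariance at $g=1$ only forces $|J(1,\cdot)|\equiv 1$, hence $J(1,\cdot)$ is a unimodular constant rather than literally $1$; you may of course normalize it to $1$, and this does not affect the argument.
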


\begin{proof}
Since $(\mathcal{H}, K)$ is an analytic Hilbert module, from Lemma \ref{irreducibility} it follows that  $\boldsymbol{M}$ is irreducible. If $(\mathcal{H}, K)$ is $G$-homogeneous, then the existence of a projective unitary representation $U : G \to \mathcal{U}(\mathcal{H})$ satisfying $\boldsymbol{M}_g = U(g)^* \boldsymbol{M}U(g)$, $g \in G$ follows from \cite[Theorem 1.6]{MS}. The discussion preceding Theorem \ref{thm:2.1-Kquasi} shows that $U$ must be a multiplier representation.   
\end{proof}

For $ w \in \Omega $, there exists unique point $ z \in \Lambda $ and a biholomorphism $ g_{ z, w } \in G $ depending on $ z $ and $ w $, such that $ g_{ z, w } ( z ) = w $. Consequently, if $K$ is quasi-invariant under $G$, then it follows from Equation \eqref{eqn:1.1} that
\begin{equation} \label{eqn:quasi-inv-funda}
    K ( w, w ) = J ( g_{ z, w }, z )^{ - 1 } K ( z , z ) \overline{ J ( g_{ z, w }, z )^{ - 1 } } .
\end{equation}  
Also, note that if $z \in \Lambda$ and $g_0$ is in the stabilizer subgroup stab$(z)$ fixing $z$, then it follows from Equation \eqref{eqn:1.1} that
\begin{equation*}
    K ( z, z ) = J ( g_0, z )^{ - 1 } K ( z , z ) \overline{ J ( g_{ 0 }, z )^{ - 1 } } .
\end{equation*}
Moreover,  the restriction of $ K $ and $ J $ to $ \Lambda $ are enough to determine the value $K( w, w ) $, $w\in \Omega$, as shown below.

Assume that $K_\Lambda : \Lambda \to \mathbb C$ is any positive function and $J : G \times \Omega \to \mathbb C$ is a projective cocycle such that
\begin{equation}\label{stab relation}
    J ( g_0, z )^{ - 1 } \overline{ J ( g_0, z )^{ - 1 } } = 1,
\end{equation}
for every $z \in \Lambda$ and $g_0 \in \mbox{stab}(z)$. Suppose that $K_{\Omega} : \Omega \to \mathbb C$ is determined by setting    
\begin{equation}\label{def of K_Omega}
    K_\Omega ( w ) = J ( g_{ z, w }, z )^{ - 1 } K_\Lambda ( z ) \overline{ J ( g_{ z, w}, z )^{ - 1 } }, \,\,w  \in \Omega,\,\,z \in \Lambda,
\end{equation}
where $g_{ z, w } \in G$ such that $g_{z,w} (z) = w.$ We claim that  $K_{\Omega}$ given by Equation \eqref{def of K_Omega} is well-defined. In fact, if $\tilde{g} \in G$ is another element of $G$ mapping $z$ to $w$, then $\tilde{g}^{-1} \circ g_{z, w} \in \mbox{stab} (z)$ and therefore, 
\begin{align*}
J ( g_{ z, w }, z )^{ - 1 } K_\Lambda ( z ) \overline{ J ( g_{ z, w}, z )^{ - 1 } }
&= J ( \tilde{g} \circ \tilde{g}^{-1} \circ g_{ z, w }, z )^{ - 1 } K_\Lambda ( z ) \overline{ J ( \tilde{g} \circ \tilde{g}^{-1} \circ g_{ z, w}, z )^{ - 1 } }\\
&= J ( \tilde{g} , z )^{ - 1 } J ( \tilde{g}^{-1} \circ g_{ z, w }, z )^{ - 1 } K_\Lambda ( z ) \overline{ J (\tilde{g}^{-1} \circ g_{ z, w}, z )^{ - 1 }} \overline{ J ( \tilde{g}, z )^{ - 1 } }\\
&= J ( \tilde{g} , z )^{ - 1 }  K_\Lambda ( z )  \overline{ J ( \tilde{g}, z )^{ - 1 } }
\end{align*}
Here, the second equality holds because $J$ is a projective cocycle and the third equality holds because of Equation \eqref{stab relation}. Let us assume $K_\Omega$ to be a real analytic function and there exist a non-negative definite kernel  $K : \Omega \times \Omega \to \mathbb C$, holomorphic in the first variable and anti-holomorphic in the second variable,  such that $K(w,w) := K_{\Omega}(w),\,\,w \in \Omega$. Note that if $g \in G$ and $w \in \Omega$, then
\begin{flalign*}
K(w, w) &= J ( g_{ z, w }, z )^{ - 1 } K ( z, z ) \overline{ J ( g_{ z, w}, z )^{ - 1 } }\\
&= J ( g \circ g^{-1} \circ g_{ z, w }, z )^{ - 1 } K ( z, z ) \overline{ J ( g \circ g^{-1} \circ g_{ z, w}, z )^{ - 1 } }\\
&= J ( g , g^{-1}(w) )^{ - 1 } J ( g^{-1} \circ g_{ z, w }, z )^{ - 1 } K ( z, z ) \overline{ J (g^{-1} \circ g_{ z, w}, z )^{ - 1 }} \overline{ J ( g, g^{-1}(w) )^{ - 1 } }\\
&= J ( g , g^{-1}(w) )^{ - 1 }  K ( g^{-1}(w), g^{-1}(w) )  \overline{ J ( g, g^{-1}(w) )^{ - 1 } }.
\end{flalign*}
 Since $K_\Omega$ is well-defined, the validity of the fourth equality is evident. Let 
 $$\phi(z, w) = K(z, w) - J ( g , g^{-1}(z) )^{ - 1 }  K ( g^{-1}(z), g^{-1}(w) )  \overline{ J ( g, g^{-1}(w) )^{ - 1 } }.$$
Since $\phi$ is holomorphic in the first variable and anti-holomorphic in the second variable and $\phi(w, w) = 0$, from \cite[Proposition 1, page 10]{dang}, it follows that $\phi(z, w) = 0$. This shows that $K$ is quasi-invariant function with respect to $J$. We summarize all these observations in the theorem stated below. 

\begin{theorem} \label{cor:2.2}
Suppose that $(\mathcal{H}, K)$ is an analytic Hilbert module. Then $(\mathcal{H},K)$ is $G$-homogeneous if and only if
\[ K ( w, w ) = J ( g_{ z, w }, z )^{ - 1 } K ( z , z ) \overline{ J ( g_{ z, w }, z )^{ - 1 } }, z\in \Lambda,\, w\in \Omega, \]
where $g_{ z, w } \in G$ such that $w = g_{ z, w }(z)$, for  a projective cocycle $J : G \times \Omega \to \mathbb C$ that is holomorphic for any fixed $g\in G$ and  
\[J ( g_0, z )^{ - 1 }  \overline{ J ( g_0, z )^{ - 1 }} = 1,\,\, g_0 \in \mathrm{stab}(z) .\]
\end{theorem}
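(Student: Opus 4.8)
The plan is to read both implications off the discussion preceding the statement, using Theorem \ref{thm:2.1-Kquasi} as the pivot that converts $G$-homogeneity into the two-variable quasi-invariance relation \eqref{eqn:1.1} for $K$.

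For the forward implication, I would assume $(\mathcal{H}, K)$ is $G$-homogeneous and invoke Proposition \ref{existence of rep} to obtain a projective multiplier representation $U$ implementing $\boldsymbol{M}_g = U(g)^*\boldsymbol{M}U(g)$. The multiplier $J$ attached to $U$ through \eqref{eqn:proj rep} is a projective cocycle, the cocycle identity \eqref{cocycle} being exactly the projective-homomorphism property of $U$, and $J$ is holomorphic in the space variable for each fixed $g$. Theorem \ref{thm:2.1-Kquasi} gives the full quasi-invariance \eqref{eqn:1.1}; specializing $g$ to an element $g_{z,w}$ with $z\in\Lambda$ and $g_{z,w}(z)=w$ produces the displayed fundamental-set identity \eqref{eqn:quasi-inv-funda}, while specializing to $g_0\in\mathrm{stab}(z)$ yields $K(z,z)=J(g_0,z)^{-1}K(z,z)\overline{J(g_0,z)^{-1}}$. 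Since $K$ is strictly positive on an analytic Hilbert module, $K(z,z)>0$, and dividing it out gives the stabilizer relation \eqref{stab relation}.

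For the reverse implication, I would take the given projective cocycle $J$ and the fundamental-set identity and aim to recover \eqref{eqn:1.1} in full, after which Theorem \ref{thm:2.1-Kquasi} closes the argument. Fix $g\in G$ and $w\in\Omega$, and let $z\in\Lambda$ with $g_{z,w}(z)=w$. Writing $g_{z,w}=g\circ(g^{-1}\circ g_{z,w})$ and expanding with the cocycle identity \eqref{cocycle}, together with the observation that $g^{-1}\circ g_{z,w}$ carries $z\in\Lambda$ to $g^{-1}(w)$ so that the fundamental-set identity applies at $g^{-1}(w)$, transforms \eqref{eqn:quasi-inv-funda} into
\[K(w,w) = J(g,g^{-1}(w))^{-1}\,K(g^{-1}(w),g^{-1}(w))\,\overline{J(g,g^{-1}(w))^{-1}},\]
which is the displayed computation preceding the statement; the stabilizer relation \eqref{stab relation} is what renders the intermediate step independent of the choice of $g_{z,w}$. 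To upgrade this diagonal equality to the off-diagonal relation, I would set
\[\phi(z,w) = K(z,w) - J(g,g^{-1}(z))^{-1}\,K(g^{-1}(z),g^{-1}(w))\,\overline{J(g,g^{-1}(w))^{-1}},\]
note that $\phi$ is holomorphic in $z$ and anti-holomorphic in $w$ with $\phi(w,w)=0$ for all $w$, and apply the rigidity result \cite[Proposition 1, page 10]{dang} to conclude $\phi\equiv 0$. This is precisely \eqref{eqn:1.1}, so $(\mathcal{H},K)$ is $G$-homogeneous.

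I expect the passage from the diagonal to the off-diagonal to be the substantive point. The hypotheses pin down $K$ only on the diagonal $\{(w,w):w\in\Omega\}$ through the fundamental-set identity, whereas homogeneity via Theorem \ref{thm:2.1-Kquasi} requires the genuinely two-variable relation \eqref{eqn:1.1}; the analytic-continuation lemma of \cite{dang} is the tool that bridges this gap, and the care needed is to verify that $\phi$ is indeed holomorphic in the first variable and anti-holomorphic in the second so that the lemma applies.
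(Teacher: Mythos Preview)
Your proposal is correct and follows essentially the same route as the paper: the forward direction specializes the quasi-invariance relation \eqref{eqn:1.1} to $g_{z,w}$ and to $g_0\in\mathrm{stab}(z)$, and the reverse direction uses the cocycle identity to pass from the fundamental-set relation to the full diagonal relation, then the analytic-continuation lemma from \cite{dang} to polarize off the diagonal. Your explicit appeal to Proposition~\ref{existence of rep} to secure the projective-cocycle structure on $J$ is a slight sharpening of what the paper leaves implicit in the discussion preceding the statement.
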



If $(\mathcal{H}, K)$ is an analytic Hilbert module, then it turns out that 
$\dim \cap_{ i = 1 }^m ( M_{ z_i }^* - \overline{ w_i } I ) = 1 $, $w\in \Omega$. Thus, the holomorphic function $ \bar{ w } \mapsto K ( \cdot, w ) $ gives rise to a structure of holomorphic vector bundle on 
$$ \{ ( \bar{ w }, f ) : f \in \cap_{ i = 1 }^m ( M_{ z_i }^* - \overline{ w_i }\text{I} ) \} \subset \Omega^* \times \mathcal{ H }, $$
making it a Hermitian holomorphic line bundle -- denoted by $ \mathsf{ L }_\mathcal{ H } $ -- over $ \Omega^* : = \{ w : \bar{ w } \in \Omega \} $ with the Hermitian structure induced from $ \mathcal{ H } $. 

We recall below the fundamental theorem of Cowen and Douglas in the language of Hilbert modules over the polynomial ring $\mathbb{C}[\boldsymbol{z}]$. The original theorem was for an operator $T$ in the Cowen-Douglas class $B_1(\Omega)$. When $\Omega\subset \mathbb{C}^m$, $m>1$, the module action is induced by the $m$-tuple $\boldsymbol{M}$ of multiplication by coordinate functions. The relationship between commuting $m$-tuple $\boldsymbol{M}$ and the curvature of the associated line bundle is now more complicated, but the result is still the same, see \cite{Bolyi}. The theorem stated below is a variant of a theorem due to Chen and Douglas that appeared in  \cite{ChenDouglas}. A proof for this version can be easily made up by adapting the original proof in \cite{ChenDouglas}. Hence, it is omitted. However, we point out that here, the Hilbert modules are assumed to be analytic, while in the paper \cite{ChenDouglas}, the Hilbert modules are assumed to be locally free. Nonetheless, these notions are closely related and do not affect the result below.  

\begin{theorem}[Cowen-Douglas] \label{module equiv}
Two analytic Hilbert modules $ \mathcal{ H }_1 $ and $ \mathcal{ H }_2 $ are unitarily equivalent if and only if the respective Hermitian holomorphic line bundles $ \mathsf{ L }_{\mathcal{ H }_1} $ and $ \mathsf{ L }_{\mathcal{ H }_2 } $ are isomorphic as Hermitian holomorphic line bundles. Moreover, the equivalence of the line bundles is completely determined by the curvature matrix $\mathbb{K}$.
\end{theorem}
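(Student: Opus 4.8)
The plan is to prove the two equivalences packaged in the statement — unitary module equivalence $\Leftrightarrow$ isomorphism of the Hermitian holomorphic line bundles, and isomorphism of the bundles $\Leftrightarrow$ equality of the curvature matrices $\mathbb{K}$ — by assembling the building blocks recalled in the introduction: the one-dimensionality of the joint kernels $\cap_{i=1}^m \ker(M_i - w_i)^*$, the change-of-scale relation \eqref{changeofscale}, and the polarization principle of \cite{dang} already exploited in the proof of Theorem~\ref{cor:2.2}.

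First I would establish module equivalence $\Leftrightarrow$ bundle isomorphism. For the forward direction, let $U:\mathcal{H}_1 \to \mathcal{H}_2$ be a unitary intertwining the coordinate multiplications, so $U M_i^{(1)} = M_i^{(2)} U$ and hence $U M_i^{(1)*} = M_i^{(2)*} U$ for each $i$. Then $U$ carries $\cap_{i=1}^m \ker(M_i^{(1)} - w_i)^*$ isometrically onto $\cap_{i=1}^m \ker(M_i^{(2)} - w_i)^*$ for every $w \in \Omega$; since these one-dimensional spaces are precisely the fibres of $\mathsf{L}_{\mathcal{H}_1}$ and $\mathsf{L}_{\mathcal{H}_2}$ over $\bar{w}$, the map $U$ induces a fibrewise isometric isomorphism, and writing $U K_1(\cdot, w) = \overline{\varphi(w)}\, K_2(\cdot, w)$ for a nowhere-vanishing $\varphi$, the anti-holomorphicity of $\bar w \mapsto K_j(\cdot, w)$ forces $\varphi$ to be holomorphic, so $U$ is an isomorphism of Hermitian holomorphic line bundles over $\Omega^*$. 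For the reverse direction, a bundle isomorphism relates the two holomorphic frames by a nowhere-vanishing holomorphic multiplier, i.e. gives the change of scale $K_1(z,w) = \varphi(z)\, K_2(z,w)\, \overline{\varphi(w)}$ of \eqref{changeofscale}; defining $V:\mathcal{H}_2 \to \mathcal{H}_1$ on the dense span of kernel functions by $V\big(K_2(\cdot, w)\big) = \overline{\varphi(w)}^{-1} K_1(\cdot, w)$, the change-of-scale identity shows $V$ preserves inner products and intertwines the coordinate multiplications. This is exactly the assertion, recalled in the introduction, that two analytic Hilbert modules are unitarily equivalent if and only if their kernels differ by a non-vanishing change of scale.

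It then remains to identify the curvature matrix $\mathbb{K}$ of \eqref{eqn:1.4} as the complete invariant for the bundle. One direction is immediate: an isometric holomorphic bundle isomorphism preserves the Chern connection and hence its curvature $(1,1)$-form \eqref{curvaturedefn}, so $\mathbb{K}_1 = \mathbb{K}_2$. For the converse, suppose $\mathbb{K}_1 = \mathbb{K}_2$, that is $\partial_i \overline{\partial}_j \big(\log K_1(w,w) - \log K_2(w,w)\big) = 0$ for all $i,j$. Then $u := \log K_1(w,w) - \log K_2(w,w)$ is pluriharmonic, and since $\Omega$ is simply connected it is the real part of a holomorphic function; writing $e^{u} = |\varphi|^2$ for a nowhere-vanishing holomorphic $\varphi$ yields $K_1(w,w) = \varphi(w) K_2(w,w)\overline{\varphi(w)}$ on the diagonal. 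The difference $\phi(z,w) := K_1(z,w) - \varphi(z) K_2(z,w)\overline{\varphi(w)}$ is holomorphic in $z$, anti-holomorphic in $w$, and vanishes on the diagonal, so the polarization principle \cite{dang} forces $\phi \equiv 0$; this is the change of scale, hence the bundle isomorphism, closing the loop.

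The step I expect to be the main obstacle is precisely the passage from equality of curvatures to a genuine change of scale. Two points need care: recovering a globally defined, single-valued, nowhere-vanishing holomorphic $\varphi$ from the pluriharmonic difference $u$ — where simple-connectedness of $\Omega$ is essential both to kill the period obstruction to globalizing the local conjugate and to choose a branch of the exponential — and upgrading the diagonal identity $K_1(w,w) = |\varphi(w)|^2 K_2(w,w)$ to the full off-diagonal relation, which is exactly where the polarization argument of \cite{dang} (as in the proof of Theorem~\ref{cor:2.2}) is indispensable.
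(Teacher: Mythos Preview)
The paper does not give its own proof of this theorem: it records the statement as a variant of the Chen--Douglas result \cite{ChenDouglas} and explicitly omits the argument. So there is no in-paper proof to compare against, only the indicated source.

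Your argument is sound and well-organized, but it silently imports an extra hypothesis. In the passage from $\mathbb{K}_1=\mathbb{K}_2$ to a global change of scale you use that the pluriharmonic function $u=\log K_1(w,w)-\log K_2(w,w)$ is the real part of a \emph{global} holomorphic function on $\Omega$; this is exactly where simple connectedness of $\Omega$ enters, and the theorem as stated carries no such assumption (the paper only invokes simple connectedness later, for instance when defining $K^t$ or when working on $\mathbb{G}_2$). On a multiply connected $\Omega$ your global $\varphi$ need not exist, so as written the proof is incomplete in that generality.

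The standard Cowen--Douglas/Chen--Douglas route avoids this by a local-to-global rigidity step that uses the Hilbert module structure rather than the topology of $\Omega$. Equal curvatures give, on any small ball $U\subset\Omega$, a holomorphic non-vanishing $\varphi_U$ with $K_1(z,w)=\varphi_U(z)K_2(z,w)\overline{\varphi_U(w)}$ for $z,w\in U$ (your pluriharmonic argument works locally). One then defines $V$ on $\{K_2(\cdot,w):w\in U\}$ exactly as you do; the crucial point is that for an analytic Hilbert module this family already has dense linear span in $\mathcal{H}_2$ (if $f\perp K_2(\cdot,w)$ for all $w\in U$ then $f|_U=0$, hence $f\equiv 0$ by analyticity). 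Thus the locally defined isometry extends to a global unitary intertwining the module actions, bypassing any global pluriharmonic conjugate. Your polarization and change-of-scale mechanics are the right ingredients; the difference is that the classical proof feeds them into a density/rigidity argument rather than a global $\overline{\partial}$-type argument, which is what buys generality beyond simply connected $\Omega$.
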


Here $\mathbb{K}$ is the $m \times  m$ matrix of real analytic functions defined from the coefficients of the curvature $\mathcal{K}$ of the line bundle $ \mathsf{ L }_{\mathcal{H}}$, see  \eqref{curvaturedefn} and \eqref{eqn:1.4}.


We now formulate a criterion for $ G $-homogeneity of an analytic Hilbert module $\mathcal{H}$ involving the curvature matrix $\mathbb{K}$ associated with the corresponding line bundle $\sf L_{\mathcal{H}}$. 

\begin{prop}\label{curvature transformation}
Let $\mathcal{H}$ be an analytic Hilbert module and $ g : \Omega \rightarrow \Omega $ be a biholomorphism. Then $\boldsymbol{M}_g$ is unitarily equivalent to $\boldsymbol{M}$ if and only if 
\begin{equation} \label{eqn:2.1}
\mathbb{K}(w)=\left(Dg(g^{-1}(w))^{\operatorname{tr}}\right)^{-1}\mathbb{K}(g^{-1}(w))\left(\overline{Dg( g^{-1}(w))}\right)^{-1},\end{equation}
where $ D g $ denotes the derivative of $ g $.
\end{prop}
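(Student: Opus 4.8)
The plan is to reduce the unitary equivalence of $\boldsymbol{M}_g$ and $\boldsymbol{M}$ to an equality of curvature matrices via the Cowen--Douglas theorem, and then to compute explicitly how the curvature matrix transforms under the biholomorphism $g$. Recall from the discussion preceding Theorem \ref{thm:2.1-Kquasi} that $\boldsymbol{M}_g$ on $\mathcal{H}$ is unitarily equivalent to the tuple $\widetilde{\boldsymbol{M}}$ of multiplication by the coordinate functions on the analytic Hilbert module $(\mathcal{H}_g, K_g)$, where $K_g(z,w) = K(g^{-1}(z), g^{-1}(w))$. Thus $\boldsymbol{M}_g$ is unitarily equivalent to $\boldsymbol{M}$ if and only if $(\mathcal{H}_g, K_g)$ and $(\mathcal{H}, K)$ are unitarily equivalent as analytic Hilbert modules. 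By Theorem \ref{module equiv}, this holds if and only if the corresponding curvature matrices coincide, that is, $\mathbb{K}_g(w) = \mathbb{K}(w)$ for all $w \in \Omega$, where $\mathbb{K}_g$ denotes the curvature matrix \eqref{eqn:1.4} built from $K_g$.

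The heart of the argument is the transformation rule for $\mathbb{K}_g$. Writing $\psi := g^{-1}$, I would set $h(w) := \log K(w,w)$, so that $\log K_g(w,w) = h(\psi(w))$. Differentiating by the chain rule and keeping track of which slot is holomorphic and which is anti-holomorphic --- the key point being that $\partial/\partial w_i$ only sees the holomorphic argument $\psi(w)$ while $\partial/\partial \overline{w_j}$ only sees the anti-holomorphic argument $\overline{\psi(w)}$, since $\psi$ is holomorphic --- one obtains
\[
\frac{\partial^2}{\partial w_i \partial \overline{w_j}}\, h(\psi(w)) = \sum_{k,l} \mathbb{K}(\psi(w))_{kl}\, (D\psi(w))_{ki}\, \overline{(D\psi(w))_{lj}},
\]
where $(D\psi(w))_{ki} = \partial \psi_k/\partial w_i$. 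In matrix form this reads $\mathbb{K}_g(w) = D\psi(w)^{\operatorname{tr}}\, \mathbb{K}(\psi(w))\, \overline{D\psi(w)}$. Conceptually this is just the functoriality of $\partial\overline{\partial}$ under holomorphic maps: the curvature $(1,1)$-form of $K_g$ is the pullback $\psi^{*}$ of the curvature $(1,1)$-form of $K$, a fact one may invoke as an alternative to the index computation.

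Finally I would apply the inverse function theorem, $D\psi(w) = D(g^{-1})(w) = \big(Dg(g^{-1}(w))\big)^{-1}$, and substitute. Using $(A^{-1})^{\operatorname{tr}} = (A^{\operatorname{tr}})^{-1}$ and $\overline{A^{-1}} = (\overline{A})^{-1}$, the equivalence condition $\mathbb{K}_g(w) = \mathbb{K}(w)$ becomes
\[
\mathbb{K}(w) = \big(Dg(g^{-1}(w))^{\operatorname{tr}}\big)^{-1} \mathbb{K}(g^{-1}(w)) \big(\overline{Dg(g^{-1}(w))}\big)^{-1},
\]
which is precisely \eqref{eqn:2.1}. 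I expect the main obstacle to be the bookkeeping in the chain-rule step --- correctly separating the holomorphic and anti-holomorphic contributions and verifying that the Jacobian appears as a transpose on the left and a conjugate on the right; once that index computation is pinned down, the remainder is routine matrix algebra together with the two cited theorems.
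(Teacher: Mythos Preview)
Your proposal is correct and follows essentially the same route as the paper: reduce to the equivalence of the analytic Hilbert modules $(\mathcal{H},K)$ and $(\mathcal{H}_g,K_g)$, invoke Theorem~\ref{module equiv} to turn this into equality of curvature matrices, and then compute $\mathbb{K}_g$ from $\mathbb{K}$ via the chain rule. The paper carries out the chain-rule step slightly more tersely, writing the result directly in terms of $Dg(g^{-1}(w))^{-1}$ rather than passing through $D\psi$ and the inverse function theorem, but the argument is the same.
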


\begin{proof}
The Hilbert space $\mathcal{H}_g$ with the reproducing kernel $ K_g : \Omega \times \Omega \to \mathbb{ C } $, given by Equation \eqref{kg}, is an analytic Hilbert module. Suppose $\mathsf L_{\mathcal{H}}$ and $\mathsf L_{\mathcal{H}_g}$ are the corresponding Hermitian holomorphic line bundles with the holomorphic frames $\{K(\cdot, \bar w) : w \in \Omega^*\}$ and $\{K_g(\cdot, \bar w) : w \in \Omega^*\}$, respectively. If $\mathbb K_g$ is the curvature matrix of the line bundle $\mathsf L_{g}$, using the chain rule, we have
\begin{eqnarray}
\nonumber {\mathbb {K}}_{g}(w)&=& \big(\!\!\big( -\frac{\partial^2}{\partial w_i\partial \overline{w}_j}\log K_g(w, w) \big) \!\!\big)_{i,j=1}^m\\
\nonumber &=& \big(\!\!\big( -\frac{\partial^2}{\partial w_i\partial \overline{w}_j}\log K(g^{-1}(w), g^{-1}(w)) \big) \!\!\big)_{i,j=1}^m\\
\nonumber &=& \big(Dg( g^{-1}(w))^{\operatorname{tr}}\big)^{-1}\mathbb{K}(g^{-1}(w))\left(\overline{Dg( g^{-1}(w))}\right)^{-1},
\end{eqnarray}
where $\mathbb K$ is the curvature matrix of the line bundle $\mathsf L_{\mathcal{H}}$.

Note that the line bundles $\mathsf L_{\mathcal{H}}$ and $\mathsf L_{\mathcal{H}_g}$ are isomorphic if and only if $\mathbb K(w) = \mathbb K_g(w)$ for every $w \in \Omega$.  Thus, by Theorem \ref{module equiv}, the Hilbert modules $\mathcal{H}$ and $\mathcal{H}_g$ are unitarily equivalent if and only if  $\mathbb K(w) = \mathbb K_g(w)$ for $w \in \Omega$. Consequently, $\boldsymbol{M}_g$ and $\boldsymbol{M}$ are unitarily equivalent if and only if  Equation \eqref{eqn:2.1} holds.
\end{proof}

Now taking the determinant in both sides of Equation \eqref{eqn:2.1},   we obtain the following corollary.
\begin{cor} \label{det transformation formula}
Let $\mathcal{H}$ be a $G$-homogeneous analytic Hilbert module with the reproducing kernel $ K : \Omega \times \Omega \rightarrow \mathbb C  $. Then, we have  
    $$ \det \mathbb{ K } ( z ) = \det D g ( z ) \det \mathbb{ K } ( g ( z ) ) \overline{ \det D g ( z ) }, ~ z \in \Omega, g \in G, $$
    where $ \mathbb{ K } $ is the curvature matrix of the corresponding Hermitian holomorphic line bundle $\mathsf L_{\mathcal{H}}$.
\end{cor}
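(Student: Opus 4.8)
The plan is to read the identity directly off Proposition \ref{curvature transformation} by passing to determinants; no new geometric input is needed. First I would invoke homogeneity: since $\mathcal{H}$ is $G$-homogeneous, for each fixed $g \in G$ the tuple $\boldsymbol{M}_g$ is unitarily equivalent to $\boldsymbol{M}$, so the hypothesis of Proposition \ref{curvature transformation} is met and Equation \eqref{eqn:2.1} holds for all $w \in \Omega$. Thus I may start from
\[
\mathbb{K}(w)=\left(Dg(g^{-1}(w))^{\operatorname{tr}}\right)^{-1}\mathbb{K}(g^{-1}(w))\left(\overline{Dg(g^{-1}(w))}\right)^{-1}.
\]

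Next I would take $\det$ of both sides and use multiplicativity of the determinant together with the three elementary scalar identities $\det(A^{\operatorname{tr}})=\det A$, $\det(A^{-1})=(\det A)^{-1}$, and $\det(\overline{A})=\overline{\det A}$. The transpose in the left factor disappears, the two inverses produce reciprocals, and the conjugate in the right factor commutes past the determinant, yielding
\[
\det\mathbb{K}(w)=\big(\det Dg(g^{-1}(w))\big)^{-1}\,\det\mathbb{K}(g^{-1}(w))\,\big(\overline{\det Dg(g^{-1}(w))}\big)^{-1},\quad w\in\Omega.
\]

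To reach the stated form I would make the change of variable $w=g(z)$, i.e.\ $g^{-1}(w)=z$, which turns the displayed identity into $\det\mathbb{K}(g(z))=(\det Dg(z))^{-1}\det\mathbb{K}(z)\,(\overline{\det Dg(z)})^{-1}$. Since $\det Dg(z)$ and $\overline{\det Dg(z)}$ are nonzero scalars (as $g$ is a biholomorphism), I would multiply through by $\det Dg(z)$ on one side and $\overline{\det Dg(z)}$ on the other and rearrange to obtain exactly $\det\mathbb{K}(z)=\det Dg(z)\,\det\mathbb{K}(g(z))\,\overline{\det Dg(z)}$ for all $z\in\Omega$ and $g\in G$.

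I do not expect any genuine obstacle here: the argument is pure bookkeeping once Equation \eqref{eqn:2.1} is in hand. The only points requiring care are the correct handling of the transpose/conjugate/inverse scalar identities and the substitution $w=g(z)$, so that the Jacobian factors land on the intended sides; everything else is immediate from multiplicativity of the determinant.
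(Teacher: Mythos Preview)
Your proposal is correct and follows essentially the same approach as the paper: the paper simply states that the corollary is obtained by taking the determinant on both sides of Equation~\eqref{eqn:2.1}, and you have spelled out exactly this computation, including the substitution $w=g(z)$ and the rearrangement of the scalar Jacobian factors.
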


Recall that the closed subspace of $ L^2 ( \Omega, dV ) $ consisting of holomorphic functions on $ \Omega $, where $ dV $ denotes the Lebesgue measure on $ \Omega $, is a reproducing kernel Hilbert space, known as the Bergman space over $ \Omega $. The corresponding reproducing kernel is called the Bergman kernel of $ \Omega $. It is well known that the Bergman kernel of $ \Omega $ is a quasi-invariant kernel with respect to Aut$ ( \Omega ) $ (cf. \cite[p. 191]{GMIJPAM}), namely, it satisfies 
Equation \eqref{eqn:1.1} with the cocycle $ J ( g, z ) := \det Dg ( z ) $, $ g \in \text{ Aut} ( \Omega ) $ and $ z \in \Omega $. Thus, the corollary below is an immediate consequence of Corollary \ref{det transformation formula}.
\begin{cor}  \label{multipleBerg}
Let $\Omega$ be a transitive $G$-space. Assume that  $(\mathcal{H}, K)$ is a $G$-homogeneous analytic Hilbert module.  Then $\det \mathbb K(z)$ is a positive scalar multiple of the Bergman kernel on $\Omega$. 
\end{cor}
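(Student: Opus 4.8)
The plan is to exploit the observation that both $\det\mathbb{K}$ and the diagonal restriction of the Bergman kernel obey the \emph{same} automorphy law under $G$, and then to let transitivity collapse the two into a proportionality.

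First I would record the two transformation rules side by side. Applying Corollary \ref{det transformation formula} to our homogeneous kernel $K$ gives, for every $g\in G$ and $z\in\Omega$,
\[
\det\mathbb{K}(z) = \det Dg(z)\,\det\mathbb{K}(g(z))\,\overline{\det Dg(z)} = |\det Dg(z)|^2\,\det\mathbb{K}(g(z)).
\]
On the other hand, the Bergman kernel $B_\Omega$ is quasi-invariant with cocycle $J(g,z)=\det Dg(z)$, so specializing Equation \eqref{eqn:1.1} (in the form of Theorem \ref{thm:2.1-Kquasi}) to the diagonal $w=z$ yields
\[
B_\Omega(g(z),g(z)) = |\det Dg(z)|^{-2}\,B_\Omega(z,z), \quad\text{equivalently}\quad B_\Omega(z,z) = |\det Dg(z)|^2\,B_\Omega(g(z),g(z)).
\]
Thus $z\mapsto\det\mathbb{K}(z)$ and $z\mapsto B_\Omega(z,z)$ transform by one and the same factor $|\det Dg(z)|^2$.

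Next, since $B_\Omega(z,z)>0$ for all $z\in\Omega$, I would introduce the quotient $\rho(z):=\det\mathbb{K}(z)/B_\Omega(z,z)$. Dividing the two displayed identities makes the common factor $|\det Dg(z)|^2$ cancel, leaving $\rho(z)=\rho(g(z))$ for all $g\in G$ and $z\in\Omega$; that is, $\rho$ is constant along every $G$-orbit. Because $G$ acts transitively on $\Omega$ there is only a single orbit, so $\rho\equiv c$ for some real constant $c$, and hence $\det\mathbb{K}(z)=c\,B_\Omega(z,z)$ for all $z\in\Omega$. Since $\det\mathbb{K}(z)$ is the diagonal restriction of the sesquianalytic kernel $\det\big(\!\!\big(\partial_{z_i}\overline{\partial}_{w_j}\log K(z,w)\big)\!\!\big)$, this diagonal identity propagates to the full kernels by the standard uniqueness for functions holomorphic in the first variable and anti-holomorphic in the second, justifying the statement at the level of reproducing kernels.

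Finally I would check $c>0$. As $\mathbb{K}(z)$ is non-negative definite for every $z$ (recalled just after Equation \eqref{eqn:1.4}), we have $\det\mathbb{K}(z)\ge 0$ and therefore $c\ge 0$; the only thing to rule out is $c=0$. This is exactly where the analytic Hilbert module hypothesis is used: the coordinate multiplication tuple lies in the rank-one Cowen–Douglas class and the associated bundle map $\bar w\mapsto[K(\cdot,w)]\in P_1(\mathcal{H})$ is an immersion, which is equivalent to the curvature matrix $\mathbb{K}(z)$ being positive definite rather than merely non-negative definite; hence $\det\mathbb{K}(z)>0$ and $c>0$. I expect this last non-degeneracy step to be the only genuine obstacle, as everything preceding it is a direct bookkeeping of the automorphy factor $|\det Dg|^2$ and an appeal to transitivity.
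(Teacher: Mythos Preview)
Your proof is correct and follows exactly the approach the paper intends: the paper records it as an immediate consequence of Corollary~\ref{det transformation formula} together with the quasi-invariance of the Bergman kernel noted just before the statement, and you have simply filled in the details (forming the $G$-invariant quotient and invoking transitivity). Your justification of $c>0$ via the immersion property is a nice addition; concretely, linear independence of $K(\cdot,w),\,\partial_{\bar w_1}K(\cdot,w),\ldots,\partial_{\bar w_m}K(\cdot,w)$ follows by pairing against the polynomials $1,z_1,\ldots,z_m\in\mathcal{H}$, which is available precisely because $(\mathcal{H},K)$ is an analytic Hilbert module.
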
 

As a consequence of Proposition \ref{curvature transformation}, we conclude that an analytic Hilbert module $ \mathcal{ H } $ is $ G $-homogeneous if and only if the associated curvature matrix satisfies  
Equation \eqref{eqn:2.1} for every $ g \in G $. However, since $ G \cdot \Lambda = \Omega $, where $ \Lambda $ is a fundamental set,
Equation \eqref{eqn:2.4} provides a weaker criterion for $ G $-homogeneity of $ \mathcal{ H } $. Moreover,  Equation \eqref{eqn:2.4} given below can be used as a formula to compute the curvature matrix associated with such module from its restriction to the fundamental set satisfying Equation \eqref{eqn:2.5}.

\begin{theorem} \label{thm:2.2}
    Suppose $\mathcal{H}$ is an analytic Hilbert module. Then $\mathcal{H}$ is $G$- homogeneous if and only if 
\begin{equation}\label{eqn:2.4}
\mathbb {K}(w) = {\big (D g(z)^{\operatorname{tr}}\big )}^{-1}  \mathbb {K}(z) \big (\overbar{D g(z)}\big )^{-1},  z\in \Lambda,\,w\in \Omega \setminus \Lambda, 
\end{equation}
for $g \in G$ such that $g(z) = w$. Moreover, if we also have $h(z) = w$ for $h\in G$, $h\ne g$,  then 
\begin{equation} \label{eqn:2.5}
\mathbb K(z) = D(g^{-1}\circ h) (z)^{\operatorname{tr}} \mathbb K(z)  \overline{D(g^{-1}\circ h)(z)}, ~~~ z\in \Lambda. 
\end{equation}
\end{theorem}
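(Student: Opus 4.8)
The plan is to derive both implications from Proposition \ref{curvature transformation}, which already identifies $G$-homogeneity of $\mathcal{H}$ with the validity of Equation \eqref{eqn:2.1} for \emph{every} $g\in G$ and \emph{every} $w\in\Omega$. Thus the entire content of the theorem is that the seemingly weaker requirement \eqref{eqn:2.4}, which only transports the curvature matrix from a representative $z\in\Lambda$ to points $w$ off the fundamental set, already forces the full transformation law \eqref{eqn:2.1}. The mechanism that makes this work is the chain rule for the derivative, i.e.\ the cocycle identity $D(g\circ h)(z) = Dg(h(z))\,Dh(z)$, together with careful bookkeeping of the transpose and conjugate that sandwich $\mathbb{K}$.

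For the forward implication I would assume $\mathcal{H}$ is $G$-homogeneous and invoke Proposition \ref{curvature transformation} to get \eqref{eqn:2.1} for all $g$ and $w$. Specializing to $z\in\Lambda$ and $w = g(z)\in\Omega\setminus\Lambda$, so that $g^{-1}(w)=z$, turns \eqref{eqn:2.1} into \eqref{eqn:2.4} at once. To obtain \eqref{eqn:2.5} I would take a second element $h\in G$ with $h(z)=w$; then $g^{-1}\circ h$ fixes $z$, i.e.\ lies in $\mathrm{stab}(z)$, and writing $h = g\circ(g^{-1}\circ h)$ and applying the chain rule gives $Dh(z) = Dg(z)\,D(g^{-1}\circ h)(z)$. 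Feeding the two instances of \eqref{eqn:2.4}, for $g$ and for $h$, into each other and cancelling the common outer factors $Dg(z)^{\operatorname{tr}}$ and $\overline{Dg(z)}$ leaves exactly $\mathbb{K}(z) = D(g^{-1}\circ h)(z)^{\operatorname{tr}}\mathbb{K}(z)\overline{D(g^{-1}\circ h)(z)}$, which is \eqref{eqn:2.5}.

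For the converse I would fix an arbitrary $g\in G$ and $w\in\Omega$ and aim to recover \eqref{eqn:2.1}. Writing $w' = g^{-1}(w)$, both $w$ and $w'$ lie in one orbit; let $z\in\Lambda$ be its representative and choose $g_1\in G$ with $g_1(z)=w'$, so that $g_2 := g\circ g_1$ satisfies $g_2(z)=w$. Applying \eqref{eqn:2.4} to $g_1$ expresses $\mathbb{K}(w')$ through $\mathbb{K}(z)$, and applying it to $g_2$ expresses $\mathbb{K}(w)$ through $\mathbb{K}(z)$; substituting $Dg_2(z) = Dg(w')\,Dg_1(z)$ from the chain rule and cancelling the inner block $(Dg_1(z)^{\operatorname{tr}})^{-1}\mathbb{K}(z)(\overline{Dg_1(z)})^{-1} = \mathbb{K}(w')$ collapses the expression for $\mathbb{K}(w)$ precisely to $(Dg(w')^{\operatorname{tr}})^{-1}\mathbb{K}(w')(\overline{Dg(w')})^{-1}$, which is \eqref{eqn:2.1}. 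By Proposition \ref{curvature transformation} this yields that $\boldsymbol{M}_g$ is unitarily equivalent to $\boldsymbol{M}$ for every $g$, hence homogeneity.

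The main obstacle I anticipate is bookkeeping around the boundary cases of this argument rather than any deep difficulty. When $w'$ or $w$ already lies in $\Lambda$ — in particular when $g\in\mathrm{stab}(z)$, so that $w=w'=z$ — Equation \eqref{eqn:2.4} as stated (with $w\in\Omega\setminus\Lambda$) does not apply directly, and one must separately secure the stabilizer identity $Dg_0(z)^{\operatorname{tr}}\mathbb{K}(z)\overline{Dg_0(z)} = \mathbb{K}(z)$ for $g_0\in\mathrm{stab}(z)$. I would recover this, whenever the orbit of $z$ is non-trivial, by picking some $w\in\Omega\setminus\Lambda$ in that orbit together with $\gamma\in G$, $\gamma(z)=w$, comparing \eqref{eqn:2.4} for $\gamma$ and for $\gamma\circ g_0$, and cancelling as before; this is exactly the $w$-independent residue of \eqref{eqn:2.5} and shows that \eqref{eqn:2.5} functions as the consistency (well-definedness) condition that makes the assignment \eqref{eqn:2.4} unambiguous. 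Vigilance about the placement of the transpose and the conjugate in each application of the chain rule is the only other point demanding genuine attention.
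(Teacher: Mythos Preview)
Your proposal is correct and follows essentially the same route as the paper: both directions are reduced to Proposition \ref{curvature transformation}, with the converse handled by writing $w=g_2(z)$ and $g^{-1}(w)=g_1(z)$ for a common $z\in\Lambda$ (the paper takes $g_2=g_0$ and $g_1=g^{-1}\circ g_0$, which is just a relabelling of your $g_1,g_2$), and then collapsing the nested factors via the chain rule. Your treatment is in fact slightly more careful than the paper's, since you explicitly flag the boundary case where $w$ or $g^{-1}(w)$ lands back in $\Lambda$ and explain how \eqref{eqn:2.5} supplies the needed stabilizer identity there; the paper's argument passes over this point silently.
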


\begin{proof}
If $ \mathcal{H} $ is $ G $-homogeneous, by Proposition \ref{curvature transformation}, the curvature $ \mathbb{ K } $ of the holomorphic Hermitian line bundle $\mathsf L_{\mathcal{H}} $ associated to $ \mathcal{H} $ satisfies Equation \eqref{eqn:2.1}. In particular, any $ z \in \Lambda $ and $ w \in \Omega \setminus \Lambda $, satisfying $ g ( z ) = w $ for some $ g \in G $, yields Equation \eqref{eqn:2.4}. 

Conversely, for any $w\in \Omega$, there exists a $z \in \Lambda$ and $g_0 \in G$ such that $g_0(z) = w$. (Note that $g_0$ is not necessarily unique.) Let $g$ be a fixed but arbitrary element of $G$. Thus $g \circ g^{-1} \circ g_0(z) = w$ and $ g^{-1} \circ g_0 ( z ) = g^{-1} (w)$. We are given:
\begin{align*}
\mathbb K(w) &= \big ( D(g\circ g^{-1} \circ g_0 ) ( z )^{\operatorname{tr}} \big )^{-1} \mathbb K(z) \big ( \overline{D(g\circ g^{-1} \circ g_0 ) (z) } \big )^{- 1}\\
&= \big ( [ D g ( g^{ - 1 } \circ g_0 ( z ) ) D ( g^{ - 1 } \circ g_0 ) ( z ) ]^{ \operatorname{tr} }\big )^{ - 1 } \mathbb{ K } ( z ) \big ( \overline{ D g ( g^{ - 1 } \circ g_0 ( z ) ) } \overline{ D ( g^{ - 1 } \circ g_0 ) ( z ) } \big )^{ - 1 }\\
&= \big ( ( D g ( g^{ - 1 } \circ g_0 ( z ) ) )^{ \operatorname{tr}} \big )^{ - 1 } \mathbb{ K } ( g^{-1} ( w ) ) \big ( \overline{  D g ( g^{ - 1 } \circ g_0 ( z ) ) } \big )^{ - 1 }\\
&= \big ( D g ( g^{ - 1 } ( w ) )^{ \operatorname{tr} } \big )^{ - 1 } \mathbb{ K } ( g^{ - 1 } ( w ) ) \big ( \overline{ D g ( g^{ - 1 } ( w ) ) } \big )^{ - 1 },
\end{align*}
where to verify the third equality, we note that $g^{-1} \circ g_0 ( z ) = g^{-1} (w)$ and consequently, substituting $g\circ g_0$ in \eqref{eqn:2.4}, we have 
\[ \mathbb{ K } ( g^{-1} ( w ) ) = \big (  ( D ( g^{ - 1 } \circ g_0 ) ( z ) )^{ \operatorname{tr} } \big )^{ - 1 } \mathbb{ K } ( z ) \big ( \overline{ D ( g^{ - 1 } \circ g_0 ) ( z ) } \big )^{ - 1 } . \]
Substituting the value of $\mathbb K(z)$ from this equality completes the verification of the third equality in the string of equalities above. 
 Now applying Proposition \ref{curvature transformation}, we conclude that $\boldsymbol{M}_g$ is unitarily equivalent to $\boldsymbol{M}$ completing the first half of the proof. 

Finally, equating the values of $ \mathbb{ K } ( w ) $ obtained by choosing $ g $ and $h$ with $g(z)=w=h(z)$, $z\in \Lambda$, in \eqref{eqn:2.4}, we verify the equality in \eqref{eqn:2.5}.
\end{proof}

\begin{remark}
From Theorem \ref{thm:2.2}, we observe that the values of $\mathbb{K}$ on $\Omega$  can be determined from its values on $\Lambda$, provided $ \mathbb{ K } ( z ) $ at each $ z \in \Lambda $ satisfies the identity given in Equation \eqref{eqn:2.5} for every pair $g, h$ in $G$ with $g(z) = w = h(z)$. Therefore, the values of $\mathbb{K}$ on $\Lambda$  determine the unitary equivalence class of the associated homogeneous analytic Hilbert module.
\end{remark}

Let $ \mathcal{ H } $ be a $ G $-homogeneous analytic Hilbert module with the reproducing kernel $ K : \Omega \times \Omega \to \mathbb{ C } $. Thus, $K$ is a holomorphic function in both the variables when thought of as a function on $\Omega\times \Omega^*$.
Therefore,  $\log K$ is a well-defined function on $\Omega\times \Omega$, holomorphic in the first variable and anti-holomorphic in the second variable if $\Omega$ is simply connected and $K$ is non-vanishing on $\Omega$. We make these assumptions in what follows. Now, the function 
$$ \mathbb{ K }^{ [ \lambda ] } ( z, w ) := K ( z, w )^{ \lambda + 2 } \left( \!\! \left( \frac{ \partial^2 }{ \partial z_i \partial \overline{ w_j } } \log K ( z, w ) \right) \!\! \right)_{ i, j = 1 }^m, ~~~ z, w \in \Omega, $$
is an well-defined non-negative definite taking values in $\mathbb{C}^{m\times m}$.

In general, $(\mathcal{H}, K^\lambda)$ needs not be an analytic Hilbert module even if $\lambda \in \mathcal{W}_{\Omega}(K)$. Let $\mathcal{W}_{\Omega, a}(K)$ be a subset of $\mathcal{W}_{\Omega}(K)$ such that  $(\mathcal{H},  K^{ \lambda})$ is an analytic Hilbert module for each $\lambda \in\mathcal{W}_{\Omega, a}$. Note that $ \mathbb{ N } \subset \mathcal{ W }_{ \Omega, a } ( K ) $. Also, a rather straightforward observation -- $ K $ is diagonal if and only if for some $ w_0 \in \Omega $, 
$$ \partial_1^{i_1} \cdots \partial_m^{i_m} \overline{ \partial_1 }^{j_1} \cdots \overline{ \partial_m }^{j_m} K ( w_0, w_0 ) = 0 \hspace{0.1in} \text{whenever} ~~~ i_{ \ell } \neq j_{ \ell } ~~~ \text{for some} ~~~ \ell = 1, \hdots, m, $$
-- leads to the fact that $ \mathcal{ W }_{ \Omega} ( K ) = \mathcal{ W }_{ \Omega, a } ( K ) $ whenever $ K $ is a diagonal kernel on $ \Omega $. We show that each such $ \mathbb{ K }^{ [ \lambda ] } $ gives rise to a $ G $-homogeneous analytic Hilbert module, provided $\lambda \in \mathcal{W}_{\Omega, a}(K)$.

\begin{prop}\label{homog of K^[lambada]}
Suppose that $(\mathcal{ H }, K)$ is a $G$-homogeneous analytic Hilbert module.  If $ \lambda \in \mathcal{ W }_{ \Omega, a} ( K ) $, then the Hilbert module $( \mathbb{H}^{[\lambda]}, \mathbb K^{[\lambda]} ) $  equipped with the natural action of $\mathbb{C}[\boldsymbol{z}]$ is also homogeneous and contains $\mathbb C[\boldsymbol{z}]\otimes \mathbb C^m$ as a dense subset.
\end{prop}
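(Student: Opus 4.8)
The plan is to prove homogeneity through the reproducing-kernel (quasi-invariance) route rather than the curvature route, since $\mathbb{K}^{[\lambda]}$ is itself a $\mathbb{C}^{m\times m}$-valued kernel. Concretely, I would first derive the exact transformation law of $\mathbb{K}^{[\lambda]}$ under the $G$-action, recognize it as quasi-invariance with respect to an explicit matrix-valued projective cocycle, and then invoke the $\mathbb{C}^m$-valued analogue of Theorem \ref{thm:2.1-Kquasi} (equivalently the vector-valued form of \cite[Theorem 3.7]{CS}) to produce the implementing unitaries. The assertion that $\mathbb{C}[\boldsymbol{z}]\otimes\mathbb{C}^m$ is dense I would treat separately, by transporting the polynomial density of $(\mathcal{H},K^\lambda)$; throughout, the branch of $K^\lambda$ is unambiguous because $\Omega$ is simply connected and $K$ is nonvanishing.

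First I would record how the polarized curvature matrix transforms. Writing the homogeneity of $K$ as in Theorem \ref{thm:2.1-Kquasi}, namely $K(g(z),g(w)) = J(g,z)^{-1}K(z,w)\overline{J(g,w)^{-1}}$, and taking logarithms gives
\[\log K(g(z),g(w)) = \log K(z,w) - \log J(g,z) - \overline{\log J(g,w)}.\]
Applying $\partial^2/\partial z_i\partial\overline{w}_j$ and using the chain rule, the two $\log J$ terms are annihilated — $\log J(g,\cdot)$ is holomorphic, hence killed by $\partial_{\overline{w}_j}$, and its conjugate is killed by $\partial_{z_i}$ — leaving
\[\big(\!\!\big(\partial_{z_i}\partial_{\overline{w}_j}\log K(z,w)\big)\!\!\big) = Dg(z)^{\operatorname{tr}}\,\big(\!\!\big(\partial_{z_i}\partial_{\overline{w}_j}\log K\big)\!\!\big)(g(z),g(w))\,\overline{Dg(w)}.\]
Multiplying this by the $(\lambda+2)$-nd power of the relation $K(z,w) = J(g,z)K(g(z),g(w))\overline{J(g,w)}$ and substituting $z\mapsto g^{-1}(z)$, $w\mapsto g^{-1}(w)$, I obtain the matrix quasi-invariance
\[\mathbb{K}^{[\lambda]}(g^{-1}(z),g^{-1}(w)) = \mathcal{C}(g,z)\,\mathbb{K}^{[\lambda]}(z,w)\,\mathcal{C}(g,w)^*,\qquad \mathcal{C}(g,z) := J(g,g^{-1}(z))^{\lambda+2}\,Dg(g^{-1}(z))^{\operatorname{tr}}.\]
I would then verify that $\mathcal{C}$ is a matrix-valued projective cocycle in the sense of \eqref{cocycle}, which follows by combining the cocycle identity for $J$ with the chain rule $D(g_1g_2)(z)=Dg_1(g_2(z))Dg_2(z)$.

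With this in hand, homogeneity is formal. The quasi-invariance above says precisely that multiplication by $\mathcal{C}(g,\cdot)^{-1}$ carries the reproducing kernel of $\mathbb{H}^{[\lambda]}_g:=\{f\circ g^{-1}:f\in\mathbb{H}^{[\lambda]}\}$ to that of $\mathbb{H}^{[\lambda]}$; hence the composite $(U_g f)(z) = \mathcal{C}(g,z)^{-1}f(g^{-1}(z))$ is unitary on $\mathbb{H}^{[\lambda]}$, and $g\mapsto U_g$ is a projective multiplier representation intertwining $\boldsymbol{M}_g$ and $\boldsymbol{M}$. This is the vector-valued counterpart of the scalar diagram chase preceding Theorem \ref{thm:2.1-Kquasi}; since the $\mathbb{C}^n$-valued reproducing-kernel framework and \cite[Theorem 3.7]{CS} apply verbatim to $\mathbb{C}^m$-valued kernels, I would quote them rather than repeat the argument.

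It remains to identify $(\mathbb{H}^{[\lambda]},\mathbb{K}^{[\lambda]})$ as an analytic-type module, i.e. to show the coordinate multipliers are bounded and $\mathbb{C}[\boldsymbol{z}]\otimes\mathbb{C}^m$ is dense. Here I would use the pointwise identity
\[\mathbb{K}^{[\lambda]}(z,w) = K(z,w)^{\lambda}\,\big(\!\!\big(K(z,w)\,\partial_{z_i}\partial_{\overline{w}_j}K(z,w) - \partial_{z_i}K(z,w)\,\partial_{\overline{w}_j}K(z,w)\big)\!\!\big)_{i,j=1}^m,\]
which exhibits $\mathbb{K}^{[\lambda]}$ through the first-order holomorphic jet of $K^\lambda$ (cf. \cite{GM}). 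Since $\lambda\in\mathcal{W}_{\Omega,a}(K)$, the space $(\mathcal{H},K^\lambda)$ is an analytic Hilbert module in which $\mathbb{C}[\boldsymbol{z}]$ is dense; transporting this along the jet map should give density of $\mathbb{C}[\boldsymbol{z}]\otimes\mathbb{C}^m$ in $\mathbb{H}^{[\lambda]}$ together with boundedness of the multipliers. I expect this transfer to be the main obstacle: one must check that the jet realization is an isometry onto $\mathbb{H}^{[\lambda]}$ — not merely bounded — and that it maps the polynomial ring of $(\mathcal{H},K^\lambda)$ onto a dense set of polynomial vectors, which means controlling the interaction between differentiation and completion. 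By contrast, once the vanishing of the $\log J$ terms is noted, the quasi-invariance computation itself is routine.
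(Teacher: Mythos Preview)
Your quasi-invariance computation is essentially the paper's: the proof of Equation \eqref{eqn:2.7.1} combines the polarized curvature transformation rule (Proposition \ref{curvature transformation}) with the $(\lambda+2)$-nd power of the scalar quasi-invariance of $K$, exactly as you do after killing the $\log J$ terms. So for the homogeneity assertion there is no substantive difference.

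Where your plan diverges is in the density and boundedness step, and there you make it harder than necessary. You describe the displayed identity as exhibiting $\mathbb{K}^{[\lambda]}$ through ``the first-order holomorphic jet of $K^\lambda$'' and then worry about whether a jet realization is an isometry onto $\mathbb{H}^{[\lambda]}$. But your identity does not express $\mathbb{K}^{[\lambda]}$ via jets of $K^\lambda$: the matrix factor involves $\partial_i K$ and $\partial_i\bar\partial_j K$, i.e.\ derivatives of $K$, not of $K^\lambda$. What the identity actually gives is the Schur factorization $\mathbb{K}^{[\lambda]} = K^\lambda\cdot\mathbb{K}^{[0]}$, and this is precisely how the paper proceeds. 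It quotes \cite[Proposition 3.4]{GM} directly for the fact that $\mathbb{C}[\boldsymbol{z}]\otimes\mathbb{C}^m$ is dense in $\mathbb{H}^{[0]}$ whenever $(\mathcal{H},K)$ is analytic; since $\lambda\in\mathcal{W}_{\Omega,a}(K)$ ensures $\mathbb{C}[\boldsymbol{z}]$ is dense in the space of $K^\lambda$, density in $\mathbb{H}^{[\lambda]}$ follows from the standard product-of-kernels argument. Boundedness of the coordinate multipliers likewise follows because boundedness on the $K^\lambda$-space alone already forces $(c^2-z_i\bar w_i)K^\lambda$ to be non-negative definite, hence so is $(c^2-z_i\bar w_i)K^\lambda\,\mathbb{K}^{[0]}$. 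This route is shorter and completely sidesteps the isometry-of-the-jet-map issue you anticipate.
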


\begin{proof}
Boundedness of the multiplication operators by the coordinate functions on $ \mathbb{H}^{[\lambda]} $ follows from the fact that the multiplication operators by the coordinate functions on the Hilbert space associated to the non-negative definite kernels $ K^\lambda $ are bounded.

Now, since $ \mathcal{ H } $ is $ G $-homogeneous, there exists a $ J : G \times \Omega \to \mathbb{ C } \setminus \{ 0 \} $ which, for each fixed $g$, is holomorphic on $ \Omega $ such that $K$ is quasi-invariant with respect to $J$: 
\begin{equation} \label{eqn:2.7} 
K ( z, w )^{ ( \lambda + 2 ) } = J ( g, z )^{ ( \lambda + 2 ) } K ( g ( z ), g ( w ) )^{ ( \lambda + 2 ) } (\overline{ J ( g, w ) } )^{ ( \lambda + 2 ) }, ~ g \in G. 
\end{equation}
Consequently, using Equation \eqref{eqn:2.1} we have 
\begin{eqnarray} \label{eqn:2.7.1}
\mathbb{ K }^{ [ \lambda ] } ( z, w ) 
& = & \big ( J ( g, z )^{ ( \lambda + 2 ) }  D g ( z ) \big )  \mathbb{ K }^{ [ \lambda ] } ( g ( z ), g ( w ) ) \overline{ \big ( J ( g, w )^{ ( \lambda + 2 ) }  D g ( w ) \big ) }
\end{eqnarray}
verifying that $ \mathbb{ K }^{ [ \lambda ] } $ is quasi-invariant and hence $ \mathbb{H}^{[\lambda]}$ is homogeneous.

Next, we note that the function 
$$ \mathbb{ K }^{ [0] } ( z, w ) := K ( z, w )^{ 2 } \left( \!\! \left( \frac{ \partial^2 }{ \partial z_i \partial \overline{ w_j } } \log K ( z, w ) \right) \!\! \right)_{ i, j = 1 }^m, ~~~ z, w \in \Omega, $$
is well-defined and is non-negative definite \cite[Corollary 2.4]{GM}. So if $(\mathcal{ H }, K)$ is analytic, the Hilbert space $\mathbb{H}^{[0]}$ determined by it contains the set of polynomials $\mathbb C[\boldsymbol{z}]\otimes \mathbb C^m$ as a dense subset \cite[Proposition 3.4]{GM}. 
Since $\lambda \in \mathcal{ W }_{ \Omega, a } ( K )$, the Hilbert space determined by the non-negative definite kernel $K^\lambda$  contains $\mathbb C[z]$ as a dense subset and therefore, $\mathbb C[z]\otimes \mathbb C^m$ is dense in $\mathbb{H}^{[\lambda]}$. This completes the proof.
\end{proof}

We now prove that $\det \mathbb K^{[\lambda]}$ gives rise to a $G$-homogeneous analytic Hilbert module whenever $\lambda \in \mathcal{ W }_{ \Omega, a } ( K )$. First, we need the following lemmas to accomplish this. In all of the lemmas below, we take $ \Omega $ to be a bounded domain in $ \mathbb{ C }^m $, $ K : \Omega \times \Omega \to \mathcal{ M }_r ( \mathbb{C} ) $ a reproducing kernel with the associated reproducing kernel Hilbert space $ \mathsf{ H } $ of holomorphic functions on $ \Omega $ and taking values in $ \mathbb{ C }^r $.

\begin{lemma} \label{lem:2.5}
    $ \det K : \Omega \times \Omega \rightarrow \mathbb{ C } $ is a non-negative definite kernel.
\end{lemma}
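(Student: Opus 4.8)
The plan is to exhibit $\det K$ explicitly as a Gram (feature-map) kernel, so that non-negative definiteness becomes automatic. Concretely, I aim to produce a Hilbert space $\mathcal E$ and a map $\psi\colon\Omega\to\mathcal E$ with $\det K(z,w)=\langle\psi(w),\psi(z)\rangle_{\mathcal E}$; once this is available, for any $w_1,\dots,w_p\in\Omega$ and $c_1,\dots,c_p\in\mathbb C$ one has $\sum_{i,j}\det K(w_i,w_j)\,c_j\overline{c_i}=\big\|\sum_j c_j\,\psi(w_j)\big\|_{\mathcal E}^2\ge 0$, which is exactly the assertion of the lemma.

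The first step is to factor the matrix kernel through its RKHS. Define $F(w)\colon\mathbb C^r\to\mathsf H$ by $F(w)\zeta:=K(\cdot,w)\zeta$. The reproducing property $\langle h,K(\cdot,w)\zeta\rangle_{\mathsf H}=\langle h(w),\zeta\rangle_{\mathbb C^r}$ shows that $F(w)^*h=h(w)$, and hence $F(z)^*F(w)=K(z,w)$ as operators on $\mathbb C^r$. The second, and decisive, step is to apply the $r$-th antisymmetric tensor-power functor $\Lambda^r$. Since $\Lambda^r\mathbb C^r\cong\mathbb C$ is spanned by the unit vector $\omega:=\epsilon_1\wedge\cdots\wedge\epsilon_r$, I set $\psi(w):=(\Lambda^r F(w))\,\omega\in\Lambda^r\mathsf H=:\mathcal E$. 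Combining the functoriality relations $\Lambda^r(AB)=(\Lambda^r A)(\Lambda^r B)$ and $\Lambda^r(A^*)=(\Lambda^r A)^*$ with the classical identity $(\Lambda^r T)\omega=(\det T)\,\omega$ for a square operator $T$ on $\mathbb C^r$, I obtain
\[
\det K(z,w)=\big\langle\Lambda^r\!\big(F(z)^*F(w)\big)\omega,\omega\big\rangle=\big\langle(\Lambda^r F(w))\omega,(\Lambda^r F(z))\omega\big\rangle=\langle\psi(w),\psi(z)\rangle_{\mathcal E},
\]
which is precisely the Gram representation sought. Unpacking $\Lambda^r F(w)$ in an orthonormal basis of $\mathsf H$ recovers the Cauchy--Binet expansion $\det K(z,w)=\sum_{S}\overline{\det F(z)_S}\,\det F(w)_S$, the sum being over $r$-element index subsets $S$, which makes the feature map fully explicit.

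The only delicate point is that $\mathsf H$, hence $\mathcal E=\Lambda^r\mathsf H$, is in general infinite dimensional, so one must invoke functoriality of $\Lambda^r$ for bounded operators and the absolute convergence of the resulting series. This is standard, but can be bypassed entirely: since non-negative definiteness is tested on finitely many points, I would fix $w_1,\dots,w_p$, replace $\mathsf H$ by the finite-dimensional subspace $\mathcal K_0:=\operatorname{span}\{F(w_a)\zeta:1\le a\le p,\ \zeta\in\mathbb C^r\}$ and each $F(w_a)$ by its compression to $\mathcal K_0$; then $\Lambda^r\mathcal K_0$ is finite dimensional and the Cauchy--Binet sum is finite, so every identity above is purely algebraic. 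I expect this finite-dimensional reduction to be the only mild obstacle---the algebraic core, namely the multiplicativity of the exterior power that turns $\det(F(z)^*F(w))$ into an inner product, requires no analysis at all.
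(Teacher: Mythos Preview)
Your proof is correct and follows essentially the same approach as the paper: both factor $K$ via a Kolmogorov-type decomposition (the paper writes $K(z,w)=H(z)H(w)^*$, you write $K(z,w)=F(z)^*F(w)$ through the RKHS), and then apply the $r$-th exterior power to obtain a Kolmogorov/Gram factorization of $\det K$. Your treatment is slightly more polished in that you phrase the passage to $\det$ functorially via $\Lambda^r$ and explicitly address the infinite-dimensional issue with a finite-dimensional reduction, whereas the paper simply wedges the row vectors of $H(z)$ and does not comment on this point; but the mathematical content is the same.
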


\begin{proof}
    Since $ K $ is a non-negative definite kernel taking values in $ \mathcal{M}_r ( \mathbb{C} ) $, by Kolmogorov's decomposition, there exists a function $ H : \Omega \rightarrow \mathcal{ B } ( \mathcal{ X }, \mathbb{ C }^r ) $ 
    such that
    $$ K ( z, w ) = H ( z ) H ( w )^*, ~~~ z, w \in \Omega  $$
 for some Hilbert space $ \mathcal{ X } $. Now  write $ H ( z ) = ( H_1 ( z ), \hdots, H_r ( z ) )^{\operatorname{tr}} $ where $ H_1 ( z ), \hdots, H_r ( z ) \in \mathcal{ X }^* $. Define, for each $ z \in \Omega $, the linear operator $ F ( z ) : \Omega \rightarrow \bigwedge^r \mathcal{ X } $ by 
 $$ F ( z ) : = H_1 ( z )^* \wedge \cdots \wedge H_r ( z )^* . $$
 Setting $ E ( z ) = F ( z )^* $ for $ z \in \Omega $, we have that
 $$ \det K ( z, w ) = E ( z ) E ( w )^*, ~~~ z, w \in \Omega, $$
 verifying that the function $ E : \Omega \rightarrow \mathcal{ B } ( \bigwedge^r \mathcal{ X }^*, \mathbb{ C } ) $ yields the Kolmogorov decomposition for $ \det K $. Consequently, $ \det K $ is a non-negative definite kernel.
\end{proof}

\begin{lemma}\label{lem:2.6}
If the set of all polynomials $\mathbb C[z] \otimes \mathbb C^r$ is dense in $\mathsf{H}$, then so is the set of all polynomials $\mathbb C[z]$ in the Hilbert space $\mathsf{H}_{\det}$ where $\mathsf{H}_{\det}$ is the reproducing kernel Hilbert space with the reproducing kernel $\det K$. 
\end{lemma}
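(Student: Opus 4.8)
The plan is to work entirely on the ``feature space'' side of the two Kolmogorov decompositions and to transport the density hypothesis from $\mathsf{H}$ to $\mathsf{H}_{\det}$ through the wedge construction already used in the proof of Lemma \ref{lem:2.5}. Writing $K(z,w) = H(z)H(w)^*$ with $H(z) = (H_1(z),\ldots,H_r(z))^{\operatorname{tr}}$, I would set $\Phi(w) := H(w)^*$, so that $\Phi(w)e_j = H_j(w)^* \in \mathcal{X}$. The assignment $K(\cdot,w)\zeta \mapsto \Phi(w)\zeta$ extends to an isometry $V$ of $\mathsf{H}$ onto $\mathcal{X}_0 := \overline{\operatorname{span}}\{\Phi(w)e_j : w\in\Omega,\, 1\le j\le r\}$, under which point evaluation reads $f(z) = H(z)(Vf)$. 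Similarly, since $\det K(z,w) = \langle F(w), F(z)\rangle$ for the decomposable vectors $F(w) = \Phi(w)e_1\wedge\cdots\wedge\Phi(w)e_r \in \bigwedge^r\mathcal{X}$, the map $\det K(\cdot,w)\mapsto F(w)$ extends to an isometry $V_{\det}$ of $\mathsf{H}_{\det}$ onto $\mathcal{Y}_0 := \overline{\operatorname{span}}\{F(w): w\in\Omega\}$, with evaluation $h(z) = \langle V_{\det}h, F(z)\rangle$.

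Next I would record the two facts that drive the argument. First, translating the hypothesis: $\mathbb{C}[\boldsymbol z]\otimes\mathbb{C}^r$ dense in $\mathsf{H}$ is equivalent to saying that the vectors $\xi_{p,j} := V(p\otimes e_j)$, which by the evaluation formula satisfy $H(z)\xi_{p,j} = p(z)e_j$, i.e. $\langle \xi_{p,j}, \Phi(z)e_i\rangle = p(z)\delta_{ij}$, have dense linear span in $\mathcal{X}_0$. Second, for any $u\in\bigwedge^r\mathcal{X}$ the function $q_u(z) := \langle u, F(z)\rangle$ lies in $\mathsf{H}_{\det}$ and corresponds under $V_{\det}$ to $Pu$, where $P$ is the orthogonal projection of $\bigwedge^r\mathcal{X}$ onto $\mathcal{Y}_0$ (immediate from $F(z)\in\mathcal{Y}_0$). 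The key computation I would then carry out is that for a decomposable $u = \xi_{p_1,i_1}\wedge\cdots\wedge\xi_{p_r,i_r}$ one gets $q_u(z) = \det\big(p_a(z)\,\delta_{i_a,b}\big)_{a,b=1}^r$, which equals $\operatorname{sgn}(\sigma)\,p_1(z)\cdots p_r(z)$ when $(i_1,\ldots,i_r)$ is a permutation $\sigma$ and is $0$ otherwise; in every case $q_u$ is a \emph{scalar polynomial}, so $Pu = V_{\det}q_u$ lies in $V_{\det}(\mathbb{C}[\boldsymbol z])$.

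With these in hand I would finish as follows. Each $\Phi(w)e_j$ lies in $\mathcal{X}_0$ and is therefore a limit of finite linear combinations of the $\xi_{p,i}$; since the wedge map $(v_1,\ldots,v_r)\mapsto v_1\wedge\cdots\wedge v_r$ is bounded and multilinear, expanding shows that $F(w) = \Phi(w)e_1\wedge\cdots\wedge\Phi(w)e_r$ lies in the closed span $\mathcal{W}$ of the decomposable vectors $\xi_{p_1,i_1}\wedge\cdots\wedge\xi_{p_r,i_r}$. Applying the continuous projection $P$ and using the previous paragraph, $F(w) = PF(w)\in P(\mathcal{W})\subseteq \overline{\operatorname{span}}\,V_{\det}(\mathbb{C}[\boldsymbol z])$. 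As this holds for every $w\in\Omega$, I conclude $\mathcal{Y}_0 = \overline{\operatorname{span}}\{F(w)\}\subseteq \overline{\operatorname{span}}\,V_{\det}(\mathbb{C}[\boldsymbol z])$, and since $V_{\det}$ maps $\mathsf{H}_{\det}$ isometrically onto $\mathcal{Y}_0$, the polynomials $\mathbb{C}[\boldsymbol z]$ are dense in $\mathsf{H}_{\det}$.

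The step I expect to be the main obstacle is not any single computation but the conceptual point that the decomposable wedges $\xi_{p_1,i_1}\wedge\cdots\wedge\xi_{p_r,i_r}$ built from the polynomial vectors need not themselves lie in $\mathcal{Y}_0$; the determinant identity for $q_u$ is exactly what lets $P$ send them back into the polynomial subspace of $\mathsf{H}_{\det}$, and one must justify the multilinear expansion and the passage to closed spans via boundedness of the wedge map and of $P$. I would also pin down the conjugation and ordering conventions in the two Kolmogorov decompositions so that $\det K(z,w) = \langle F(w), F(z)\rangle$ and the evaluation formulas are consistent before reading off the determinant computation; this is routine but should be fixed at the outset.
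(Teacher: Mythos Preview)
Your proof is correct and follows essentially the same strategy as the paper's: both realize $\mathsf{H}_{\det}$ through an $r$-fold exterior power (the paper works directly in $\bigwedge^r\mathsf{H}\subset\mathsf{H}^{\otimes r}$, while you work in $\bigwedge^r\mathcal{X}$ via the Kolmogorov decomposition from Lemma~\ref{lem:2.5}) and then push polynomial density through the continuity and multilinearity of the wedge product. The only noteworthy difference is that you introduce the orthogonal projection $P$ onto $\mathcal{Y}_0$ to land back in $\mathsf{H}_{\det}$, whereas the paper asserts directly that wedges $f\wedge g$ already lie in the closed span of the ``diagonal'' kernel wedges $K(\cdot,w)e_1\wedge K(\cdot,w)e_2$; your handling of this step is a bit more explicit, but the underlying idea is the same.
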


\begin{proof}
Since the similar line of arguments as in the case of $ r = 2 $ works for any $ r \in \mathbb{ N } $, here, for simplicity, we only present the proof of the statement for $ r = 2 $. We begin by pointing out that $\bigwedge^2 \mathsf{H}$ is the closed linear span of the set $\{K(\cdot, w)e_1 \wedge K(\cdot, w)e_2 : w \in \Omega\}$ in $\mathsf{H} \otimes \mathsf{H}$. 

First, we claim that $f \wedge g \in \bigwedge^2 \mathsf{H}$ for any $f, g \in \mathsf{H}$. Indeed, if $f, g \in \mathsf{H}$, there exist sequences $\{x_n\}$, $\{y_n\}$ in the subspace generated by the set $\{K(\cdot, w)e_i : w \in \Omega, i = 1, 2\}$ such that $x_n \to f$ and $y_n \to g$. It is then easily verified that $x_n \wedge y_n \in \bigwedge^2 \mathsf{H}$ and $x_n \wedge y_n \to f \wedge g$ verifying that $f \wedge g \in \wedge^2 \mathsf{H}$. 
Thus, $\{f \wedge g : f, g \in \mathsf{H}\} \subset \wedge^2 \mathsf{H}$. In fact, it is dense in $\bigwedge^2 \mathsf{H}$. This, in particular, implies that $\{p \wedge q : p, q \in \mathbb C[z] \otimes \mathbb C^2\}$ is densely contained in $\wedge^2 \mathsf{H}$ since the set $\mathbb C[z] \otimes \mathbb C^2$ is dense in $\mathsf{H}$.

Now recalling the definition of the canonical Hermitian product on $ \bigwedge^2 \mathsf{ H } $ induced from $ \mathsf{ H } $, it can be seen, for $f, g \in \mathsf{H}$, that  
$$\left\langle f \wedge g, \frac{1}{2} K(\cdot, w)e_1 \wedge K(\cdot, w)e_2 \right\rangle = f(w) \wedge g(w), \hspace{0.1in} w \in \Omega .$$
Since $\{f \wedge g : f, g \in \mathsf{H}\}$ is dense in $ \bigwedge^2 \mathsf{ H } $, it shows that the family $\{\frac 1 2 K(\cdot, w)e_1 \wedge K(\cdot, w)e_2 : w \in \Omega\}$ has the reproducing property. Consequently, the reproducing kernel of $\bigwedge^2 \mathsf{H}$ can be computed as follows.
\begin{flalign}\label{eqn repro det}
 \nonumber\left\langle \frac{1}{2} K(\cdot, w)e_1 \wedge K(\cdot, w)e_2, \frac{1}{2} K(\cdot, z)e_1 \wedge K(\cdot, z)e_2 \right\rangle
 \nonumber &= \frac{1}{2} K(z, w)e_1 \wedge K(z, w)e_2\\
  &= \frac{1}{2} \det K(z, w).
\end{flalign}
This proves that the set of all polynomials $\mathbb C[z]$ is dense in the Hilbert space $\mathsf{H}_{\det}$.
\end{proof}

\begin{lemma}\label{lem:2.7}
If the multiplication operators by the coordinate functions are bounded on $\mathsf{H}$, then so are the multiplication operators by the coordinate functions on the Hilbert space $\mathsf{H}_{\det}$ where $\mathsf{H}_{\det}$ is the reproducing kernel Hilbert space with the reproducing kernel $\det K$. 
\end{lemma}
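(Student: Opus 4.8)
The plan is to realize $\mathsf{H}_{\det}$ as the antisymmetric tensor power $\bigwedge^r \mathsf{H}$, exactly as in the proof of Lemma~\ref{lem:2.6} (stated there for $r=2$ but valid verbatim for arbitrary $r$), and then to transport the coordinate multiplication operators across this identification. Recall from that proof that the assignment $f_1 \wedge \cdots \wedge f_r \mapsto \big(w \mapsto \det[f_1(w), \ldots, f_r(w)]\big)$, where $[f_1(w), \ldots, f_r(w)]$ is the $r \times r$ matrix whose $j$-th column is $f_j(w) \in \mathbb{C}^r$, extends to a unitary $\Phi$ from $\bigwedge^r \mathsf{H}$ onto the reproducing kernel Hilbert space whose kernel is $\tfrac{1}{r!}\det K$. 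Since rescaling a reproducing kernel by a positive constant leaves the underlying function space and its multiplication operators unchanged, it suffices to prove that multiplication by each $z_i$ is bounded on $\bigwedge^r \mathsf{H}$, and to this end I would exhibit the corresponding operator as the restriction of a manifestly bounded operator on $\mathsf{H}^{\otimes r}$.

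First I would introduce the operator $\Gamma_i := \sum_{k=1}^{r} I^{\otimes(k-1)} \otimes M_{z_i} \otimes I^{\otimes(r-k)}$ on $\mathsf{H}^{\otimes r}$, where $M_{z_i}$ is the (bounded) multiplication operator on $\mathsf{H}$. As a finite sum of tensor products of bounded operators, $\Gamma_i$ is bounded with $\norm{\Gamma_i} \le r\,\norm{M_{z_i}}$. The decisive structural point is that $\Gamma_i$ commutes with every permutation operator $P_\sigma$ on $\mathsf{H}^{\otimes r}$: conjugating the $k$-th summand by $P_\sigma$ simply relabels it as the $\sigma(k)$-th summand, so the sum is preserved. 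Hence $\Gamma_i$ commutes with the antisymmetrization projection and therefore leaves the closed subspace $\bigwedge^r \mathsf{H}$ invariant, so its restriction $\Gamma_i\big|_{\bigwedge^r \mathsf{H}}$ is bounded.

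Next I would verify the intertwining identity $M_{z_i}^{\det}\,\Phi = \Phi\,\big(\tfrac{1}{r}\Gamma_i\big|_{\bigwedge^r \mathsf{H}}\big)$, where $M_{z_i}^{\det}$ denotes multiplication by $z_i$ on $\mathsf{H}_{\det}$. By multilinearity of the determinant in its columns, scaling the $k$-th column of $[f_1(w), \ldots, f_r(w)]$ by $z_i(w) = w_i$ multiplies the determinant by $w_i$; thus $\Phi$ carries each summand $f_1 \wedge \cdots \wedge (M_{z_i}f_k) \wedge \cdots \wedge f_r$ to the \emph{same} scalar function $w \mapsto w_i\,\det[f_1(w), \ldots, f_r(w)]$. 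Averaging the $r$ identical contributions gives $\Phi\big(\tfrac{1}{r}\Gamma_i(f_1 \wedge \cdots \wedge f_r)\big) = w_i\cdot\Phi(f_1 \wedge \cdots \wedge f_r) = M_{z_i}^{\det}\Phi(f_1 \wedge \cdots \wedge f_r)$, and since the decomposable wedges are total in $\bigwedge^r \mathsf{H}$, the identity holds throughout. Consequently $M_{z_i}^{\det}$ is unitarily equivalent to the restriction of a bounded operator, hence bounded, with $\norm{M_{z_i}^{\det}} \le \norm{M_{z_i}}$; applying this to each $i = 1, \ldots, m$ finishes the proof.

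The only genuine obstacle is the invariance of $\bigwedge^r \mathsf{H}$ under $\Gamma_i$. A single-slot multiplication $I^{\otimes(k-1)} \otimes M_{z_i} \otimes I^{\otimes(r-k)}$ does \emph{not} preserve the antisymmetric subspace, so one cannot simply restrict it; it is precisely the symmetric sum $\Gamma_i$ --- equivalently, the fact that after antisymmetrization all $r$ single-slot multiplications coincide --- that restores invariance and produces the bounded operator implementing $M_{z_i}^{\det}$.
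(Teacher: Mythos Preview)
Your proof is correct and uses the same antisymmetric-tensor realization of $\mathsf{H}_{\det}$ as the paper, but the operator-theoretic step is organized differently. The paper works on the adjoint side: since $M_i^* K(\cdot,w)\zeta = \bar w_i\,K(\cdot,w)\zeta$ for every $\zeta$, the single-slot operator $M_i^* \otimes I$ already acts as the scalar $\bar w_i$ on each spanning vector $K(\cdot,w)e_1 \wedge \cdots \wedge K(\cdot,w)e_r$, so no symmetrization is needed to see that $\bigwedge^r\mathsf{H}$ is invariant; transporting via the unitary $U$ then exhibits the adjoint of $M_{z_i}^{\det}$ as a restriction of a bounded operator. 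You instead stay on the multiplication side, where a single-slot $M_{z_i}$ does not preserve the antisymmetric subspace, and recover invariance by passing to the permutation-invariant sum $\Gamma_i$. Both routes yield the same bound $\norm{M_{z_i}^{\det}} \le \norm{M_{z_i}}$; the paper's is a touch shorter because the eigenvector property of the kernel vectors does the symmetrization for free, while yours has the virtue of being entirely elementary about the tensor algebra and never invoking the reproducing property beyond what was already established in Lemma~\ref{lem:2.6}.
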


\begin{proof}
First, note that $\bigwedge^r \mathsf{H}$ is the closed linear span of $ \{ K ( \cdot, w ) e_1 \wedge \cdots \wedge K ( \cdot, w ) e_r : w \in \Omega \} $ in $\mathsf{H} \otimes \cdots \otimes \mathsf{H}$ ($ r $ times). For each $1 \leq i \leq m$, let $M_{i}$ denote the multiplication by the coordinate function $z_i$ on the Hilbert space $\mathsf{H}$. A direct computation yields 
 \begin{equation}\label{bdd 1}
  \left(M_{i}^* \otimes I \right)  \left(K ( \cdot, w ) e_1 \wedge \cdots \wedge K ( \cdot, w ) e_r \right) = \bar w_i \left(K ( \cdot, w ) e_1 \wedge \cdots \wedge K ( \cdot, w ) e_r \right)
 \end{equation}
 for every $w = (w_1, \cdots, w_m)$ in $\Omega$ and $1 \leq i \leq m$. This, in particular, implies that $\bigwedge^r \mathsf{H}$ is an invariant subspace of $M_{i}^* \otimes I$ for each $i$.
 
 Now it follows from Equation \eqref{eqn repro det} that the map $U : \bigwedge^r \mathcal{H} \to \mathcal{H}_{\det}$ given by 
 $$U\left(K ( \cdot, w ) e_1 \wedge \cdots \wedge K ( \cdot, w ) e_r\right) = \sqrt{r}\det K(\cdot, w),\,\,w \in \Omega,$$
defines a unitary operator. Moreover, it can be seen that 
\begin{equation}\label{bdd 2}
 U \left(M_{i}^* \otimes I \right) U^* \det K(\cdot, w) = \bar w_i \det K(\cdot, w), 1 \leq i \leq m .
 \end{equation}
 This identity, however, implies that $U \left(M_{i}^* \otimes I \right) U^*$ is the adjoint of the multiplication operator by the coordinate function $z_i$ on the Hilbert space $\mathsf{H}_{\det}$. In particular, multiplication operators by the coordinate functions are bounded on $\mathsf{H}_{\det}$.
\end{proof}

Given a $ G $-homogeneous analytic Hilbert module with the reproducing kernel $ K $ and $ \lambda \in \mathcal{ W }_{ \Omega, a } ( K ) $, we now show that so is the analytic Hilbert module associated to the kernel $ \det \mathbb{ K }^{ [ \lambda ] } $. 

\begin{theorem}\label{det theorem}
If $\mathcal{ H }$ is a $G$-homogeneous analytic Hilbert module with the reproducing kernel $ K : \Omega \times \Omega \to \mathbb C$ and $ \lambda \in \mathcal{ W }_{ \Omega, a } ( K ) $, then the Hilbert space $ \mathbb{H}^{[\lambda]}_{\det} $ determined by the kernel $ \det \mathbb K^{[\lambda]}$ is also an analytic Hilbert module that is $ G $-homogeneous. 
\end{theorem}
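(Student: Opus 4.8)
The plan is to verify separately the two assertions of the theorem: that $(\mathbb{H}^{[\lambda]}_{\det}, \det \mathbb{K}^{[\lambda]})$ is an analytic Hilbert module, and that it is $G$-homogeneous. For the first assertion, I would apply Lemmas \ref{lem:2.5}, \ref{lem:2.6} and \ref{lem:2.7} to the matrix-valued reproducing kernel $\mathbb{K}^{[\lambda]}$, taking $r = m$ and $\mathsf{H} = \mathbb{H}^{[\lambda]}$. Recall that $\mathbb{K}^{[\lambda]}$ is a non-negative definite $\mathcal{M}_m(\mathbb{C})$-valued kernel, so Lemma \ref{lem:2.5} shows that $\det \mathbb{K}^{[\lambda]}$ is a non-negative definite scalar kernel, holomorphic in the first variable and anti-holomorphic in the second; hence it determines a reproducing kernel Hilbert space $\mathbb{H}^{[\lambda]}_{\det}$ of holomorphic functions. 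Proposition \ref{homog of K^[lambada]} supplies the two hypotheses needed to finish: the multiplication operators by the coordinate functions are bounded on $\mathbb{H}^{[\lambda]}$, and $\mathbb{C}[\boldsymbol{z}] \otimes \mathbb{C}^m$ is dense in it. Lemma \ref{lem:2.7} then transfers boundedness of the coordinate multiplications to $\mathbb{H}^{[\lambda]}_{\det}$, while Lemma \ref{lem:2.6} transfers density of the polynomials $\mathbb{C}[\boldsymbol{z}]$ to $\mathbb{H}^{[\lambda]}_{\det}$. Together these three facts say exactly that $(\mathbb{H}^{[\lambda]}_{\det}, \det \mathbb{K}^{[\lambda]})$ is an analytic Hilbert module.

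For the homogeneity, I would pass to determinants in the quasi-invariance identity \eqref{eqn:2.7.1} for $\mathbb{K}^{[\lambda]}$. Since the factor $J(g, \cdot)^{\lambda+2}$ is scalar while $Dg$ and $\mathbb{K}^{[\lambda]}$ are $m \times m$, and since $\det(\overline{A}) = \overline{\det A}$ and $\det(cA) = c^m \det A$ for an $m \times m$ matrix $A$, taking determinants of both sides of \eqref{eqn:2.7.1} yields
\[
\det \mathbb{K}^{[\lambda]}(z,w) = \hat{J}(g,z)\, \det \mathbb{K}^{[\lambda]}(g(z), g(w))\, \overline{\hat{J}(g,w)}, \quad \hat{J}(g,z) := J(g,z)^{m(\lambda+2)} \det Dg(z),
\]
which rearranges to the quasi-invariance relation of Theorem \ref{thm:2.1-Kquasi} for the scalar kernel $\det \mathbb{K}^{[\lambda]}$ with multiplier $\hat{J}$. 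The function $\hat{J}(g, \cdot)$ is non-vanishing and holomorphic on $\Omega$ for each fixed $g$, because $J(g, \cdot)$ is non-vanishing and holomorphic and $g$ is a biholomorphism, so that $\det Dg$ is nowhere vanishing and holomorphic. Theorem \ref{thm:2.1-Kquasi} then gives the $G$-homogeneity of $(\mathbb{H}^{[\lambda]}_{\det}, \det \mathbb{K}^{[\lambda]})$, completing the proof.

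I do not expect a serious obstacle here, since the three lemmas are designed precisely to feed this argument: the proof is essentially an assembly of them together with the determinant computation. The only points requiring a little care are bookkeeping ones, namely confirming that all the hypotheses of Lemmas \ref{lem:2.6} and \ref{lem:2.7} are genuinely provided by Proposition \ref{homog of K^[lambada]} (with $r = m$), and checking that the scalar factors are pulled out of the determinant with the correct exponent $m(\lambda + 2)$. One may also remark that $\hat{J}$ is in fact again a projective cocycle, being a product of powers of the cocycle $J$ and the Bergman-type cocycle $g \mapsto \det Dg$, although Theorem \ref{thm:2.1-Kquasi} requires only that $\hat{J}$ be a non-vanishing holomorphic multiplier.
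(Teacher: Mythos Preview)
Your proposal is correct and follows essentially the same route as the paper: apply Lemmas \ref{lem:2.5}, \ref{lem:2.6}, \ref{lem:2.7} to $\mathbb{K}^{[\lambda]}$ (using Proposition \ref{homog of K^[lambada]} to supply their hypotheses) to establish that $\mathbb{H}^{[\lambda]}_{\det}$ is an analytic Hilbert module, and then take determinants in \eqref{eqn:2.7.1} to obtain quasi-invariance with the multiplier $J(g,\cdot)^{m(\lambda+2)}\det Dg(\cdot)$. The paper's proof is identical in structure and in the choice of multiplier.
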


\begin{proof}
Lemma \ref{lem:2.5} and Lemma \ref{lem:2.6} yield the proof of the fact that $ \det \mathbb{ K }^{ [ \lambda ] } $ is a non-negative definite kernel on $ \Omega $ and $ \mathbb C [\boldsymbol{z}] $ is dense in $\mathbb{H}^{[\lambda]}_{\det}$, respectively. 

The proof of the fact that the multiplication operators by the coordinate functions on $ \mathbb{H}^{[\lambda]}_{\det} $ are bounded follows form Lemma \ref{lem:2.7}.

Finally, the homogeneity of $ \mathbb{H}^{[\lambda]}_{\det} $ follows from Proposition \ref{homog of K^[lambada]}. Indeed, taking determinant in both sides of Equation \eqref{eqn:2.7.1}, we obtain
\begin{eqnarray*}
\det \mathbb{ K }^{ [ \lambda ] } ( z, w ) & = & J ( g, z )^{ m ( \lambda + 2 ) } K ( g ( z ), g ( w ) )^{ m ( \lambda + 2 ) } ( \overline{ J ( g, w ) } )^{ m ( \lambda + 2 ) } \times\\
&  & \phantom{symmetrizedsymsymm} \det D g ( z ) \det \mathbb{ K } ( g ( z ), g ( w ) ) \overline{ \det D g ( w ) } \\
& = & \big ( J ( g, z )^{ m ( \lambda + 2 ) } \det D g ( z ) \big ) \det \mathbb{ K }^{ [ \lambda ] } ( g ( z ), g ( w ) ) \overline{ \big ( J ( g, w )^{ m ( \lambda + 2 ) } \det D g ( w ) \big ) }
\end{eqnarray*}
verifying that $ \det \mathbb{ K }^{ [ \lambda ] } $ is quasi-invariant.
\end{proof}

\section{Homogeneous analytic Hilbert modules over symmetrized bidisc}
Let $\mathbb D\subset \mathbb C$ be the open unit disc and $\mathbb T$ be the unit circle.  We let M\"{o}b denote the bi-holomorphic automorphism group of $\mathbb D$. Thus, $\varphi\in \text{M\"{o}b}$ is taken to be of the form: $\varphi_{t, \alpha}(z) = t \tfrac{\alpha - z}{1-\overbar{\alpha} z}$, $t\in \mathbb T$ and $\alpha\in \mathbb D$. The unique involution interchanging $0$ and $\alpha$ is of the form $\varphi_{1, \alpha}$ and we set $\varphi_\alpha:=\varphi_{1, \alpha}$, $\alpha\in \mathbb D$. 
Also, $\{\varphi_{t,0}:t\in \mathbb T\}$ is the stabilizer subgroup of $0$ in M\"{o}b. For brevity and when there is no possibility of confusion, we set $\varphi_t:= \varphi_{t,0}$, $t\in \mathbb T$. 
 
Let $\mathbb G_2:=\{(z_1+z_2,z_1 z_2)\mid (z_1, z_2) \in \mathbb D^2\}$ be the symmetrized bi-disc. Thus $\mathbb G_2$ is the image of the bi-disc under the symmetrization map $\boldsymbol s: \mathbb C^2 \to \mathbb C^2$, $s(z_1,z_2) := (z_1 + z_2, z_1 z_2)$. 
Evidently, if $f:\mathbb D^2 \to \mathbb C$ is a holomorphic function which is symmetric, i.e., $f(z_1,z_2) = f(z_2,z_1)$, then it defines a  holomorphic function $\tilde f :\mathbb G_2\to \mathbb C$ by the formula: $\tilde f(\boldsymbol s(z_1, z_2)) = f(z_1,z_2)$, $z_1,z_2 \in \mathbb D$. It follows that any
holomorphic function $h:\mathbb D\to \mathbb D$ defines a new 
holomorphic function $\tilde{h}: \mathbb G_2 \to \mathbb G_2$ by setting $\tilde{h}\left(\boldsymbol s\left(z_{1},z_{2}\right)\right) = \boldsymbol s\left(h\left(z_{1}\right), h\left(z_{2}\right)\right)$.  Also, $\tilde{h}(\mathcal{R}) \subseteq \mathcal{R}$, where $\mathcal{R} =\{(2z, z^2)\mid z\in \mathbb D\}$ is the image under the symmetrization map $\boldsymbol s$ of the diagonal set $\triangle:=\{(z,z)\mid z\in \mathbb D\}\subset \mathbb D^2$. In particular, $\tilde{\varphi}$, $\varphi\in \mbox{M\"{o}b}$, defines a holomorphic function on $\mathbb{G}_2$. Moreover, $\tilde{\varphi}^{-1} = \widetilde{\varphi^{-1}}$ and  therefore, $\tilde{\varphi}$ is an automorphism of $\mathbb G_2$.
In the paper \cite{JP}, Jarnicki and Pflug prove that 
\begin{equation}
\text{Aut}(\mathbb{G}_{2}) =  \{\tilde{\varphi}:\varphi \in  \mbox{M\"{o}b}\}, 
\end{equation}
where $\text{Aut}(\mathbb{G}_{2})$ is the group of the bi-holomorphic automorphisms of the symmetrized bi-disc $\mathbb G_2$. This set of maps acts transitively on $\mathcal{R}$. The action of the bi-holomorphic automorphism group $\text{Aut}(\mathbb{G}_{2})$ on $\mathbb G_2$, however, is not transitive.  It is not hard to check that the set $\Lambda = \{(r, 0) : 0 \leq r < 1\}$ is a fundamental set for this action, see \cite[Theorem 2.1]{BBM}.

Let $ \mathcal{ H } $ be an analytic Hilbert module of holomorphic functions on $ \mathbb{ G }_2 $ with the reproducing kernel $ K : \mathbb{ G }_2 \times \mathbb{ G }_2 \to \mathbb{ C } $. In this section, we find a necessary and sufficient condition for $ \mathcal{ H } $ to be Aut$(\mathbb G_2)$-homogeneous in terms of the curvature matrix of the line bundle $ \mathsf{ L }_{ \mathcal{ H } } $ associated to $ \mathcal{ H } $ using Theorem \ref{thm:2.2} with $ \Omega = \mathbb{ G }_2 $ and $ G = $ Aut$(\mathbb G_2)$. We begin by computing $\mbox{stab}(r, 0)$ for $0 < r < 1$. 

\begin{lemma}\label{fixing elements in aut}
For $0 < r < 1$, $\tilde{\varphi}_r \in$ Aut$(\mathbb G_2)$ is the only non-trivial element in $\rm{stab}(r, 0)$, where $\varphi_r$ is the involution in M\"ob mapping $r$ to $0$. 
\end{lemma}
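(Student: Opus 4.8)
The plan is to translate the problem from $\mathrm{Aut}(\mathbb{G}_2)$ back to $\mbox{M\"{o}b}$ using the isomorphism $\varphi \mapsto \tilde{\varphi}$, and then reduce the fixing condition to a purely combinatorial statement about how $\varphi$ permutes the two-point set $\{0,r\}$. First I would observe that $(r,0) = \boldsymbol{s}(r,0)$, i.e. it is the image under the symmetrization map of the unordered pair $\{0,r\}\subset\mathbb{D}$. Since $\tilde{\varphi}(\boldsymbol{s}(z_1,z_2)) = \boldsymbol{s}(\varphi(z_1),\varphi(z_2))$ and the fibres of $\boldsymbol{s}$ are exactly the orbits of the coordinate-swap (elementary symmetric functions determine the multiset of roots), the condition $\tilde{\varphi}(r,0) = (r,0)$ is equivalent to the equality of unordered pairs $\{\varphi(0),\varphi(r)\} = \{0,r\}$. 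Thus any $\tilde{\varphi}\in\mathrm{stab}(r,0)$ forces $\varphi$ either to fix both $0$ and $r$, or to interchange them.

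Next I would dispose of the two cases using the explicit form $\varphi_{t,\alpha}(z) = t\tfrac{\alpha - z}{1 - \overline{\alpha}z}$. If $\varphi(0)=0$ then $t\alpha = 0$, so $\alpha = 0$ and $\varphi$ is a rotation; imposing $\varphi(r)=r$ with $r\neq 0$ then forces $\varphi = \mathrm{id}$. If instead $\varphi(r)=0$, the vanishing of the numerator gives $\alpha = r$, and then $\varphi(0)=r$ reads $tr = r$, so $t=1$ (here I use $0<r<1$, in particular $r\neq 0$); hence $\varphi = \varphi_{1,r} = \varphi_r$. Recalling that $\varphi_r$ is precisely the involution interchanging $0$ and $r$, this shows that the only candidates for $\varphi$ are $\mathrm{id}$ and $\varphi_r$, whence $\mathrm{stab}(r,0) = \{\mathrm{id},\tilde{\varphi}_r\}$ and $\tilde{\varphi}_r$ is its only non-trivial element.

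The one point requiring care — and the main (though mild) obstacle — is the injectivity of $\varphi \mapsto \tilde{\varphi}$, which I use twice: to guarantee that distinct Möbius maps yield distinct automorphisms (so that $\mathrm{id}$ and $\varphi_r$ produce exactly the two stabilizer elements found, and in particular that $\tilde{\varphi}_r \neq \mathrm{id}$), and to ensure that the list obtained on the $\mbox{M\"{o}b}$ side is the complete list on the $\mathrm{Aut}(\mathbb{G}_2)$ side. This injectivity is immediate from restricting to $\mathcal{R} = \{(2z,z^2) : z\in\mathbb{D}\}$: since $\tilde{\varphi}(2z,z^2) = (2\varphi(z),\varphi(z)^2)$, the automorphism $\tilde{\varphi}$ already determines $\varphi$. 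Everything else is a short computation with the standard parametrization of $\mbox{M\"{o}b}$.
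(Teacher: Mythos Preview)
Your proof is correct and follows essentially the same approach as the paper's: both reduce $\tilde{\varphi}(r,0)=(r,0)$ to the statement that $\varphi$ permutes the set $\{0,r\}$ (the paper does this via the explicit sum--product equations $\varphi(r)+\varphi(0)=r$, $\varphi(r)\varphi(0)=0$, while you phrase it as equality of unordered pairs via the fibres of $\boldsymbol{s}$), and then dispose of the two cases using the standard parametrization of M\"ob. Your added remark on the injectivity of $\varphi\mapsto\tilde{\varphi}$ is a nice touch that the paper leaves implicit.
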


\begin{proof}
A direct computation verifies that $\tilde{\varphi}_r(r, 0) = (r, 0)$. Now, suppose $\tilde{\varphi}$ is such that $\tilde{\varphi}(r,0) = (r, 0)$. Note that 
$$\tilde{\varphi}(r, 0) = (\varphi(r) + \varphi(0), \varphi(r) \varphi(0)).$$
By our assumption, $\varphi(r) + \varphi(0) = r$ and $\varphi(r)\varphi(0) = 0$. The condition $\varphi(r)\varphi(0) = 0$ implies that either $\varphi(r) = 0$ or $\varphi(0) = 0$.
\begin{description}
    \item[\rm Case (i)] In this case, assume that $\varphi(0) = 0$. Then, we must have $\varphi(w) = e^{i \theta} w$, $w \in \mathbb D$. This implies that $\varphi(r) + \varphi(0) = e^{i \theta} r$. On the other hand, we have $\varphi(r) + \varphi(0) = r$. This implies that $e^{i \theta} = 1$. This proves that $\varphi$ is identity.

    \item[\rm Case (ii)] In this case, assume that $\varphi(r) = 0$. Then, we must have $\varphi(w) = e^{i \theta} \frac{w - r}{1 - rw}$, $w \in \mathbb D$. Thus, $\varphi(r) + \varphi(0) = -e^{i\theta}r$. Since $\varphi(r) + \varphi(0) = r$, it follows that $e^{i \theta} = -1$. This proves that $\varphi = \varphi_r$.
\end{description}
Combining the two cases, we observe that $\tilde{\varphi}_r \in$ Aut$(\mathbb G_2)$ is the only element other than the identity element in $\mbox{stab}(r, 0)$.  
\end{proof}

\begin{lemma}\label{curvature condition}
 Let $\mathcal{H}$ be an analytic Hilbert module with the reproducing kernel $K : \mathbb G_2 \times \mathbb G_2 \to \mathbb C$. Suppose $\mathbb K = \left(\!\!\left( \mathbb K_{i\bar j}\right)\!\!\right)$, $\mathbb K_{i\bar j} := \partial_i \bar{\partial}_j \log K$, be the curvature matrix of the associated Hermitian holomorphic line bundle $\mathsf L_{\mathcal{H}}$.    
 Then $\mathbb K$ satisfies the following Equation
\begin{equation}\label{eqn:curv_trans_funda}
    \mathbb K(r, 0) = D \tilde{\varphi}(r, 0)^{ \operatorname{ tr } } \mathbb K(r, 0) \overbar{D \tilde{\varphi}(r, 0)}
\end{equation}
for all $0 \leq r < 1$ and $\varphi \in$ M\"ob such that $\tilde{\varphi}(r, 0) = (r, 0)$ if and only if 
\begin{equation}\label{eqn:curv_on_fund}
  \mathbb K_{1\bar 2}(r, 0) = \mathbb K_{2 \bar 1} (r, 0)\,\,\mbox{and}\,\,r(r^2 - 2)\mathbb K_{1\bar1}(r, 0) = 2\mathbb K_{1 \bar2}(r, 0) + r\mathbb K_{2\bar 2}(r, 0), 
\end{equation}
for all $r \in [0, 1)$.
\end{lemma}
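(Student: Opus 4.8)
The plan is to reduce the family of transformation identities \eqref{eqn:curv_trans_funda} to a single explicit matrix identity at each point of the fundamental set and then read off the scalar relations \eqref{eqn:curv_on_fund} by comparing entries. By Lemma \ref{fixing elements in aut}, for $0<r<1$ the only nontrivial element of $\mathrm{stab}(r,0)$ is $\tilde{\varphi}_r$, so for such $r$ the identities \eqref{eqn:curv_trans_funda} collapse to the single equation coming from $\varphi=\varphi_r$ (the identity element contributes nothing). Since $(r,0)$ with $r>0$ lies off the diagonal locus $\mathcal{R}$, the symmetrization map $\boldsymbol{s}$ is a local biholomorphism near the preimages $(r,0),(0,r)\in\mathbb{D}^2$, and I would compute $D\tilde{\varphi}_r(r,0)$ by differentiating the relation $\tilde{\varphi}_r\circ\boldsymbol{s}=\boldsymbol{s}\circ(\varphi_r\times\varphi_r)$ at a preimage. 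Using $\varphi_r(z)=\tfrac{r-z}{1-rz}$ with $\varphi_r'(r)=-\tfrac{1}{1-r^2}$ and $\varphi_r'(0)=-(1-r^2)$, together with the Jacobians of $\boldsymbol{s}$ at $(r,0)$ and $(0,r)$, the chain rule yields
\[
D\tilde{\varphi}_r(r,0)=\frac{1}{1-r^2}\begin{pmatrix}-1 & r(2-r^2)\\ -r & 1\end{pmatrix},
\]
a real matrix with $A^2=I$ and $\det A=-1$ (consistent with $\tilde{\varphi}_r$ being an involution); one checks the answer is independent of the choice of preimage.

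With $A:=D\tilde{\varphi}_r(r,0)$ real, \eqref{eqn:curv_trans_funda} becomes $A^{\operatorname{tr}}\mathbb{K}(r,0)A=\mathbb{K}(r,0)$, equivalently $M^{\operatorname{tr}}\mathbb{K}(r,0)M=(1-r^2)^2\mathbb{K}(r,0)$ where $M=(1-r^2)A$. Writing out the entries of $\mathbb{K}(r,0)$ and expanding produces four scalar equations. Subtracting the two off-diagonal equations and using $r\beta-1-(1-r^2)^2=-2(1-r^2)^2$ (with $\beta:=r(2-r^2)$) gives $-2(1-r^2)^2\big(\mathbb{K}_{1\bar2}(r,0)-\mathbb{K}_{2\bar1}(r,0)\big)=0$, hence $\mathbb{K}_{1\bar2}(r,0)=\mathbb{K}_{2\bar1}(r,0)$ since $1-r^2\neq0$. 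Substituting $\mathbb{K}_{1\bar2}=\mathbb{K}_{2\bar1}$ into the $(1,1)$-entry equation and dividing by $r$ yields exactly the second relation $r(r^2-2)\mathbb{K}_{1\bar1}=2\mathbb{K}_{1\bar2}+r\mathbb{K}_{2\bar2}$; a direct check, using $1+r\beta+(1-r^2)^2=2$, shows the remaining $(1,2)$, $(2,1)$ and $(2,2)$ equations each reduce to this same relation once $\mathbb{K}_{1\bar2}=\mathbb{K}_{2\bar1}$ is imposed. This establishes the equivalence at each fixed $r\in(0,1)$ in both directions, since the matrix identity is equivalent to the full system of four scalar equations, which in turn is equivalent to the pair \eqref{eqn:curv_on_fund}.

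The point $r=0$ must be handled separately, because the stabilizer jumps: $\tilde{\varphi}(0,0)=(0,0)$ forces $\varphi(0)=0$, so $\mathrm{stab}(0,0)=\{\tilde{\varphi}_t:\varphi_t(z)=tz,\ t\in\mathbb{T}\}$ is the full rotation group rather than a two-element group, and $\boldsymbol{s}$ is branched at $(0,0)$. Here I would avoid the chain rule and use the direct formula $\tilde{\varphi}_t(u,v)=(tu,t^2v)$ on $\mathbb{G}_2$, which gives $D\tilde{\varphi}_t(0,0)=\operatorname{diag}(t,t^2)$. Then \eqref{eqn:curv_trans_funda} becomes $\operatorname{diag}(t,t^2)^{\operatorname{tr}}\mathbb{K}(0,0)\overline{\operatorname{diag}(t,t^2)}=\mathbb{K}(0,0)$, whose off-diagonal entries force $\bar t\,\mathbb{K}_{1\bar2}(0,0)=\mathbb{K}_{1\bar2}(0,0)$ and $t\,\mathbb{K}_{2\bar1}(0,0)=\mathbb{K}_{2\bar1}(0,0)$ for all $t\in\mathbb{T}$, i.e. $\mathbb{K}_{1\bar2}(0,0)=\mathbb{K}_{2\bar1}(0,0)=0$; conversely these make the identity hold for every $t$. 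This matches \eqref{eqn:curv_on_fund} at $r=0$, where the two relations read $\mathbb{K}_{1\bar2}=\mathbb{K}_{2\bar1}$ and $2\mathbb{K}_{1\bar2}=0$. I expect the main obstacle to be the accurate computation of $D\tilde{\varphi}_r(r,0)$ through the (branched) map $\boldsymbol{s}$ and the bookkeeping needed to confirm that the $(1,2)$, $(2,1)$ and $(2,2)$ entry-equations are genuinely redundant, so that the clean pair \eqref{eqn:curv_on_fund} faithfully encodes the full matrix identity; the one conceptual care point is the change in the stabilizer at $r=0$.
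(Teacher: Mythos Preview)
Your proposal is correct and follows essentially the same approach as the paper: reduce to the single nontrivial stabilizer element $\tilde{\varphi}_r$ for $0<r<1$, compute $D\tilde{\varphi}_r(r,0)$ (your matrix agrees with the paper's $\tfrac{1}{r^2-1}\left(\begin{smallmatrix}1 & r(r^2-2)\\ r & -1\end{smallmatrix}\right)$ up to the obvious sign rewrite), and compare entries. The only notable difference is at $r=0$: the paper obtains $\mathbb{K}_{1\bar2}(0,0)=\mathbb{K}_{2\bar1}(0,0)=0$ by taking the limit $r\to0$ in the relations already derived for $r\in(0,1)$ and then checks this is equivalent to the stabilizer identity there, whereas you compute the full rotation stabilizer $\{\tilde{\varphi}_t\}_{t\in\mathbb{T}}$ and its derivative $\operatorname{diag}(t,t^2)$ directly; your treatment is arguably cleaner since it makes explicit that \emph{all} elements of $\mathrm{stab}(0,0)$ (not just those arising as limits) are accounted for, and you also explicitly verify redundancy of the $(2,2)$ and off-diagonal entry-equations, which the paper leaves implicit.
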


\begin{proof}
 Let $r \in (0, 1)$ be a fixed but arbitrary element. From Lemma \ref{fixing elements in aut}, we know that $\tilde{\varphi}_r$ is the only element other than the identity in Aut$(\mathbb G_2)$ such that $\tilde{\varphi}_r(r, 0) = (r, 0)$. A direct computation gives  
 $$D\tilde{\varphi}_r(r, 0) = \frac{1}{r^2 - 1} \begin{pmatrix}
     1 & r(r^2 - 2)\\
     r & -1
 \end{pmatrix}\,\,\mbox{and}\,\,D\tilde{\varphi}_r(r, 0)^{-1} = \frac{1}{r^2 - 1} \begin{pmatrix}
     1 & r(r^2 - 2)\\
     r & -1
 \end{pmatrix}.$$
Since $D\tilde{\varphi}_r(r, 0) = \overbar{D\tilde{\varphi}_r(r, 0)}$, Equation \eqref{eqn:curv_trans_funda} is transformed to the following Equation
$$\mathbb K(r, 0) D \tilde{\varphi}(r, 0)^{-1} = D \tilde{\varphi}(r, 0)^{ \operatorname{ tr } } \mathbb K(r, 0).$$
Now substituting the value of $D \tilde{\varphi}(r, 0)$ and $D \tilde{\varphi}(r, 0)^{-1}$ in the previous Equation and then equating the $(1, 1)$-entry and $(1, 2)$-entry from both sides, we obtain $\mathbb K_{1\bar 2}(r, 0) = \mathbb K_{2 \bar 1} (r, 0)$ and $r(r^2 - 2)\mathbb K_{1\bar1}(r, 0) = 2\mathbb K_{1 \bar2}(r, 0) + r\mathbb K_{2\bar 2}(r, 0)$, respectively. 

Taking $r \to 0$ to the both sides of $r(r^2 - 2)\mathbb K_{1\bar1}(r, 0) = 2\mathbb K_{1 \bar2}(r, 0) + r\mathbb K_{2\bar 2}(r, 0)$, we obtain $\mathbb K_{1 \bar 2}(0,0) = 0$. Moreover, since $\mathbb K_{1\bar 2}(r, 0) = \mathbb K_{2 \bar 1} (r, 0)$ for all $0 < r < 1$,  $\mathbb K_{2 \bar 1}(0,0) = 0$. The conditions $\mathbb K_{1 \bar 2}(0,0) = 0$ and $\mathbb K_{2 \bar 1}(0,0) = 0$ are equivalent to the equation 
\begin{equation*}
    \mathbb K(0, 0) = D \tilde{\varphi}(0, 0)^{ \operatorname{ tr } } \mathbb K(0, 0) \overbar{D \tilde{\varphi}(0, 0)}
\end{equation*}
whenever $\tilde{\varphi} \in \mbox{Aut}(\mathbb G_2)$ is an element in stab$(0,0)$. This proves that Equation \eqref{eqn:curv_trans_funda} is satisfied for all $0 \leq r < 1$ if and only if Equation \eqref{eqn:curv_on_fund} is satisfied for all $0 \leq r < 1$.  
\end{proof}

We now prove the main result in this subsection. For every $\alpha \in \mathbb D$, let $\varphi_\alpha \in$ M\"ob be the unique involution which maps $\alpha$ to $0$ and for every $t \in \mathbb T$, let $\varphi_t \in$ M\"ob be defined by $\varphi_t(z) = tz$, $z \in \mathbb D$. 

\begin{theorem}\label{thm_characterization}
Let $\mathcal{H}$ be an analytic Hilbert module with the reproducing kernel $K : \mathbb G_2 \times \mathbb G_2 \to \mathbb C$. Suppose $\mathbb K = \left(\!\!\left( \mathbb K_{i\bar j}\right)\!\!\right)$, $\mathbb K_{i\bar j} := \partial_i \bar{\partial}_j \log K$, is the curvature matrix of the associated Hermitian holomorphic line bundle $\mathsf L_{\mathcal{H}}$.   
For any $\alpha, \beta \in \mathbb D$, define $\theta: \mathbb D \times \mathbb D \to \mathbb T$ by setting 
\[\theta(\alpha, \beta) = \begin{cases} \tfrac{\varphi_\alpha(\beta)}{|\varphi_\alpha(\beta)|} & \text{~if~}  \alpha\ne \beta \\ 1 & \text{~if~} \alpha = \beta.\end{cases}\]

Then $\mathcal{H}$ is Aut$(\mathbb G_2)$-homogeneous if and only if $\mathbb K$ is given by 
\begin{multline}\label{eq:1}
\mathbb{K}(\alpha + \beta, \alpha \beta) = \left(D (\tilde\varphi_{\alpha} \circ \tilde \varphi_{\theta(\alpha, \beta)}) (|\varphi_{\alpha}(\beta)|, 0)^{ \operatorname{ tr } }\right)^{-1} \\ \mathbb{K}\left(|\varphi_{\alpha}(\beta)|, 0\right) \overbar{D (\tilde\varphi_{\alpha} \circ \tilde \varphi_{\theta(\alpha, \beta)}) (|\varphi_{\alpha}(\beta)|, 0)}^{-1},  
\end{multline}
for any $\alpha, \beta \in \mathbb D$ and $r(r^2 - 2)\mathbb K_{1\bar1}(r, 0) = 2\mathbb K_{1 \bar2}(r, 0) + r\mathbb K_{2\bar 2}(r, 0)$, $r \in [0, 1)$.
\end{theorem}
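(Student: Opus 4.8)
The plan is to specialize Theorem \ref{thm:2.2} to $\Omega = \mathbb{G}_2$, $G = \operatorname{Aut}(\mathbb{G}_2)$, and the fundamental set $\Lambda = \{(r, 0) : 0 \leq r < 1\}$. That theorem characterizes homogeneity by two demands: that the transformation rule \eqref{eqn:2.4} hold for every $z \in \Lambda$, $w \in \mathbb{G}_2 \setminus \Lambda$ and every $g$ with $g(z) = w$, and that the stabilizer-compatibility \eqref{eqn:2.5} hold on $\Lambda$. Accordingly I would split the proof into two tasks: (i) exhibit, for each $w$, one convenient automorphism $g$ for which \eqref{eqn:2.4} becomes exactly the asserted formula \eqref{eq:1}; and (ii) show that \eqref{eqn:2.5} collapses, through Lemma \ref{fixing elements in aut} and Lemma \ref{curvature condition}, to the single scalar identity in the statement.

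For (i), every point of $\mathbb{G}_2$ is $w = (\alpha + \beta, \alpha\beta) = \boldsymbol{s}(\alpha, \beta)$ for some $\alpha, \beta \in \mathbb{D}$. First I would move $\alpha$ to the origin: since $\varphi_\alpha$ is the involution interchanging $0$ and $\alpha$, the automorphism $\tilde{\varphi}_\alpha$ sends $w$ to $\boldsymbol{s}(0, \varphi_\alpha(\beta)) = (\varphi_\alpha(\beta), 0)$. The surviving coordinate $\varphi_\alpha(\beta)$ is complex, and the rotation by $\theta(\alpha, \beta) = \varphi_\alpha(\beta)/|\varphi_\alpha(\beta)|$ carries $|\varphi_\alpha(\beta)|$ to $\varphi_\alpha(\beta)$, so $\tilde{\varphi}_{\theta(\alpha,\beta)}$ takes the fundamental point $(|\varphi_\alpha(\beta)|, 0) \in \Lambda$ to $(\varphi_\alpha(\beta), 0)$. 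Composing, the automorphism $g := \tilde{\varphi}_\alpha \circ \tilde{\varphi}_{\theta(\alpha, \beta)}$ satisfies $g(|\varphi_\alpha(\beta)|, 0) = w$; this is a one-line check using that $\varphi_\alpha$ is an involution with $\varphi_\alpha(0) = \alpha$. Substituting this $g$ and $z = (|\varphi_\alpha(\beta)|, 0)$ into \eqref{eqn:2.4} yields \eqref{eq:1}. The degenerate case $\alpha = \beta$ (where $\varphi_\alpha(\beta) = 0$) is absorbed by the convention $\theta(\alpha, \beta) = 1$, and the case $w \in \Lambda$ reduces to the stabilizer relation.

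For (ii), fix $z = (r, 0) \in \Lambda$. Whenever two automorphisms satisfy $g(z) = w = h(z)$, their quotient $g^{-1} \circ h$ lies in $\operatorname{stab}(z)$, so \eqref{eqn:2.5} is precisely the invariance \eqref{eqn:curv_trans_funda} of $\mathbb{K}(r, 0)$ under $\operatorname{stab}(r, 0)$. By Lemma \ref{fixing elements in aut}, for $0 < r < 1$ this stabilizer is generated by the single involution $\tilde{\varphi}_r$, and Lemma \ref{curvature condition} then identifies the invariance (including the $r = 0$ case handled there by a limit) with the two equations in \eqref{eqn:curv_on_fund}: the symmetry $\mathbb{K}_{1\bar{2}}(r, 0) = \mathbb{K}_{2\bar{1}}(r, 0)$ and the relation $r(r^2 - 2)\mathbb{K}_{1\bar{1}}(r, 0) = 2\mathbb{K}_{1\bar{2}}(r, 0) + r\mathbb{K}_{2\bar{2}}(r, 0)$. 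Only the second is listed in the theorem, and I would point out that the first is a consequence of it: since $\mathbb{K}$ is Hermitian, $\mathbb{K}_{1\bar{1}}(r, 0)$ and $\mathbb{K}_{2\bar{2}}(r, 0)$ are real and $\mathbb{K}_{2\bar{1}}(r, 0) = \overline{\mathbb{K}_{1\bar{2}}(r, 0)}$, so the scalar relation forces $2\mathbb{K}_{1\bar{2}}(r, 0)$ to be real, whence $\mathbb{K}_{1\bar{2}}(r, 0) = \mathbb{K}_{2\bar{1}}(r, 0)$.

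To finish, I would assemble both implications. The ``only if'' direction is immediate: homogeneity gives \eqref{eqn:2.1} by Proposition \ref{curvature transformation}, which specializes to \eqref{eq:1} and, on $\Lambda$, to \eqref{eqn:curv_trans_funda}, hence to the scalar identity. For the ``if'' direction I would check that \eqref{eq:1} for the single representative $g$ above, together with stabilizer invariance, recovers \eqref{eqn:2.4} for every admissible automorphism: any other $g'$ with $g'(z) = w$ equals $g \circ s$ for some $s \in \operatorname{stab}(z)$, and since $Dg'(z) = Dg(z)\, Ds(z)$ by the chain rule (as $s(z) = z$), the invariance $\mathbb{K}(z) = Ds(z)^{\operatorname{tr}} \mathbb{K}(z) \overline{Ds(z)}$ exactly absorbs the extra Jacobian factor. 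I expect the main obstacle to be this reconciliation rather than any single computation: the explicit formula \eqref{eq:1} becomes representation-independent (in particular invariant under swapping $\alpha$ and $\beta$, which leaves $|\varphi_\alpha(\beta)|$ fixed but changes $g$) only once the value $\mathbb{K}(|\varphi_\alpha(\beta)|, 0)$ is known to be stabilizer-invariant, and it is precisely here that condition \eqref{eqn:curv_on_fund}, equivalently the stated scalar identity, is indispensable.
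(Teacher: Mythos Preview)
Your proposal is correct and follows essentially the same route as the paper: identify the automorphism $\tilde\varphi_\alpha \circ \tilde\varphi_{\theta(\alpha,\beta)}$ carrying $(|\varphi_\alpha(\beta)|, 0)$ to $(\alpha+\beta,\alpha\beta)$, reduce the stabilizer compatibility on $\Lambda$ to the scalar identity via Lemmas \ref{fixing elements in aut} and \ref{curvature condition}, and then invoke Theorem \ref{thm:2.2}. You are in fact more careful than the paper on two points it passes over in silence: that the Hermitian property of $\mathbb K$ forces $\mathbb K_{1\bar 2}(r,0)$ to be real, so the symmetry $\mathbb K_{1\bar 2}(r,0)=\mathbb K_{2\bar 1}(r,0)$ follows from the single listed scalar relation; and that \eqref{eq:1} for one chosen $g$, together with the stabilizer invariance of $\mathbb K(z)$, automatically yields \eqref{eqn:2.4} for every $g'$ with $g'(z)=w$.
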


\begin{proof}
For  all $r \in [0, 1)$, we have the equality $r(r^2 - 2)\mathbb K_{1\bar1}(r, 0) = 2\mathbb K_{1 \bar2}(r, 0) + r\mathbb K_{2\bar 2}(r, 0)$. From Lemma \ref{curvature condition}, it follows that the curvature matrix $\mathbb K$ satisfies Equation \eqref{eqn:curv_trans_funda} for all $0 \leq r < 1$ and $\varphi \in$ M\"ob such that $\tilde{\varphi}(r, 0) = (r, 0)$.
Let $\alpha, \beta$ be two arbitrary but fixed elements in $\mathbb D$. Note that 
$$(\tilde\varphi_{\alpha} \circ \tilde \varphi_{\theta(\alpha, \beta)}) (|\varphi_{\alpha}(\beta)|, 0) = (\alpha + \beta, \alpha \beta).$$
Now the proof is completed by appealing to Theorem \ref{thm:2.2}. 
\end{proof}

Let $\mathbb A^{(\lambda)} (\mathbb D^2)$, $\lambda > 1$, be the reproducing kernel Hilbert space consisting of complex valued holomorphic functions defined on $\mathbb D^2$, square integrable  with respect to the measure $dV^{(\lambda)}:=\tfrac{\lambda-1}{\pi} (\rho(\boldsymbol r))^{\lambda - 2}d\boldsymbol r d\boldsymbol \theta$, where 
\[\rho(\boldsymbol r) = (1-r_1^2)(1-r_2^2),\,\, d\boldsymbol r=  r_1 r_2 dr_1 dr_2\text{ and } d \boldsymbol \theta=  d \theta_1 d\theta_2.\]  
The reproducing kernel of $\mathbb A^{(\lambda)} (\mathbb D^2)$ is then known to be the function $$ \frac{1}{(1-z_1\overbar{w}_1)^\lambda(1-z_2\overbar{w}_2)^\lambda} . $$

\subsection{Weighted Bergman modules \texorpdfstring{$\mathbb A^{(\lambda)} (\mathbb G^2)$}{LG}} Let $d \hat{V}_{\boldsymbol s}^{(\lambda)}$ be the measure on the symmetrized bi-disc $\mathbb{G}_2$ obtained by the change of variable formula:
$$
\int_{\mathbb{G}_2} f d \hat{V}_{\boldsymbol s}^{(\lambda)}=\int_{\mathbb{D}^2}(f \circ \boldsymbol s)\left|J_{\boldsymbol s}\right|^2 d V^{(\lambda)}, \lambda>1,
$$
where $J_{\boldsymbol s}(\boldsymbol{z})$ is the complex Jacobian of the symmetrization map $\boldsymbol s$. For $\lambda > 1$, we let $dV_{\boldsymbol s}^{(\lambda)}$ be the normalized measure $\left\|J_{\boldsymbol s}\right\|_\lambda^{-2} d V_{\boldsymbol s}^{(\lambda)}.$
 The weighted Bergman space $\mathbb{A}^{(\lambda)}\left(\mathbb{G}_2\right), \lambda>1$, on the symmetrized bi-disc $\mathbb{G}_2$ is the subspace of the Hilbert space $L^2\left(\mathbb{G}_2, d V_{\boldsymbol s}^{(\lambda)}\right)$ consisting of holomorphic functions. The map $\Gamma: \mathbb{A}^{(\lambda)}\left(\mathbb{G}_2\right) \longrightarrow \mathbb{A}^{(\lambda)}\left(\mathbb{D}^2\right)$ defined by the rule:
$$
(\Gamma f)(\boldsymbol{z})=\left\|J_{\boldsymbol s}\right\|_\lambda^{-1} J_{\boldsymbol s}(\boldsymbol{z})(f \circ \boldsymbol s)(\boldsymbol{z}), \quad f \in \mathbb{A}^{(\lambda)}\left(\mathbb{G}_2\right), \boldsymbol{z} \in \mathbb{D}^2
$$
is an isometry. The range of $\Gamma$ coincides with the subspace $\mathbb A^{(\lambda)}_{\text{anti}}(\mathbb D^2)\subset \mathbb A^{(\lambda)}(\mathbb D^2)$ consisting anti-symmetric functions in $\mathbb A^{(\lambda)}(\mathbb D^2)$. Thus $\Gamma$ is a unitary isomorphism between $\mathbb{A}^{(\lambda)}\left(\mathbb{G}_2\right)$ and $\mathbb A^{(\lambda)}_{\text{anti}}(\mathbb D^2)$. 

The reproducing kernel of $\mathbb A^{(\lambda)}_{\text{anti}}(\mathbb D^2)$ is computed explicitly in \cite{MRZ}. Therefore, the reproducing kernel $B^{(\lambda)}$ -- known as the weighted Bergman kernel with weight $\lambda$ -- of the Hilbert space $\mathbb{A}^{(\lambda)}\left(\mathbb{G}_2\right)$ can be computed explicitly using the reproducing kernel of  $\mathbb A^{(\lambda)}_{\text{anti}}(\mathbb D^2)$ and the unitary isomorphism $\Gamma$ as shown in \cite[Theorem 2.3]{MRZ}. For $\lambda > 0$, the weighted Bergman kernel $B^{(\lambda)}$ on the symmetrized bi-disc is given  below (cf. \cite[Corollary 2.4]{MRZ}). 
For $\boldsymbol{z} = (z_1,z_2)$ and $\boldsymbol{w}=(w_1, w_2)$ in $\mathbb{D}^2$, let $B_{\boldsymbol{s}}^{(\lambda)}$ be the kernel 
\begin{flalign}\label{defofwbergker}
 &B^{(\lambda)}_{\boldsymbol{s}}( \boldsymbol{ z } ,  \boldsymbol{ w } )\\
\nonumber &\phantom{gada}= \frac{1}{2(z_1 - z_2)(\bar{w}_1 - \bar{w}_2)} \times
\left[ \frac{1}{(1 - z_1\bar{w}_1)^\lambda (1 - z_2\bar{w}_2)^\lambda} - \frac{1}{(1 - z_1\bar{w}_2)^\lambda (1 - z_2\bar{w}_1)^\lambda}\right],  \end{flalign}
defined on the bi-disc $\mathbb{D}^2$. We note that the right hand side of Equation \eqref{defofwbergker} is symmetric in $z_1,z_2$ and $w_1, w_2$. Consequently, it is a function of $\boldsymbol s( \boldsymbol{ z } ), \boldsymbol s( \boldsymbol{ w })$. Therefore, the kernel $B_{\boldsymbol{s}}^{(\lambda)}$ defines a kernel $B^{(\lambda)}$ on $\mathbb{G}_2$ by setting 
\begin{equation}
B^{(\lambda)}(\boldsymbol s( \boldsymbol{ z } ), \boldsymbol s( \boldsymbol{ w }) )= B^{(\lambda)}_{\boldsymbol{s}}( \boldsymbol{ z }, \boldsymbol{ w }),\,\, \boldsymbol{z}, \boldsymbol{w}\in \mathbb{D}^2.
\end{equation} 

 If we set $\Phi:\mathbb{D}^2 \to \mathbb{D}^2$ to be the map $(z_1,z_2) \mapsto (\varphi(z_1), \varphi(z_2)) $, $\varphi\in \mbox{M\"{o}b}$, then the holomorphic function determined by imposing the equality $\tilde{\varphi}\circ \boldsymbol{s}
 =\boldsymbol{s} \circ \Phi$ is an automorphism of $\mathbb{G}_2$. 
 The automorphism group of $\mathbb{G}_2$ consists of exactly the functions $\tilde{\varphi}$, $\varphi$ in M\"{o}b. Applying the chain rule, we have that 
\[
    \det(D\tilde{\varphi})(\boldsymbol{s}(\boldsymbol{z})) = \frac{\varphi(z_2) - \varphi(z_1)}{z_2 - z_1} \varphi^\prime(z_1) \varphi^\prime(z_2),\,\, z_1, z_2 \in \mathbb{D}, 
\]
where $D\tilde{\varphi}$ is the derivative of $\tilde{\varphi}$.
It is easily checked that 
\begin{equation}
\varphi(z_1) - \varphi(z_2) =(z_1 - z_2) \big (\varphi^\prime (z_1)\varphi^\prime(z_2)\big )^{1/2},\,\, z_1, z_2 \in \mathbb{D}. 
\end{equation}
Evidently, $\varphi^\prime (z_1)\varphi^\prime(z_2)$ is a symmetric function of $z_1, z_2 \in \mathbb{D}$, and hence, it defines a function on $\mathbb{G}_2$, which we denote by $\hat{\phi}$. 
Therefore, we have 
\begin{equation}\label{eqn:4.8}
\det(D\tilde{\varphi})(\boldsymbol{s}(\boldsymbol{z})) = (\varphi^\prime (z_1)\varphi^\prime(z_2)\big )^{3/2} =  \big (\hat{\phi}(\boldsymbol{s}(\boldsymbol{z}) )\big )^{3/2},\,\, z_1, z_2 \in \mathbb{D}.
\end{equation}
Rewriting Equation \eqref{eqn:4.8} in the form 
\[J(\tilde{\varphi}, \boldsymbol{s}(\boldsymbol{z})): = \hat{\phi} (\boldsymbol{s}(\boldsymbol{z}) ) =  \det(D\tilde{\varphi})(\boldsymbol{s}(\boldsymbol{z}))^{2/3}
\] 
and then applying the chain rule, we see that $J:\operatorname{Aut}(\mathbb{G}_2) \times \mathbb{G}_2 \to \mathbb{C}\setminus \{0\}$ defines a cocycle. 

A fundamental set $\Lambda$ for the symmetrized bi-disc $\mathbb{G}_2$ is $\{(r,0)\mid 0\leq r < 1\}$ (see Equation \eqref{cocycle}). Explicitly, for 
$ ( z_1 + z_2, z_1 z_2 ) \in \mathbb{ G }_2 $, consider the involution $\varphi_{z_1}(w) = - \frac{w - z_1}{1 - \bar{z}_1 w}$, $w \in \mathbb D$, in M\"ob. Write $\varphi_{z_1}(z_2) = r e^{i\theta}$ with $r = \left| \varphi_{z_1}(z_2)\right|$ and $\theta \in [0, 2 \pi)$. Now denote the automorphism $ w \mapsto e^{ i \theta } w $ of $ \mathbb{ D } $ by $ \varphi_{ \theta } $. Since $\varphi_{z_1}$ is an involution,  it follows that 
$$\left(\varphi_{z_1} \circ \varphi_\theta\right)(r) = \varphi_{z_1}(r e^{i\theta}) = \varphi_{z_1} \left(\varphi_{z_1}(z_2)\right) = z_2.$$
Finally, letting $\varphi = \varphi_{z_1} \circ \varphi_{\theta}$ and recalling the definition of $ \tilde{ \varphi } $, we have that $\tilde{\varphi}\left(s(| \varphi_{z_1}(z_2) | , 0) \right) = (z_1 + z_2, z_1z_2)$.

If $B$ is the Bergman kernel of a domain $\Omega \subset \mathbb{C}^m$ and $g:\Omega\to \Omega$ is a bi-holomorphic map of $\Omega$, then the transformation rule for $B$ is the equality: 
\begin{equation*}
     B(\boldsymbol{z},\boldsymbol{w}) = \det (Dg(\boldsymbol{z})) B(g(\boldsymbol{z}), g (\boldsymbol{w}) )\, \overline{\det (Dg(\boldsymbol{w}))},\,\, \boldsymbol{z}, \boldsymbol{w}\in \Omega.  
\end{equation*}
Specializing to the case of $\Omega=\mathbb{G}_2$, $\boldsymbol{z} = \boldsymbol{w}$ and taking $g=\tilde{\varphi}$, we have the formula 
\begin{equation}\label{eqn:4.8a}
B^{(2)} (\boldsymbol{s}(\boldsymbol{z}), \boldsymbol{s}(\boldsymbol{z}) ) = |\det (D\tilde{\varphi}(\boldsymbol{s}(\boldsymbol{z})))|^2 B^{(2)}(\tilde{\varphi}(\boldsymbol{s}(\boldsymbol{z})),\tilde{\varphi}(\boldsymbol{s}(\boldsymbol{z}))) 
\end{equation}
where $B^{(2)}$ is the Bergman kernel of the symmetrized bi-disc $\mathbb{G}_2$. Setting $\tilde{\varphi}:= \widetilde{\varphi_{z_1} \circ \varphi_\theta}$, and evaluating Equation \eqref{eqn:4.8a} at $(| \varphi_{z_1}(z_2) | , 0)$, we finally obtain the factorization: 
\begin{align*}
    B^{(2)}(\boldsymbol{s}(\boldsymbol{z}), \boldsymbol{s}(\boldsymbol{z})) &= |\det (D\tilde{\varphi}(\boldsymbol{s}(| \varphi_{z_1}(z_2) | , 0)))^{-2} B^{(2)}((| \varphi_{z_1}(z_2) | , 0), (| \varphi_{z_1}(z_2) | , 0))\\
    &= \frac{1}{|1-z_1\overbar{z_2}|^6} \frac{1}{2|\varphi_{z_1}(z_2)|^2} \left ( \frac{1}{( 1- |\varphi_{z_1}(z_2)|^2)^2} -1 \right )\\
    &= \frac{1}{|1-z_1\overbar{z_2}|^6} \frac{|1-z_1\overbar{z_2}|^2}{2|z_1-z_2|^2} \frac{|z_1 - z_2|^2(|1-z_1\overline{z_2}|^2 + (1-|z_1|^2)(1-|z_2|^2))}{(1-|z_1|^2)^2(1-|z_2|^2)^2}\\
    &= \frac{1}{2|1-z_1\overbar{z_2}|^4} \Big ( \frac{|1-z_1\overline{z_2}|^2}{(1-|z_1|^2)^2(1-|z_2|^2)^2} + \frac{1}{(1-|z_1|^2)(1-|z_2|^2)}\Big )
\end{align*}
where we have used the easily verified formula: 
\[\det (D\tilde{\varphi}(\boldsymbol{s}(| \varphi_{z_1}(z_2) | , 0)) = \varphi_{z_1}^\prime(|\varphi_{z_1}(z_2)|) |\varphi_{z_1}^\prime(0)| = |1-z_1 \overline{z_2}|^3.\]
In this case, the factorization of $B^{(2)}$ obtained from general principles shows that it is quasi-invariant (relative to $\operatorname{Aut}(\mathbb{G}_2)$) with the multiplier 
    $\det (D\tilde{\varphi}(\boldsymbol{s}(| \varphi_{z_1}(z_2) | , 0))^{-2}$. 
Unlike the case where the automorphism group acts transitively and therefore, any power of the Bergman kernel $B$ is automatically quasi-invariant, here $B^{(\lambda)}$ is not a power of $B^{(2)}$. The computation given above for $B^{(2)}$ shows that if $(B^{(2)})^\nu$ is non-negative definite for some $\nu >0$, then it must be quasi-invariant. 
Furthermore, if the Hilbert module determined by the kernel $(B^{(2)})^\nu$ is analytic, it must be homogeneous under $\operatorname{Aut}(\mathbb{G}_2)$. However, we have to proceed differently to establish that $B^{(\lambda)}$ is quasi-invariant. Then we investigate if and when  $(B^{(\lambda)})^\nu$ is quasi-invariant for a pair $\lambda, \nu$ of positive real numbers in the following subsection.

\begin{prop}\label{homogofweightedbergker}
For every $\lambda > 0$, the reproducing kernel $B^{(\lambda)}$ of the  Hilbert space $\mathbb A^{(\lambda)}(\mathbb G_2)$ is quasi-invariant relative to $\operatorname{Aut}(\mathbb{G}_2)$. Moreover, $\mathbb A^{(\lambda)}(\mathbb G_2)$ is an analytic Aut$(\mathbb G_2)$-homogeneous  Hilbert module. 
\end{prop}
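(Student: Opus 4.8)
The plan is to verify the quasi-invariance identity of Theorem~\ref{thm:2.1-Kquasi} for $K=B^{(\lambda)}$ directly from the explicit formula \eqref{defofwbergker}, reducing the computation on $\mathbb{G}_2$ to the already understood quasi-invariance of the weighted Bergman kernel on the bidisc, and then to combine this with an analyticity check so that Theorem~\ref{thm:2.1-Kquasi} upgrades quasi-invariance to homogeneity. First I would record the quasi-invariance on the disc: for $\varphi\in\mbox{M\"{o}b}$ the kernel $(1-z\bar w)^{-\lambda}$ satisfies $(1-\varphi(z)\overline{\varphi(w)})^{-\lambda} = \varphi'(z)^{-\lambda/2}(1-z\bar w)^{-\lambda}\overline{\varphi'(w)^{-\lambda/2}}$, which follows from the elementary identity $1-\varphi(z)\overline{\varphi(w)} = \varphi'(z)^{1/2}(1-z\bar w)\overline{\varphi'(w)^{1/2}}$, the branch ambiguity being absorbed into a multiplier (cf. Remark~\ref{projco}). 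Taking the diagonal action $\Phi=(\varphi,\varphi)$ on $\mathbb{D}^2$, the product kernel $k(\boldsymbol z,\boldsymbol w)=\prod_i(1-z_i\bar w_i)^{-\lambda}$ is then quasi-invariant with cocycle $(\varphi'(z_1)\varphi'(z_2))^{\lambda/2}$, which is visibly symmetric under the swap $\sigma\colon(w_1,w_2)\mapsto(w_2,w_1)$.

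Next I would transport this to $B^{(\lambda)}_{\boldsymbol{s}}$. Writing $B^{(\lambda)}_{\boldsymbol{s}}(\boldsymbol z,\boldsymbol w)=\frac{1}{2(z_1-z_2)(\bar w_1-\bar w_2)}\big[k(\boldsymbol z,\boldsymbol w)-k(\boldsymbol z,\sigma\boldsymbol w)\big]$, I would use $\Phi\sigma=\sigma\Phi$ together with the $\sigma$-symmetry of the bidisc cocycle to conclude that the antisymmetrized bracket transforms under $\Phi$ with the single factor $(\varphi'(z_1)\varphi'(z_2))^{-\lambda/2}$ on the $\boldsymbol z$-side and its conjugate on the $\boldsymbol w$-side. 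The denominator contributes the remaining factor through $\varphi(z_1)-\varphi(z_2)=(z_1-z_2)(\varphi'(z_1)\varphi'(z_2))^{1/2}$, producing an extra $(\varphi'(z_1)\varphi'(z_2))^{1/2}$. Combining, $B^{(\lambda)}_{\boldsymbol{s}}$ is quasi-invariant under $\Phi$ with cocycle $(\varphi'(z_1)\varphi'(z_2))^{(\lambda+1)/2}$. Since $\varphi'(z_1)\varphi'(z_2)=\hat\phi(\boldsymbol s(\boldsymbol z))$ is symmetric, this descends to $\mathbb{G}_2$, giving quasi-invariance of $B^{(\lambda)}$ with the projective cocycle $J(\tilde\varphi,\cdot)=\hat\phi^{(\lambda+1)/2}=(\det D\tilde\varphi)^{(\lambda+1)/3}$ by \eqref{eqn:4.8}; a reassuring sanity check is that $\lambda=2$ returns the genuine Bergman multiplier $\det D\tilde\varphi$.

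Finally, to pass from quasi-invariance to homogeneity I would check that $(\mathbb{A}^{(\lambda)}(\mathbb{G}_2),B^{(\lambda)})$ is an analytic Hilbert module. Boundedness of multiplication by the coordinates $s_1,s_2$ is clear, since these are bounded holomorphic functions on the bounded domain $\mathbb{G}_2$ and multiplication by a bounded function is bounded on a weighted Bergman space. For polynomial density I would use the unitary $\Gamma$ onto $\mathbb A^{(\lambda)}_{\text{anti}}(\mathbb D^2)$: the polynomials are dense in $\mathbb{A}^{(\lambda)}(\mathbb{D}^2)$, so the antisymmetric polynomials are dense in $\mathbb A^{(\lambda)}_{\text{anti}}(\mathbb D^2)$ (being the image of a dense set under the bounded antisymmetrizing projection); every antisymmetric polynomial equals $(z_1-z_2)$ times a symmetric one, and under $\Gamma^{-1}$ these correspond exactly to polynomials on $\mathbb{G}_2$, because $J_{\boldsymbol s}=z_1-z_2$ and $\mathbb{C}[z_1,z_2]^{\mathrm{sym}}=\mathbb{C}[s_1,s_2]$. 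Hence polynomials are dense in $\mathbb{A}^{(\lambda)}(\mathbb{G}_2)$, and Theorem~\ref{thm:2.1-Kquasi} then yields $\operatorname{Aut}(\mathbb{G}_2)$-homogeneity. The main obstacle is the cocycle bookkeeping in the middle step: keeping track of the branch of the half-integer powers and confirming that the denominator contributes precisely the extra $\tfrac12$-power needed to promote the bidisc cocycle $(\varphi'(z_1)\varphi'(z_2))^{\lambda/2}$ to $(\varphi'(z_1)\varphi'(z_2))^{(\lambda+1)/2}$. The branch indeterminacy here is exactly what forces $J$ to be a projective rather than a genuine cocycle.
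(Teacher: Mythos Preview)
Your approach is essentially the same as the paper's: both verify quasi-invariance of $B^{(\lambda)}$ by substituting the M\"{o}bius identities $(1-\varphi(z)\overline{\varphi(w)})^\lambda=(\varphi'(z))^{\lambda/2}(1-z\bar w)^\lambda\overline{(\varphi'(w))}^{\lambda/2}$ and $\varphi(z_1)-\varphi(z_2)=(z_1-z_2)(\varphi'(z_1)\varphi'(z_2))^{1/2}$ into the explicit formula \eqref{defofwbergker}, arriving at the cocycle $(\hat\phi)^{(\lambda+1)/2}$, and both establish polynomial density via the unitary $\Gamma$ onto $\mathbb{A}^{(\lambda)}_{\mathrm{anti}}(\mathbb{D}^2)$. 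One small caveat: your boundedness justification (``multiplication by a bounded function is bounded on a weighted Bergman space'') relies on the $L^2$ realization, which the paper only sets up for $\lambda>1$; for $0<\lambda\le 1$ the paper instead transports boundedness through $\Gamma$ from the known boundedness of $M_1+M_2,\,M_1M_2$ on $\mathbb{A}^{(\lambda)}_{\mathrm{anti}}(\mathbb{D}^2)\subset\mathbb{A}^{(\lambda)}(\mathbb{D}^2)$, which is the cleaner route here.
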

\begin{proof} Since the space of anti-symmetric polynomials is dense in $\mathbb A^{(\lambda)}_{\text{anti}}(\mathbb D^2)$, it follows that the $\mathbb{C}[\boldsymbol{z}]$ is dense in $\mathbb A^{(\lambda)}(\mathbb G_2)$.
Note that the pair $(M_1, M_2)$ acting on $\mathbb A^{(\lambda)}(\mathbb G_2)$ is unitarily equivalent to $(M_1+M_2, M_1 M_2)$ acting on $\mathbb A^{(\lambda)}_{\text{anti}}(\mathbb D^2)$. 
Consequently, the pair $(M_1, M_2)$ acting on $\mathbb A^{(\lambda)}(\mathbb G_2)$ is also bounded. Therefore, $\mathbb A^{(\lambda)}(\mathbb G_2)$ is an analytic Hilbert module.  Consequently, it is enough to prove only that $\mathbb A^{(\lambda)}(\mathbb G_2)$ is Aut$(\mathbb G_2)$-homogeneous. Since $\mathbb A^{(\lambda)}(\mathbb G_2)$ is an analytic Hilbert module, Aut$(\mathbb G_2)$-homogeneity of $\mathbb A^{(\lambda)}(\mathbb G_2)$ is equivalent to quasi-invariance of the reproducing  kernel $B^{(\lambda)}$.    

Let $\varphi$ be an arbitrary element of M\"{o}b and $\boldsymbol{z} = (z_1, z_2)$, $\boldsymbol{w} = (w_1, w_2)$ be two arbitrary point of $\mathbb D^2$. From Equation \eqref{defofwbergker}, we have
\begin{flalign*}
 &B^{(\lambda)}(\tilde{\varphi}(\boldsymbol s( \boldsymbol{ z }) ), \tilde{\varphi}(\boldsymbol s( \boldsymbol{ w }) ))
 = B^{(\lambda)}(\boldsymbol s( \varphi(\boldsymbol{ z }) ), \boldsymbol s( \varphi(\boldsymbol{ w })) )\\
 &= \frac{1}{2\left(\varphi(z_1) - \varphi(z_2)\right)\left(\overbar{\varphi({w}_1)} - \overbar{\varphi({w}_2)}\right)}\\  &\phantom{Din} \times
\left[ \frac{1}{\left(1 - \varphi(z_1)\overbar{\varphi({w}_1)}\right)^\lambda \left(1 - \varphi(z_2)\overbar{\varphi({w}_2)}\right)^\lambda} - \frac{1}{\left(1 - \varphi(z_1)\overbar{\varphi({w}_2)}\right)^\lambda \left(1 - \varphi(z_2)\overbar{\varphi({w}_1)}\right)^\lambda}\right].  
\end{flalign*}

A direct computation verifies the following identities
\begin{align*}
& \left(\varphi(z_1) - \varphi(z_2)\right)(\overbar{\varphi({w}_1)} - \overbar{\varphi({w}_2)}) = \\ &\phantom{gadadharmisraprahllad}\left(\varphi'(z_1)\right)^{\frac{1}{2}}\left(\varphi'(z_2)\right)^{\frac{1}{2}} \overbar{\left(\varphi'(w_1)\right)}^\frac{1}{2} \overbar{\left(\varphi'(w_2)\right)}^\frac{1}{2}(z_1 - z_2)(\overbar{w_1} - \overbar{w_2}),\\
& \left(1 - \varphi(z_1)\overbar{\varphi(w_1)}\right)^\lambda = \left(\varphi'(z_1)\right)^{\frac{\lambda}{2}}\overbar{\left(\varphi'(w_1)\right)}^\frac{\lambda}{2}(1 - z_1 \overbar{w_1})^\lambda,\\
& \left(1 - \varphi(z_2)\overbar{\varphi(w_2)}\right)^\lambda = \left(\varphi'(z_2)\right)^{\frac{\lambda}{2}}\overbar{\left(\varphi'(w_2)\right)}^\frac{\lambda}{2}(1 - z_2 \overbar{w_2})^\lambda,\text{ and }\\
& \left(1 - \varphi(z_1)\overbar{\varphi({w}_2)}\right)^\lambda \left(1 - \varphi(z_2)\overbar{\varphi({w}_1)}\right)^\lambda =\\ &\phantom{gadadharmisraprahlladdeb}\left(\varphi'(z_1)\right)^{\frac{\lambda}{2}}\overbar{\left(\varphi'(w_1)\right)}^\frac{\lambda}{2} \left(\varphi'(z_2)\right)^{\frac{\lambda}{2}}\overbar{\left(\varphi'(w_2)\right)}^\frac{\lambda}{2} (1 - z_1 \overbar{w_2})^{\lambda}(1 - z_2\overbar{w_1})^\lambda.
\end{align*}
Substituting these four identities in the expression of $B^{(\lambda)}(\tilde{\varphi}(\boldsymbol s( \boldsymbol{ z }) ), \tilde{\varphi}(\boldsymbol s( \boldsymbol{ w }) ))$, we obtain
\begin{equation}\label{quasiinvproofbergker}
 B^{(\lambda)}(\boldsymbol s( \boldsymbol{ z }) , \boldsymbol s( \boldsymbol{ w }) ) = \left(\varphi^\prime(z_1) \varphi^\prime(z_2) \right)^{\frac{\lambda+1}{2}}
 B^{(\lambda)}(\tilde{\varphi}(\boldsymbol s( \boldsymbol{ z }) ), \tilde{\varphi}(\boldsymbol s( \boldsymbol{ w }) ))
 \overbar{\left(\varphi^\prime(w_1) \varphi^\prime(w_2) \right)}^{\frac{\lambda+1}{2}}
\end{equation}
Since $\varphi^\prime(z_1) \varphi^\prime(z_2)$ is a symmetric holomorphic function on $\mathbb D^2$, there exists a holomorphic function $\hat{\phi}$ on $\mathbb G_2$ such that $\hat{\phi}(\boldsymbol{s}( \boldsymbol{ z })) = \varphi^\prime(z_1) \varphi^\prime(z_2)$. Therefore, we can rewrite Equation \eqref{quasiinvproofbergker} in the form 
\begin{equation}\label{quasiinvproofbergker1}
    B^{(\lambda)}(\boldsymbol s( \boldsymbol{ z }) , \boldsymbol s( \boldsymbol{ w }) ) =  \left(\hat{\phi}(\boldsymbol s( \boldsymbol{ z }))\right)^{\frac{\lambda + 1}{2}}
 B^{(\lambda)}(\tilde{\varphi}(\boldsymbol s( \boldsymbol{ z }) ), \tilde{\varphi}(\boldsymbol s( \boldsymbol{ w }) ))
 \overbar{\left(\hat{\phi}(\boldsymbol s( \boldsymbol{ w }))\right)}^{\frac{\lambda + 1}{2}}.  
\end{equation}
proving that $B^{(\lambda)}$ is quasi-invariant. Thus, from Theorem \ref{thm:2.1-Kquasi}, it follows that $\mathbb A^{(\lambda)}(\mathbb G_2)$ is an Aut$(\mathbb G_2)$-homogeneous analytic Hilbert module. 
\end{proof}
\begin{cor} \label{cor:4.5} The weighted Bergman kernel $B^{(\lambda)}$ of the symmetrized bidisc $\mathbb{G}_2$ is quasi-invariant. Explicitly, we have 
\[B^{(\lambda)}(\boldsymbol{u}, \boldsymbol{v}) = (\hat{\phi}(\boldsymbol{u}))^{\tfrac{\lambda+1}{2}} B^{(\lambda)}(\tilde{\varphi}(\boldsymbol{u}), \tilde{\varphi}(\boldsymbol{v})) \overline{(\hat{\phi}(\boldsymbol{u}))}^{\tfrac{\lambda+1}{2}},\,\,\boldsymbol{u}, \boldsymbol{v} \in \mathbb{G}_2.\]
\end{cor}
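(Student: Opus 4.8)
The plan is to recognize that this corollary is essentially a restatement, in the intrinsic coordinates of $\mathbb{G}_2$, of the identity already derived inside the proof of Proposition \ref{homogofweightedbergker}, namely Equation \eqref{quasiinvproofbergker1}. The only extra ingredient required is the surjectivity of the symmetrization map $\boldsymbol s : \mathbb{D}^2 \to \mathbb{G}_2$, which lets one express an arbitrary pair of points of $\mathbb{G}_2$ in symmetrized coordinates and thereby transfer the bidisc computation verbatim to $\mathbb{G}_2$.

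Concretely, I would argue as follows. Given $\boldsymbol u, \boldsymbol v \in \mathbb{G}_2$, choose preimages $\boldsymbol z = (z_1, z_2)$ and $\boldsymbol w = (w_1, w_2)$ in $\mathbb{D}^2$ with $\boldsymbol s(\boldsymbol z) = \boldsymbol u$ and $\boldsymbol s(\boldsymbol w) = \boldsymbol v$; these exist precisely because $\boldsymbol s$ maps $\mathbb{D}^2$ onto $\mathbb{G}_2$. Recalling that $\hat\phi$ is the holomorphic function on $\mathbb{G}_2$ determined by $\hat\phi(\boldsymbol s(\boldsymbol z)) = \varphi'(z_1)\varphi'(z_2)$, Equation \eqref{quasiinvproofbergker1} reads, in these coordinates,
\[
B^{(\lambda)}(\boldsymbol u, \boldsymbol v) = (\hat\phi(\boldsymbol u))^{\frac{\lambda+1}{2}} \, B^{(\lambda)}(\tilde\varphi(\boldsymbol u), \tilde\varphi(\boldsymbol v)) \, \overline{(\hat\phi(\boldsymbol v))}^{\frac{\lambda+1}{2}},
\]
which is the asserted quasi-invariance relation. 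Comparing with the normalization of Theorem \ref{thm:2.1-Kquasi}, this exhibits $B^{(\lambda)}$ as quasi-invariant relative to $\operatorname{Aut}(\mathbb{G}_2)$ with multiplier $J^{(\lambda+1)/2}$, where $J(\tilde\varphi, \cdot) = \hat\phi$ is the cocycle introduced before the proposition; one also notes that $J^{(\lambda+1)/2}$ inherits the projective-cocycle property from $J$.

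Because the entire analytic content already lives in the preceding proposition, there is no genuine obstacle here. The single point that deserves a word of care is the well-definedness of $\hat\phi(\boldsymbol u) = \varphi'(z_1)\varphi'(z_2)$ independently of the chosen preimage $\boldsymbol z$ of $\boldsymbol u$; this is guaranteed by the symmetry of $\varphi'(z_1)\varphi'(z_2)$ in $z_1, z_2$, which is exactly what permits it to descend to a function on $\mathbb{G}_2$. I would also flag what appears to be a typographical slip in the displayed statement, where the second factor is written $\overline{(\hat\phi(\boldsymbol u))}$ rather than the correct $\overline{(\hat\phi(\boldsymbol v))}$ forced by \eqref{quasiinvproofbergker1}.
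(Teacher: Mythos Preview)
Your proposal is correct and matches the paper's approach: the corollary is stated without proof in the paper, being an immediate rephrasing of Equation \eqref{quasiinvproofbergker1} from the proof of Proposition \ref{homogofweightedbergker} via the surjectivity of $\boldsymbol s$. Your observation about the typographical slip (the second factor should read $\overline{(\hat\phi(\boldsymbol v))}^{\frac{\lambda+1}{2}}$) is also correct.
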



\begin{remark} 
It is possible to give a different proof of  Proposition \ref{homogofweightedbergker}. Consider the Hilbert module $\mathbb{A}^{(\lambda)}(\mathbb{D}^2)$. The pair of multiplication operators $(M_1,M_2)$ on $\mathbb{A}^{(\lambda)}(\mathbb{D}^2)$ is homogeneous under the automorphism group of $\mathbb{D}^2$. 
The intertwining unitary in this case is of the form: 
\[(U^{(\lambda)}_{\phi^{-1}} f)(\boldsymbol{z}) = (\det D\phi)^{\lambda/2}(\boldsymbol{z}) f(\phi(\boldsymbol{z})),\,\,  f\in \mathbb{A}^{(\lambda)}(\mathbb{D}^2),\] 
where $\phi=(\varphi_1,\varphi_2)$, $\varphi_1, \varphi_2$ are in M\"{o}b.
The subspace $\mathbb{A}_{\operatorname{anti}}^{(\lambda)}(\mathbb{D}^2)$ is invariant under the unitaries $U_{\phi^{-1}}^{(\lambda)}$ and the pair of operators $(M_1+ M_2, M_1 M_2)$, when $\phi=(\varphi, \varphi)$, $\varphi \in \mbox{M\"{o}b}$. Therefore, 
\[U_{\phi^{-1}}^{(\lambda)} (M_1 + M_2, M_1 M_2) {U_{\phi^{-1}}^{(\lambda)*}}= (\varphi(M_1) + \varphi(M_2), \varphi(M_1) \varphi(M_2) ).\]
However, transporting the unitaries $U_{\phi^{-1}}^{(\lambda)}$ to the weighted Bergman space $\mathbb{A}^{(\lambda)}(\mathbb{G}_2)$ using the unitary $\Gamma$ defined in \cite[p. 2361]{MRZ}, we conclude that 
it must intertwine $(M_1, M_2)$ and $\tilde{\varphi}(M_1, M_2)$ on $\mathbb{A}^{(\lambda)}(\mathbb{G}_2)$. This proves that for $\lambda > 0$, the pair $(M_1, M_2)$ defined on $\mathbb{A}^{(\lambda)}(\mathbb{G}_2)$ is homogeneous under the automorphism group of $\mathbb{G}_2$. An explicit form of this unitary is given in Remark \ref{rem:4.11}. 
\end{remark}
\begin{remark}
 From Equation \eqref{quasiinvproofbergker1}, we can obtain a factorization  recovering $B^{(\lambda)}(\boldsymbol{z}, \boldsymbol{z})$ from its values on the fundamental set, namely, $\Lambda=\{(r,0) : r=  |\varphi_{z_1} (z_2)|, \varphi \in \mbox{\rm M\"{o}b}\}$. Indeed, we have  
 \begin{equation}\label{factorquasiinvbergker}
  B^{(\lambda)}(\boldsymbol s( \boldsymbol{ z }) , \boldsymbol s( \boldsymbol{ z }) ) = (1-z_1\overbar{z_2})^{\frac{\lambda + 1}{2}}
 \frac{1}{2|\varphi_{z_1}(z_2)|^2} \big ( \frac{1}{( 1- |\varphi_{z_1}(z_2)|^2)^\lambda} -1 \big )
 \overbar{(1-z_1\overbar{z_2})}^{\frac{\lambda + 1}{2}}.  
\end{equation}

\end{remark}

\begin{remark} \label{symcomp}
Since $\mathbb A^{(\lambda)}_{\rm sym}(\mathbb D^2) = \{f\in \mathbb A^{(\lambda)}(\mathbb D^2): f (z_1, z_2) = f(z_2, z_1),\, z_1, z_2\in\mathbb D\}$ and $\mathbb A^{(\lambda)}(\mathbb D^2) = \mathbb A_{\rm sym}(\mathbb D^2)\oplus \mathbb A^{(\lambda)}_{\rm anti}(\mathbb D^2)$, it follows that the space of all symmetric polynomial is dense in $\mathbb A^{(\lambda)}_{sym}(\mathbb D^2)$. Let $\mathcal H^{(\lambda)}(\mathbb G_2):= \{f\in \mathcal O(\mathbb G_2): f\circ \boldsymbol{s} \in  \mathbb A_{\rm sym}(\mathbb D^2)\}$. The map $f\mapsto f\circ \boldsymbol{s}$ is one-one from $\mathcal H^{(\lambda)}(\mathbb G_2)$ onto $\mathbb A^{(\lambda)}_{\rm sym}(\mathbb D^2)$. Thus, by setting  $\langle f, f\rangle := \langle f\circ \boldsymbol{s}, g\circ\boldsymbol{s}\rangle$ for $f, g\in \mathcal H^{(\lambda)}(\mathbb G_2)$, we see that $\mathcal H^{(\lambda)}(\mathbb G_2)$ is a reproducing kernel Hilbert space. Moreover, the polynomial ring $\mathbb C[\boldsymbol{z}]$ is dense in $\mathcal H^{(\lambda)}(\mathbb G_2)$. Therefore, $\mathcal H^{(\lambda)}(\mathbb G_2)$ is an analytic Hilbert module over $\mathbb C[\boldsymbol{z}]$. The reproducing kernel $C^{(\lambda)}$ of $\mathcal H^{(\lambda)}(\mathbb G_2)$ is given by:
$$
C^{(\lambda)}(\boldsymbol{s(z)}, \boldsymbol{s(w)}) = 
\frac{1}{2} 
\left[ \frac{1}{(1 - z_1\overbar{{w}_1})^\lambda (1 -z_2\overbar{{w}_2})^\lambda} + \frac{1}{(1 - z_1\overbar{{w}_2})^\lambda (1 -z_2\overbar{{w}_1})^\lambda}\right]. 
$$
As in the proof of Proposition \ref{homogofweightedbergker} we check that 
\begin{equation}\label{quasisymker}
 C^{(\lambda)}(\boldsymbol s( \boldsymbol{ z }) , \boldsymbol s( \boldsymbol{ w }) ) = \left(\hat{\phi}(s( \boldsymbol{ z }))\right)^\frac{\lambda}{2}
 C^{(\lambda)}(\tilde{\varphi}(\boldsymbol s( \boldsymbol{ z }) ), \tilde{\varphi}(\boldsymbol s( \boldsymbol{ w }) )))\overbar{\left(\hat{\phi}(s( \boldsymbol{ w }))\right)^\frac{\lambda}{2}}.  
\end{equation}
Furthermore, like the factorization of $B^{(\lambda)}$ given in \eqref{factorquasiinvbergker}, we have a factorization for the kernel  $C^{(\lambda)}$: 
\begin{equation}
    C^{(\lambda)}(\boldsymbol s( \boldsymbol{ z }) , \boldsymbol s( \boldsymbol{ z }) ) = (1-z_1\overbar{z_2})^{\frac{\lambda}{2}}
 \frac{1}{2} \left ( \frac{1}{( 1- |\varphi_{z_1}(z_2)|^2)^\lambda} + 1 \right )
 \overbar{(1-z_1\overbar{z_2})}^{\frac{\lambda}{2}}.    
\end{equation}
The Hilbert space $\mathcal H^{(\lambda)}(\mathbb G_2)$ is an Aut$(\mathbb G_2)$-homogeneous analytic Hilbert module. However, we are not able to determine if the reproducing kernel $C^{(\lambda)}$ is non-vanishing on $\mathbb{G}_2 \times \mathbb{G}_2$. Therefore, we are unable to define the powers $\big (C^{(\lambda)} \big )^\mu$ except when $\mu$ is a natural number.
\end{remark} 

\subsection{Hilbert modules \texorpdfstring{$\mathbb A^{(\lambda, \nu)}$}{TT} over \texorpdfstring{$ \mathbb{ G }_2 $}{AA}} 
 In this subsection, we investigate positive real powers of the weighted Bergman kernels $ B^{ ( \lambda ) } $ on the symmetrized bi-disc $ \mathbb{ G }_2 $. 
We note that if $ ( B^{ ( \lambda ) } )^{ \nu } $ is non-negative definite for some $\nu >0$, then there is a (uniquely determined) Hilbert module $\mathbb A^{(\lambda, \nu)}$ possessing $ ( B^{ ( \lambda ) } )^{ \nu } $ as its reproducing kernel. For this, we first check if $ B^{ ( \lambda ) } $ is a non-vanishing function on $\mathbb G_2$ using the lemma below.

\begin{lemma} \label{weightedbergkerexpansion}
For every $\boldsymbol{ z } = (z_1, z_2) \in \mathbb D^2$, there exists $\epsilon_{\boldsymbol{ z }} > 0$ such that the function $ B^{ ( \lambda ) } $ on $ \mathbb{ G }_2 $ is expressed as
    \begin{align} \label{eqncurv1bergker}
        B^{ ( \lambda ) } (\boldsymbol s( \boldsymbol{ z } ), \boldsymbol s (\boldsymbol{ w }) ) = \frac{ 1 }{ 2 (1 - z_1\bar{w}_2)^\lambda(1 -  z_2 \bar{ w }_1)^{  \lambda } }      \sum_{ n \geq 0 }  { \binom{\lambda}{n + 1 }} \frac{ ( z_1 - z_2 )^{ n } (\bar w_1 - \bar w_2)^n}{ ( 1 - z_1 \bar w_1 )^{ n + 1 } ( 1 - z_2 \bar w_2 )^{ n + 1 } },
    \end{align}
for all $\boldsymbol{ w } = (w_1, w_2) \in \mathbb D^2$ with $|w_1 - w_2| < \epsilon_{\boldsymbol{ z }}$.    
In particular, for $ z \in \mathbb{ D } $,
     \begin{equation} \label{resBerg}
      B^{ ( \lambda ) } (\boldsymbol{s} (z,z), \boldsymbol{s} (z,z) ) = \frac{ \lambda }{ 2 } \times \frac{ 1 }{ ( 1 - | z |^2 )^{ 2 \lambda + 2 } } .
    \end{equation}  
\end{lemma}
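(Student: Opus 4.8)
The plan is to derive the expansion directly from the closed form in Equation \eqref{defofwbergker} by factoring and then invoking the binomial series. Write $P = (1-z_1\bar w_1)(1-z_2\bar w_2)$, $Q = (1-z_1\bar w_2)(1-z_2\bar w_1)$, and $x = (z_1-z_2)(\bar w_1 - \bar w_2)$. The first step is the elementary algebraic identity $Q - P = x$, obtained by expanding both products and cancelling the constant term and the common degree-two term $z_1 z_2 \bar w_1 \bar w_2$. With this in hand, the bracketed difference in \eqref{defofwbergker} becomes $P^{-\lambda} - Q^{-\lambda} = Q^{-\lambda}\big((Q/P)^{\lambda} - 1\big) = Q^{-\lambda}\big((1+u)^{\lambda} - 1\big)$, where $u := x/P$.

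Next I would apply the binomial series $(1+u)^\lambda - 1 = \sum_{n\ge 0}\binom{\lambda}{n+1}u^{n+1}$, valid and absolutely convergent whenever $|u| < 1$. Dividing by the prefactor $2x$ coming from \eqref{defofwbergker} and using the simplification $u^{n+1}/x = x^n/P^{n+1}$ collapses the expression to $\tfrac{1}{2}Q^{-\lambda}\sum_{n\ge 0}\binom{\lambda}{n+1}x^n P^{-(n+1)}$, which is precisely the right-hand side of \eqref{eqncurv1bergker} once $x^n$, $P^{n+1}$, and $Q^\lambda$ are written out in their defining factors. The cancellation of the singular factor $x^{-1}$ against $u^{n+1}$ is the conceptual heart of the argument: it shows that the apparent pole of \eqref{defofwbergker} along $\{z_1 = z_2\}$ is removable, so that the formula does descend to a genuine holomorphic function on $\mathbb{G}_2$, including the image $\mathcal R$ of the diagonal.

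For the convergence condition I would fix $\boldsymbol z \in \mathbb D^2$ and note that $|P| \ge (1-|z_1|)(1-|z_2|) > 0$ while $|x| \le |z_1 - z_2|\,|w_1 - w_2|$; hence choosing $\epsilon_{\boldsymbol z}$ with $|z_1-z_2|\,\epsilon_{\boldsymbol z} < (1-|z_1|)(1-|z_2|)$ (and any $\epsilon_{\boldsymbol z} > 0$ when $z_1 = z_2$) forces $|u| < 1$ for every $\boldsymbol w$ with $|w_1 - w_2| < \epsilon_{\boldsymbol z}$, legitimizing the series. The specialization \eqref{resBerg} then follows by setting $z_1 = z_2 = z = w_1 = w_2$: this gives $x = 0$, so only the $n=0$ term survives with $\binom{\lambda}{1} = \lambda$, while $P = Q = (1-|z|^2)^2$, yielding $B^{(\lambda)}(\boldsymbol s(z,z),\boldsymbol s(z,z)) = \tfrac{1}{2}(1-|z|^2)^{-2\lambda}\cdot \lambda\,(1-|z|^2)^{-2} = \tfrac{\lambda}{2}(1-|z|^2)^{-2\lambda-2}$.

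The only genuine subtlety, and therefore the step I would treat most carefully, is the removability of the diagonal singularity: since \eqref{defofwbergker} is defined a priori only off $\{z_1 = z_2\}$, one must confirm that the power-series representation extends it holomorphically across that set, so that evaluating $B^{(\lambda)}$ at a diagonal point of $\mathbb{G}_2$ is meaningful. Everything else reduces to the single algebraic identity $Q - P = x$ together with a direct application of the standard binomial theorem.
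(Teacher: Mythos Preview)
Your proof is correct and follows essentially the same route as the paper: factor out $Q^{-\lambda}$ from the bracketed difference in \eqref{defofwbergker}, use the identity $Q-P=x$ to recognize $(Q/P)^\lambda=(1+x/P)^\lambda$, expand by the binomial series, and cancel the factor $x$. The paper carries this out via $(1-t)^\lambda$ with $t=1-Q/P=-x/P$, whereas you phrase it as $(1+u)^\lambda$ with $u=x/P$; these are the same computation. Your explicit treatment of the convergence radius and of the removability of the diagonal singularity is a welcome addition that the paper leaves implicit.
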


\begin{proof}
Let $\boldsymbol{z} = (z_1, z_2)$ and $\boldsymbol{w} = (w_1, w_2)$ be arbitrary but fixed points in $\mathbb D^2$. We have 
\begin{flalign*}
&B^{(\lambda)}(\boldsymbol s(\boldsymbol{z}), \boldsymbol s(\boldsymbol{w}))\\ & \phantom{xxxxx} = \frac{1}{2(z_1 - z_2)(\bar{w}_1 - \bar{w}_2)} \times
\left[ \frac{1}{(1 - z_1\bar{w}_1)^\lambda (1 - z_2\bar{w}_2)^\lambda} - \frac{1}{(1 - z_1\bar{w}_2)^\lambda (1 - z_2\bar{w}_1)^\lambda}\right]\\
&\phantom{xxxxx} = \frac{1}{2(z_1 - z_2)(\bar w_1 - \bar w_2) (1 - z_1 \bar w_2)^{\lambda} (1 - z_2 \bar w_1)^\lambda} \left[\frac{(1 - z_1 \bar w_2)^{\lambda} (1 - z_2 \bar w_1)^\lambda}{(1 - z_1 \bar w_1)^\lambda (1 - z_2 \bar w_2)^\lambda} - 1\right]\\
&\phantom{xxxxx} = \frac{1}{2(z_1 - z_2)(\bar w_1 - \bar w_2) (1 - z_1 \bar w_2)^{\lambda} (1 - z_2 \bar w_1)^\lambda} \\
&\phantom{xxxxxxxxxxxxxxx.}\phantom{gadadhar} \times \left[\displaystyle \sum_{n \geq 0} (-1)^n {\binom{\lambda}{n}} \left( 1 - \frac{(1 - z_1 \bar w_2)(1 - z_2 \bar w_1)}{(1 - z_1 \bar w_1)(1 - z_2 \bar w_2)}\right)^n - 1 \right]\\
&\phantom{xxxxx} = \frac{ 1 }{ 2 (1 - z_1 \bar w_2)^{\lambda} (1 - z_2 \bar w_1)^\lambda } \sum_{ n \geq 0 } { \binom{\lambda}{n + 1} } \frac{ (z_1 - z_2)^{ n } (\bar w_1 - \bar w_2)^n }{ ( 1 - z_1 \bar w_1 )^{ n + 1 } ( 1 - z_2 \bar w_2 )^{ n + 1 } }
\end{flalign*}
completing the proof of \eqref{eqncurv1bergker}. Finally, Equation \eqref{resBerg} follows by substituting $z_1 = z_2 = w_1 = w_2 = z$ in Equation \eqref{eqncurv1bergker}.
\end{proof}

A proof of the following proposition for $\lambda=2$, along a slightly different route, appears in \cite[Proposition 11]{EZ}.
\begin{prop} \label{nonvanishing}
$ B^{ ( \lambda ) } $ does not vanish on $ \mathbb{ G }_2 \times \mathbb G_2$.
\end{prop}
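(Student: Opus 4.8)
The plan is to first reduce the problem to the fundamental set and then convert the vanishing question into an injectivity statement for a principal power. Since the symmetrization map $\boldsymbol s:\mathbb D^2\to\mathbb G_2$ is onto, it suffices to show that the kernel $B^{(\lambda)}_{\boldsymbol s}$ of \eqref{defofwbergker} has no zeros on $\mathbb D^2\times\mathbb D^2$. By the quasi-invariance established in Corollary \ref{cor:4.5}, for any $\tilde\varphi\in\operatorname{Aut}(\mathbb G_2)$ the values $B^{(\lambda)}(\boldsymbol u,\boldsymbol v)$ and $B^{(\lambda)}(\tilde\varphi(\boldsymbol u),\tilde\varphi(\boldsymbol v))$ differ only by the nowhere-vanishing factors $\hat\phi$, so the zero set of $B^{(\lambda)}$ is invariant under the diagonal action of the automorphism group. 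As every orbit meets the fundamental set $\Lambda=\{(r,0):0\le r<1\}$, I may assume $\boldsymbol u=\boldsymbol s(r,0)$, i.e.\ take $z_1=r$ and $z_2=0$ in \eqref{defofwbergker}, while $\boldsymbol v=\boldsymbol s(w_1,w_2)$ is arbitrary.

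Second, I would dispose of the degenerate configurations using Lemma \ref{weightedbergkerexpansion}. When $w_1=w_2=w$ (so $\boldsymbol v\in\mathcal R$) or when $r=0$, every summand with $n\ge 1$ in \eqref{eqncurv1bergker} vanishes, leaving only the $n=0$ term, which yields the explicit nonzero values $\tfrac{\lambda}{2}(1-r\bar w)^{-(\lambda+1)}$ and $\tfrac{\lambda}{2}$ respectively, in the spirit of \eqref{resBerg}. For the remaining range $r\ne 0$ and $w_1\ne w_2$, substituting $z_2=0$ in \eqref{defofwbergker} collapses the kernel to
\[
B^{(\lambda)}_{\boldsymbol s}\big((r,0),(w_1,w_2)\big)=\frac{1}{2r(\bar w_1-\bar w_2)}\Big[\frac{1}{(1-r\bar w_1)^{\lambda}}-\frac{1}{(1-r\bar w_2)^{\lambda}}\Big],
\]
whose prefactor is finite and nonzero. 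Hence $B^{(\lambda)}_{\boldsymbol s}$ vanishes at such a point precisely when $(1-r\bar w_1)^{\lambda}=(1-r\bar w_2)^{\lambda}$.

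Finally, I would rule this equality out by an injectivity argument for the principal power. Both numbers $1-r\bar w_1$ and $1-r\bar w_2$ lie in the disc $\{\,s:|s-1|<1\,\}$, hence in the right half-plane, where the argument lies in $(-\tfrac\pi2,\tfrac\pi2)$ and the principal branch $s\mapsto s^{\lambda}$ is holomorphic. If $(1-r\bar w_1)^{\lambda}=(1-r\bar w_2)^{\lambda}$, then comparing moduli and arguments forces $|1-r\bar w_1|=|1-r\bar w_2|$ together with $\lambda\big(\arg(1-r\bar w_1)-\arg(1-r\bar w_2)\big)\in 2\pi\mathbb Z$; since the angular difference lies in $(-\pi,\pi)$, the power map is injective on this region, and the only possibility is $1-r\bar w_1=1-r\bar w_2$, i.e.\ $w_1=w_2$, contradicting the standing assumption. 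This would finish the argument.

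I expect the injectivity (branch-control) step to be the main obstacle. It amounts exactly to the requirement that two principal $\lambda$-th powers of points in a common half-plane cannot coincide, and it is here that the reduction to the fundamental set is indispensable: placing both factors $1-r\bar w_j$ in a single half-plane keeps their angular spread below $\pi$, so that the relevant phase remains within one period of the power map. Quantifying this spread against the size of $\lambda$ is the delicate point of the proof, and it is worth isolating it as a separate lemma on the injectivity of $s\mapsto s^{\lambda}$ on $\{|s-1|<1\}$ before assembling the three cases above.
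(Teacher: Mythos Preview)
Your reduction via quasi-invariance to the fundamental set is sound and cleaner than the paper's route (which simply asserts, without argument, that zeros of $B^{(\lambda)}_s$ can occur only when $\boldsymbol z\in\Delta$ or $\boldsymbol w\in\Delta$, and then evaluates there using Lemma~\ref{weightedbergkerexpansion}); your handling of the degenerate cases $w_1=w_2$ and $r=0$ is also correct.

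The gap is precisely where you flag it, and it is fatal rather than merely delicate: the map $s\mapsto s^\lambda$ is \emph{not} injective on $\{|s-1|<1\}$ once $\lambda>2$. The principal arguments $\phi_j=\operatorname{Arg}(1-r\bar w_j)$ lie in $(-\tfrac\pi2,\tfrac\pi2)$, so $\phi_1-\phi_2\in(-\pi,\pi)$; as soon as $2\pi/\lambda<\pi$ this interval contains $\pm 2\pi/\lambda$, and equal moduli with that angular gap can be realized inside the disc. Concretely, for $\lambda=3$ take $r=0.9$ and choose $\bar w_1,\bar w_2$ so that $1-r\bar w_1=\tfrac12 e^{i\pi/3}$ and $1-r\bar w_2=\tfrac12 e^{-i\pi/3}$; then $|\bar w_j|=\sqrt3/1.8<1$, $w_1\ne w_2$, yet $(1-r\bar w_1)^3=(1-r\bar w_2)^3=-\tfrac18$, so your bracket vanishes and $B^{(3)}_{\boldsymbol s}\big((0.9,0),(w_1,w_2)\big)=0$. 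This is an honest zero of $B^{(3)}$ off both diagonals, so the statement as written cannot hold for all $\lambda>0$; the paper's ``observe from Equation~\eqref{defofwbergker}'' step conceals the same unjustified claim. Your argument \emph{is} valid in the range $0<\lambda\le 2$, where $|\lambda(\phi_1-\phi_2)|<2\pi$ genuinely forces $\phi_1=\phi_2$, and in particular it recovers the Bergman case $\lambda=2$ of \cite{EZ}.
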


\begin{proof}
Let $B^{(\lambda)}_s : \mathbb D^2 \times \mathbb D^2 \to \mathbb C$ be the function defined by $B^{(\lambda)}_s (\boldsymbol{z}, \boldsymbol{w}) = B^{(\lambda)} (\boldsymbol{s} (\boldsymbol{z}), \boldsymbol{s}(\boldsymbol{w}))$, $\boldsymbol{z}, \boldsymbol{w} \in \mathbb D^2$. To prove that $B^{(\lambda)}$ does not vanish on $\mathbb G_2 \times \mathbb G_2$, it is enough to prove that $B^{(\lambda)}_s$ does not vanish on $\mathbb D^2 \times \mathbb D^2$. 

First, observe from Equation \eqref{defofwbergker} that the zero set of $ B^{ ( \lambda ) }_s $ is contained in $ ( \Delta \times \mathbb{ D }^2 ) \cup ( \mathbb{ D }^2 \times \Delta ) $. Therefore, it is enough to check that $ B^{ ( \lambda ) }_s ( \boldsymbol{ z }, \boldsymbol{ w } ) \neq 0 $ for $ ( \boldsymbol{ z }, \boldsymbol{ w } ) \in (\Delta \times \mathbb{ D }^2 ) \cup ( \mathbb{ D }^2 \times \Delta ) $.

Substituting $\boldsymbol{z}$ by $ ( z, z ) \in \Delta $ in Equation \eqref{eqncurv1bergker}, we obtain
$$ B^{ ( \lambda ) }_s ( ( z, z ), \boldsymbol{ w } ) =  \frac{ \lambda }{ 2 ( 1 - z \overline{ w_1 } )^{\lambda + 1} ( 1 - z \overline{ w _2 } )^{\lambda + 1} }, $$
verifying that $  B^{ ( \lambda ) }_s ( ( z, z ), \boldsymbol{ w } ) \neq 0 $. A similar computation with $ \boldsymbol{ z } \in \mathbb{ D }^2 $ and $ ( w, w ) \in \Delta $ yields that $ B^{ ( \lambda ) }_s ( \boldsymbol{ z }, ( w, w ) ) \neq 0 $. Thus $ B^{ ( \lambda ) }_s $ does not vanish on $ \mathbb{ D }^2 \times \mathbb D^2$.
\end{proof}

Since $\mathbb G_2$ is simply connected \cite[Theorem 2.3]{AY} and $ B^{ ( \lambda ) } $ is non-vanishing, by taking the principal branch of logarithm of $ B^{ ( \lambda ) } $ on $ \mathbb{ G }_2 \times \mathbb{ G }_2 $, it follows that $(B^{(\lambda)})^\nu$ is well defined for $\nu>0$. For $\lambda > 0$ and $\nu \in \mathcal{W}_{\mathbb G_2, a}(B^{(\lambda)})$, the Hilbert space $\mathbb{A}^{(\lambda, \nu)}$ determined by the reproducing kernel $(B^{(\lambda)})^\nu$ is an analytic Hilbert module. We now show, for $\nu \in \mathcal{W}_{\mathbb G_2, a}(B^{(\lambda)})$, these are  Aut$(\mathbb G_2)$-homogeneous analytic Hilbert modules $\mathbb{A}^{(\lambda, \nu)}(\mathbb{G}_2)$.

\begin{prop}\label{powerwtdberg}
For $\lambda > 0$ and $\nu \in \mathcal{W}_{\mathbb G_2, a}(B^{(\lambda)})$,
the analytic Hilbert module  $\mathbb{A}^{(\lambda, \nu)}$ is Aut$(\mathbb G_2)$-homogeneous.  
\end{prop}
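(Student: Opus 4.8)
The plan is to invoke Theorem \ref{thm:2.1-Kquasi}: since the assumption $\nu \in \mathcal{W}_{\mathbb{G}_2, a}(B^{(\lambda)})$ guarantees that $(\mathbb{A}^{(\lambda,\nu)}, (B^{(\lambda)})^\nu)$ is an analytic Hilbert module, it is $\operatorname{Aut}(\mathbb{G}_2)$-homogeneous if and only if its reproducing kernel $(B^{(\lambda)})^\nu$ is quasi-invariant. Thus the whole task reduces to upgrading the quasi-invariance of $B^{(\lambda)}$ already established in Corollary \ref{cor:4.5} (equivalently Equation \eqref{quasiinvproofbergker1}) to the $\nu$-th power of that kernel.

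First I would record that $(B^{(\lambda)})^\nu$ is genuinely well-defined: by Proposition \ref{nonvanishing} the kernel $B^{(\lambda)}$ is non-vanishing on $\mathbb{G}_2 \times \mathbb{G}_2$, and since $\mathbb{G}_2$ is simply connected \cite[Theorem 2.3]{AY}, the principal branch of $\log B^{(\lambda)}$ exists, making $(B^{(\lambda)})^\nu$ holomorphic in the first variable and anti-holomorphic in the second. The same remarks apply to the multiplier $\hat\phi$, a non-vanishing holomorphic function on $\mathbb{G}_2$, so that for each fixed $\tilde\varphi$ the function $J_\nu(\tilde\varphi, \boldsymbol{u}) := \hat\phi(\boldsymbol{u})^{\nu(\lambda+1)/2}$ is a well-defined non-vanishing holomorphic function on $\mathbb{G}_2$ (in fact a projective cocycle). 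The candidate identity, obtained formally by raising Equation \eqref{quasiinvproofbergker1} to the power $\nu$, is
\[
(B^{(\lambda)})^\nu(\tilde\varphi(\boldsymbol{u}), \tilde\varphi(\boldsymbol{v})) = J_\nu(\tilde\varphi, \boldsymbol{u})^{-1} (B^{(\lambda)})^\nu(\boldsymbol{u}, \boldsymbol{v}) \overline{J_\nu(\tilde\varphi, \boldsymbol{v})^{-1}}.
\]

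The hard part will be justifying that raising to the $\nu$-th power distributes across the product without introducing a spurious branch factor $e^{2\pi i k \nu}$. I would handle this exactly as in the proof of Theorem \ref{cor:2.2}: set
\[
\psi(\boldsymbol{u}, \boldsymbol{v}) := (B^{(\lambda)})^\nu(\tilde\varphi(\boldsymbol{u}), \tilde\varphi(\boldsymbol{v})) - J_\nu(\tilde\varphi, \boldsymbol{u})^{-1} (B^{(\lambda)})^\nu(\boldsymbol{u}, \boldsymbol{v}) \overline{J_\nu(\tilde\varphi, \boldsymbol{v})^{-1}},
\]
which is holomorphic in $\boldsymbol{u}$ and anti-holomorphic in $\boldsymbol{v}$. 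On the diagonal $\boldsymbol{u} = \boldsymbol{v}$ every factor is an honest positive real power: indeed $B^{(\lambda)}(\boldsymbol{u}, \boldsymbol{u}) > 0$ and $J_\nu(\tilde\varphi, \boldsymbol{u})^{-1}\overline{J_\nu(\tilde\varphi, \boldsymbol{u})^{-1}} = |\hat\phi(\boldsymbol{u})|^{-\nu(\lambda+1)}$, so raising the diagonal form of Corollary \ref{cor:4.5} to the power $\nu$ is unambiguous and gives $\psi(\boldsymbol{u}, \boldsymbol{u}) = 0$ for all $\boldsymbol{u} \in \mathbb{G}_2$. By the rigidity of functions holomorphic in the first and anti-holomorphic in the second variable that vanish on the diagonal \cite[Proposition 1, page 10]{dang}, we conclude $\psi \equiv 0$, which establishes the displayed quasi-invariance. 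Theorem \ref{thm:2.1-Kquasi} then yields the $\operatorname{Aut}(\mathbb{G}_2)$-homogeneity of $\mathbb{A}^{(\lambda,\nu)}$, completing the argument.
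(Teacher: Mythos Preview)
Your proposal is correct and follows the same route as the paper: both arguments reduce the claim, via Theorem~\ref{thm:2.1-Kquasi}, to the quasi-invariance of $(B^{(\lambda)})^\nu$, which is inherited from that of $B^{(\lambda)}$ (Corollary~\ref{cor:4.5}). The paper's proof simply asserts this inheritance in one line, whereas you supply the extra care of ruling out a spurious branch factor via the diagonal-vanishing argument of \cite[Proposition~1, p.~10]{dang}; this is a welcome refinement but not a different strategy.
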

\begin{proof}
    By hypothesis, $(B^{(\lambda)})^\nu$ is non-negative definite and the associated Hilbert space $\mathbb{A}^{(\lambda, \nu)}$ is analytic as $\nu \in \mathcal{W}_{\mathbb G_2, a}(B^{(\lambda)})$. 
    Since $B^{(\lambda)}$, $\lambda >0$, is a quasi-invariant non-negative definite kernel, it follows that $(B^{(\lambda)})^\nu$ is also quasi-invariant. Therefore, $\mathbb{A}^{(\lambda, \nu)}$ is homogeneous. 
\end{proof}

\begin{remark} \label{rem:4.11} Recalling \cite[Proposition 2.1]{AKGM}, we see from Equation \eqref{quasiinvproofbergker} that the map $U_{\tilde{\varphi}} : \mathbb{A}^{(\lambda, \nu)} \to \mathbb{A}^{(\lambda, \nu)}$, defined by
\[\left(U_{\tilde{\varphi}} f\right) (\boldsymbol{s}(\boldsymbol{z})) :=
\left(\hat{\phi}(\boldsymbol s( \boldsymbol{ z }))\right)^{\frac{\lambda + 1}{2}} 
f\left(\tilde{\varphi}(\boldsymbol{s}(\boldsymbol{z}))\right),\,\,f \in \mathbb{A}^{(\lambda, \nu)},\,\,\boldsymbol{z} = (z_1, z_2) \in \mathbb D^2\]
is unitary for every $\tilde{\varphi} \in \rm{Aut}(\mathbb G_2)$. We have pointed out earlier that the map $J : \rm{Aut}(\mathbb G_2) \times \mathbb G_2 \to \mathbb C$, defined by
\[J(\tilde{\varphi}, \boldsymbol{s}(\boldsymbol{z})) := 
\hat{\phi}(\boldsymbol s( \boldsymbol{ z })),\,\,\tilde{\varphi}\in\operatorname{Aut}(\mathbb{G}_2), \,\, \boldsymbol{s}(\boldsymbol{z}) \in \mathbb{G}_2,\]
is a cocycle and in consequence, so is $J^{\frac{\lambda+1}{2}}$. Therefore, $\tilde{\varphi} \mapsto U_{\tilde{\varphi}}$ defines a projective multiplier representation of \rm{Aut}$(\mathbb G_2)$. By the remark immediately following \cite[Proposition 2.1]{AKGM}, we infer that 
\[U_{\tilde{\varphi}}\tilde{\varphi}\big (\boldsymbol{M}^{(\lambda, \nu)}\big ) = \boldsymbol{M}^{(\lambda, \nu)} U_{\tilde{\varphi}},\,\, \tilde{\varphi} \in \operatorname{Aut}(\mathbb G_2),\] 
where $\boldsymbol{M}^{(\lambda, \nu)}$ is the pair of multiplication operators by the coordinate functions on the Hilbert space $\mathbb A^{(\lambda, \nu)}$. 
A factorization of the kernel $(B^{(\lambda)})^\nu$, analogous to the one given in Equation \eqref{quasiinvproofbergker1} (or, Corollary \ref{cor:4.5}) for $B^{(\lambda)}$, is now evident.

 We give an interesting decomposition of the unitary representation $U$ into irreducible components. Let $C_{\boldsymbol{s}} : \mathbb A^{(\lambda, \nu)} \to \mbox{\rm Hol}(\mathbb D^2, \mathbb{C})$ be given by $(C_{\boldsymbol{s}}f)(\boldsymbol{z}) = f(\boldsymbol{s}(\boldsymbol{z}))$, $f \in \mathbb A^{(\lambda, \nu)}$, $\boldsymbol{z} \in \mathbb D^2$. Declare $C_{\boldsymbol{s}}$ to be a unitary map onto its image $\mathcal{H}^{(\lambda, \nu)} := \mbox{\rm Im}\,\, C_{\boldsymbol{s}}$. Then $\tilde{U} = C_{\boldsymbol{s}} U C_{\boldsymbol{s}}^*$ defines a projective multiplier representation of M\"ob on $\mathcal{H}^{(\lambda,\nu)}$. Note that the Hilbert space $\mathcal{H}^{(\lambda,\nu)}$ consists of symmetric functions over $\mathbb{D}^2$ and contains all the symmetric polynomials. Therefore, \cite[Theorem 2.8(ii)]{DAH} ensures the existence of $\alpha > 0$ such that $\tilde{U}$ is unitarily equivalent to $\oplus_{m = 0}^\infty D^+_{\alpha + 4m}$.
\end{remark}   

\subsection{The Hilbert modules \texorpdfstring{$\mathbb H^{(\lambda, \nu)}$}{AB} over \texorpdfstring{$ \mathbb{ G }_2$}{BB}}
Let $\mathbb B^{(\lambda)}$ denote the curvature matrix of the weighted Bergman kernel $B^{(\lambda)}$ of $\mathbb G_2$, namely,   
\[\mathbb B^{(\lambda)}(w) = \left( \!\!\! \left (
\frac{~\partial^2}{\partial{w_i}
\partial{\overline w_j}}\log B^{(\lambda)}(w,w)\right )\!\!\!\right)_{i,j=1}^{2}, w\in \mathbb G_2.\] In what follows, we let $\mathbb B^{(\lambda, \nu)}$ denote the $2\times 2$ matrix $(B^{(\lambda)})^{\nu+2} \mathbb B^{( \lambda) }$. If $\nu \in \mathcal W_{\mathbb G_2}(B^{(\lambda)}) \cup \{ 0 \} $, then $\mathbb B^{(\lambda, \nu)}$, being a product of two non-negative definite kernels $(B^{(\lambda)})^\nu$ and $(B^{(\lambda)})^2\mathbb B^{(\lambda)}$, is a non-negative definite kernel. Let $\mathbb H^{(\lambda, \nu)}$ be the Hilbert space with the reproducing kernel $\det \mathbb B^{(\lambda, \nu)}$.

\begin{theorem}\label{detkernelofberg}
For $\lambda > 0$ and $\nu \in \mathcal{W}_{\mathbb G_2, a}(B^{(\lambda)}) \cup \{ 0 \} $, the Hilbert space $\mathbb H^{(\lambda, \nu)}$ is an \rm{Aut}$(\mathbb G_2)$-homogeneous analytic Hilbert module.   
\end{theorem}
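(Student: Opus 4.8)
The plan is to recognize $\mathbb{H}^{(\lambda,\nu)}$ as a special case of the determinantal module $\mathbb{H}^{[\cdot]}_{\det}$ produced by Theorem \ref{det theorem}, applied with the weighted Bergman kernel $B^{(\lambda)}$ playing the role of the homogeneous kernel $K$. Comparing the definition $\mathbb{B}^{(\lambda,\nu)} = (B^{(\lambda)})^{\nu+2}\mathbb{B}^{(\lambda)}$ with the kernel $\mathbb{K}^{[\lambda]}(z,w) = K(z,w)^{\lambda+2}\big(\!\!\big(\partial_i\overline{\partial}_j \log K(z,w)\big)\!\!\big)$, I observe that, on setting $K = B^{(\lambda)}$ and letting the exponent parameter be $\nu$, the matrix $\mathbb{B}^{(\lambda,\nu)}$ coincides exactly with $\mathbb{K}^{[\nu]}$. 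Hence $\det \mathbb{B}^{(\lambda,\nu)} = \det \mathbb{K}^{[\nu]}$, and $\mathbb{H}^{(\lambda,\nu)}$ is precisely the Hilbert space $\mathbb{H}^{[\nu]}_{\det}$ of Theorem \ref{det theorem}.

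First I would check the hypotheses needed to invoke Theorem \ref{det theorem} with $G = \operatorname{Aut}(\mathbb{G}_2)$. By Proposition \ref{homogofweightedbergker}, the pair $\big(\mathbb{A}^{(\lambda)}(\mathbb{G}_2), B^{(\lambda)}\big)$ is an $\operatorname{Aut}(\mathbb{G}_2)$-homogeneous analytic Hilbert module, so $B^{(\lambda)}$ is an admissible choice of homogeneous kernel $K$. By Proposition \ref{nonvanishing}, $B^{(\lambda)}$ is non-vanishing on $\mathbb{G}_2\times\mathbb{G}_2$, and since $\mathbb{G}_2$ is simply connected, the principal branch of $\log B^{(\lambda)}$, and therefore $\mathbb{K}^{[\nu]}$, is well-defined. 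For $\nu \in \mathcal{W}_{\mathbb{G}_2,a}(B^{(\lambda)})$ the remaining hypothesis of Theorem \ref{det theorem} holds by assumption, and the theorem then yields directly that $\mathbb{H}^{(\lambda,\nu)}$ is an $\operatorname{Aut}(\mathbb{G}_2)$-homogeneous analytic Hilbert module.

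It remains to treat the endpoint $\nu = 0$, which is not literally a member of the Wallach-type set $\mathcal{W}_{\mathbb{G}_2,a}(B^{(\lambda)})$. In this case $\mathbb{B}^{(\lambda,0)} = (B^{(\lambda)})^2\mathbb{B}^{(\lambda)} = \mathbb{K}^{[0]}$, and I would argue exactly as in the $\mathbb{K}^{[0]}$ remark following Theorem \ref{det theorem}: the kernel $\mathbb{K}^{[0]}$ is non-negative definite by \cite[Corollary 2.4]{GM}, and because $B^{(\lambda)}$ is analytic, $\mathbb{C}[\boldsymbol{z}]\otimes\mathbb{C}^2$ is dense in the associated space by \cite[Proposition 3.4]{GM}. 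The determinantal passage provided by Lemmas \ref{lem:2.5}--\ref{lem:2.7} then applies unchanged, while the quasi-invariance of $\det \mathbb{K}^{[0]}$ follows by taking determinants in the transformation law \eqref{eqn:2.7.1} with the exponent specialized to $0$, just as in the proof of Theorem \ref{det theorem}.

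I expect no genuine obstacle here: once the identity $\mathbb{B}^{(\lambda,\nu)} = \mathbb{K}^{[\nu]}$ is recorded, the statement is essentially a corollary of Theorem \ref{det theorem} together with Propositions \ref{homogofweightedbergker} and \ref{nonvanishing}. The only point demanding a separate sentence is the $\nu = 0$ case, where the non-negative definiteness of $\mathbb{K}^{[0]}$ must be cited from \cite[Corollary 2.4]{GM} rather than inferred from membership in $\mathcal{W}_{\mathbb{G}_2,a}(B^{(\lambda)})$.
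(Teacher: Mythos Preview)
Your proposal is correct and follows essentially the same approach as the paper: invoke Proposition \ref{homogofweightedbergker} to see that $(\mathbb{A}^{(\lambda)}(\mathbb{G}_2), B^{(\lambda)})$ is an $\operatorname{Aut}(\mathbb{G}_2)$-homogeneous analytic Hilbert module, identify $\mathbb{B}^{(\lambda,\nu)}$ with $\mathbb{K}^{[\nu]}$, and then apply Theorem \ref{det theorem}. If anything, you are slightly more thorough than the paper's two-line proof, since you explicitly cite Proposition \ref{nonvanishing} for the well-definedness of $\mathbb{K}^{[\nu]}$ and spell out the $\nu=0$ endpoint (which the paper handles only by the remark in the introduction that ``the same argument would show that $(\mathbb{H}^{[0]}_{\det}, \det(\mathbb{K}^{[0]}))$ is a homogeneous analytic Hilbert module'').
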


\begin{proof}
The fact that the Hilbert space $\mathbb A^{(\lambda)}(\mathbb G_2)$ with the reproducing kernel $B^{(\lambda)}$ is an Aut$(\mathbb G_2)$-homogeneous analytic Hilbert module follows form Proposition \ref{homogofweightedbergker}. Consequently, the proof of the theorem follows from Theorem \ref{det theorem}.    
\end{proof}
\begin{remark}
Again, a factorization formula for $\det \mathbb B^{(\lambda, \nu)}$, analogous to the one in Corollary \ref{cor:4.5} is the following:
\begin{flalign*}
    \det \mathbb B^{(\lambda, \nu)} (\boldsymbol{u}, \boldsymbol{v}) &= (\hat{\phi}(\boldsymbol{u}))^{(\lambda + 1)(\nu + 2)} \det \left(D \tilde{\varphi}(\boldsymbol{u})\right)\\ 
    &\phantom{aaaaaaaa}\det \mathbb B^{(\lambda, \nu)} (\tilde{\varphi}(\boldsymbol{u}), \tilde{\varphi}(\boldsymbol{v}) ) \overline{(\hat{\phi}(\boldsymbol{v}))^{(\lambda + 1)(\nu + 2)} \det \left(D \tilde{\varphi}(\boldsymbol{v})\right)},\,\,\boldsymbol{u}, \boldsymbol{v} \in \mathbb{G}_2,
\end{flalign*}
establishing that $\mathbb{H}^{(\lambda, \nu)}$ is quasi-invariant. 
\end{remark}

\section{Inequivalence}

In the preceding section, we have introduced two  natural classes of Aut$ ( \mathbb{ G }_2 ) $-homogeneous analytic Hilbert modules,  namely, $\{\mathbb{A}^{(\lambda, \nu)} : \lambda > 0\,\,\mbox{and}\,\,\nu \in \mathcal{W}_{\mathbb G_2, a}(B^{(\lambda)})\}$ and $\{\mathbb H^{(\mu, \eta)} : \mu > 0\,\,\mbox{and}\,\,\eta \in \mathcal{W}_{\mathbb G_2, a}(B^{(\mu)}) \cup \{ 0 \}\}$. We now prove that the Hilbert modules in each of these two classes are unitarily inequivalent among themselves. What is more is that none of $ \mathbb{A}^{(\lambda, \nu)} $ is unitarily equivalent to any of $ \mathbb H^{(\mu, \eta)} $. The main technique used to prove these results is to compare the curvatures of the corresponding Hermitian holomorphic line bundles. First, we must compute the curvature. This computation for the unweighted Bergman kernel is in \cite{CY}.

For the curvature computation involving the weighted Bergman kernel, it will be useful to set up the notation
$B^{(\lambda)}_{\boldsymbol{s}} : \mathbb D^2 \times \mathbb D^2 \to \mathbb C$, defined by $B^{(\lambda)}_{\boldsymbol s}(\boldsymbol{z}, \boldsymbol{w}) = B^{(\lambda)}(\boldsymbol s(\boldsymbol{z}), \boldsymbol s(\boldsymbol{w})),$ $\boldsymbol{z}, \boldsymbol{w} \in \mathbb D^2$. Note that $B^{(\lambda)}_{\boldsymbol s}$ is non-negative definite. Similarly, $\mathbb B^{(\lambda)}_{\boldsymbol{s}}$ denotes the curvature matrix corresponding to $B^{(\lambda)}_{\boldsymbol s}$. 

\begin{prop}\label{curv}
 The  entries $\mathbb B_{i \bar j}^{(\lambda)}$, $1 \leq i, j\leq 2$, of the curvature matrix $\mathbb B^{(\lambda)}$ are given by 
\begin{align*}
& \mathbb B^{(\lambda)}_{1\bar{1}}(z,0) = b_{1\bar{1}}^{(\lambda)}(z,0), \hspace{0.2in} \mathbb B^{(\lambda)}_{2\bar{2}}(z,0) = \frac{1}{|z|^2} \left[ b_{1\bar{1}}^{(\lambda)}(z,0) - 2 b^{ ( \lambda ) }_{ 1 \bar{2}}( z, 0 ) + b_{2\bar{2}}^{(\lambda)}(z,0) \right] \\
& \mathbb B^{(\lambda)}_{1\bar{2}}(z,0) = \overbar{\mathbb B^{(\lambda)}_{2\bar{1}}(z,0)} = \frac{1}{z} \left[ b^{ ( \lambda ) }_{ 1 \bar{2}}( z, 0 ) -  b^{ ( \lambda ) }_{ 1 \bar{1}}( z, 0 ) \right] ,\,\,z \in \mathbb D,
\end{align*} 
where $b_{i \bar j}^{(\lambda)}$, $1 \leq i, j \leq 2$, are the entries of the curvature matrix $\mathbb B^{(\lambda)}_{\boldsymbol{s}}$ given by 
\begin{align*}
& b_{1\bar{1}}^{(\lambda)}(z,0) = \frac{\lambda\left[1 - (1-|z|^2)^\lambda ( 1 + \lambda | z |^2 ) \right]}{(1 - |z|^2)^2 \left[1 - (1 - |z|^2)^\lambda \right]^2}, \hspace{0.2in}  b_{2\bar{2}}^{(\lambda)}(z,0) = ( 1 - | z |^2 )^2 b_{1\bar{1}}^{(\lambda)}(z,0) \\
& b_{1\bar{2}}^{(\lambda)}(z,0) = b_{2\bar{1}}^{(\lambda)}(z,0) = \frac{\lambda ( 1 - |z|^2 )^{ \lambda }\left[\lambda |z|^2 - (1-|z|^2) + (1 - |z|^2)^{ \lambda + 1 } \right]}{(1 - |z|^2) \left[1 - (1 - |z|^2)^\lambda \right]^2},\,\,z \in \mathbb D.
\end{align*}
\end{prop}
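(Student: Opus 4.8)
The plan is to transport the whole computation from $\mathbb{G}_2$, where $B^{(\lambda)}$ is only given implicitly, to the bidisc $\mathbb{D}^2$, where Lemma~\ref{weightedbergkerexpansion} supplies an explicit power series for $B^{(\lambda)}_{\boldsymbol{s}}(\boldsymbol{z},\boldsymbol{w})=B^{(\lambda)}(\boldsymbol{s}(\boldsymbol{z}),\boldsymbol{s}(\boldsymbol{w}))$. The argument then splits into two essentially independent halves: (i) express each entry $\mathbb{B}^{(\lambda)}_{i\bar j}(z,0)$ in terms of the bidisc entries $b^{(\lambda)}_{i\bar j}(z,0)$ through the Jacobian of the symmetrization map, and (ii) compute the $b^{(\lambda)}_{i\bar j}(z,0)$ directly from the series.

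For (i), since $\log B^{(\lambda)}_{\boldsymbol{s}}(\boldsymbol{z},\boldsymbol{z})=\log B^{(\lambda)}(\boldsymbol{s}(\boldsymbol{z}),\boldsymbol{s}(\boldsymbol{z}))$ and $\boldsymbol{s}$ is holomorphic, the same chain-rule computation on the Hermitian Hessian of $\log$ that underlies Proposition~\ref{curvature transformation} gives the transformation law $\mathbb{B}^{(\lambda)}_{\boldsymbol{s}}(\boldsymbol{z}) = D\boldsymbol{s}(\boldsymbol{z})^{\operatorname{tr}}\,\mathbb{B}^{(\lambda)}(\boldsymbol{s}(\boldsymbol{z}))\,\overline{D\boldsymbol{s}(\boldsymbol{z})}$, i.e.\ Equation~\eqref{eqn:2.1} with $\boldsymbol{s}$ in the role of $g^{-1}$. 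Inverting and specializing to the preimage $(z,0)\in\mathbb{D}^2$ of the fundamental-set point $(z,0)=\boldsymbol{s}(z,0)\in\mathbb{G}_2$, where $D\boldsymbol{s}(z,0)=\bigl(\begin{smallmatrix}1&1\\0&z\end{smallmatrix}\bigr)$, I would read off $\mathbb{B}^{(\lambda)}(z,0)=(D\boldsymbol{s}(z,0)^{\operatorname{tr}})^{-1}\,\mathbb{B}^{(\lambda)}_{\boldsymbol{s}}(z,0)\,(\overline{D\boldsymbol{s}(z,0)})^{-1}$. Multiplying out these $2\times 2$ matrices produces the four stated relations: the factors $1/z$ and $1/|z|^2$ arise from the inverse Jacobian (whose determinant is $z$), and the Hermitian symmetry $\mathbb{B}^{(\lambda)}_{1\bar2}=\overline{\mathbb{B}^{(\lambda)}_{2\bar1}}$ becomes automatic once the identity $b^{(\lambda)}_{1\bar2}=b^{(\lambda)}_{2\bar1}$ from part (ii) is in hand.

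For (ii), I would take the logarithm of the series in Equation~\eqref{eqncurv1bergker}, writing $\log B^{(\lambda)}_{\boldsymbol{s}}=\text{const}-\lambda\log(1-z_1\bar w_2)-\lambda\log(1-z_2\bar w_1)+\log\Sigma$, where $\Sigma=\sum_{n\ge 0}\binom{\lambda}{n+1}\tfrac{(z_1-z_2)^n(\bar w_1-\bar w_2)^n}{(1-z_1\bar w_1)^{n+1}(1-z_2\bar w_2)^{n+1}}$. The two cross terms contribute only to the off-diagonal mixed derivatives and are immediate; the genuine work is $\partial_i\bar\partial_j\log\Sigma=\Sigma^{-1}\partial_i\bar\partial_j\Sigma-\Sigma^{-2}(\partial_i\Sigma)(\bar\partial_j\Sigma)$. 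The decisive simplification is that $z_2=w_2=0$ is imposed only after differentiation: there $\Sigma$ and its first and mixed second derivatives collapse to closed forms, since $\sum_n\binom{\lambda}{n+1}\tfrac{|z|^{2n}}{(1-|z|^2)^{n+1}}=\tfrac{1-(1-|z|^2)^\lambda}{|z|^2(1-|z|^2)^\lambda}$ and the differentiated series are elementary sums of the same type. This explains why the denominators $[1-(1-|z|^2)^\lambda]^2$, coming from $\Sigma^{-2}$, appear in the answer, and it yields $b^{(\lambda)}_{1\bar1}$ together with the structural identities $b^{(\lambda)}_{2\bar2}=(1-|z|^2)^2 b^{(\lambda)}_{1\bar1}$ and $b^{(\lambda)}_{1\bar2}=b^{(\lambda)}_{2\bar1}$; Equation~\eqref{resBerg} serves as a consistency check on the diagonal.

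The main obstacle will be the bookkeeping in (ii): organizing the term-by-term differentiation of $\log\Sigma$ so that each surviving series at $z_2=w_2=0$ is recognized as one of the elementary sums above, and correctly tracking which indices $n$ contribute under each of $\partial_{z_1},\partial_{z_2},\bar\partial_{w_1},\bar\partial_{w_2}$. Part (i) is routine linear algebra; the only subtlety there is the removable singularity at $z=0$, which I would handle by continuity or by recovering the $z=0$ values directly from Equation~\eqref{resBerg}.
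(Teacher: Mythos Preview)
Your plan is correct, and part~(i) is exactly what the paper does: the relation $\mathbb{B}^{(\lambda)}_{\boldsymbol{s}}(\boldsymbol{z})=D\boldsymbol{s}(\boldsymbol{z})^{\operatorname{tr}}\,\mathbb{B}^{(\lambda)}(\boldsymbol{s}(\boldsymbol{z}))\,\overline{D\boldsymbol{s}(\boldsymbol{z})}$ is inverted at $(z,0)$ with $D\boldsymbol{s}(z,0)=\bigl(\begin{smallmatrix}1&1\\0&z\end{smallmatrix}\bigr)$, and the four displayed relations drop out (with the $z=0$ case handled by continuity).

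Part~(ii) is where you diverge from the paper. The paper does not use the series from Lemma~\ref{weightedbergkerexpansion} at all; it works directly with the closed form for $B^{(\lambda)}_{\boldsymbol{s}}(\boldsymbol{z},\boldsymbol{w})$ coming from Equation~\eqref{defofwbergker}, computes $\partial_i B^{(\lambda)}_{\boldsymbol{s}}$, $\bar\partial_j B^{(\lambda)}_{\boldsymbol{s}}$ and $\partial_i\bar\partial_j B^{(\lambda)}_{\boldsymbol{s}}$ one by one, evaluates each at $((z,0),(z,0))$, and then feeds these into the quotient formula $b_{i\bar j}=(B\,\partial_i\bar\partial_jB-\partial_iB\,\bar\partial_jB)/B^2$. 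Your route---taking the logarithm of the series, peeling off the two cross factors $-\lambda\log(1-z_1\bar w_2)-\lambda\log(1-z_2\bar w_1)$ (which, as you say, contribute only the constant $\lambda$ to $b_{1\bar 2}$ and $b_{2\bar 1}$), and computing $\partial_i\bar\partial_j\log\Sigma$ via closed-form summation---is a legitimate alternative that explains structurally why the denominator $[1-(1-|z|^2)^\lambda]^2$ appears. What it buys you is that the easy pieces are isolated up front; what the paper's approach buys is that it is valid for every $z\in\mathbb{D}$ from the start, whereas the series in Equation~\eqref{eqncurv1bergker} converges at $((z,0),(z,0))$ only when $|z|^2/(1-|z|^2)<1$, i.e.\ $|z|^2<1/2$. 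You should therefore argue the full range by real-analytic continuation of the closed forms you obtain, a point you gesture at when discussing the removable singularity at $z=0$.
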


\begin{proof}
For $1 \leq i, j \leq 2$ and $\boldsymbol{z} \in \mathbb D^2$, note that 
\begin{flalign}\label{curvcom1}
   \nonumber b_{i\bar{j}}^{(\lambda)} (\boldsymbol{z}) &= \frac{\partial^2}{\partial z_i \partial \bar{z}_j} \log B^{(\lambda)}_s(\boldsymbol{z}, \boldsymbol{z})\\
    &= \frac{B^{(\lambda)}_s(\boldsymbol{z}, \boldsymbol{z}) \frac{\partial^2}{\partial z_i \partial \bar{z}_j} B^{(\lambda)}_s(\boldsymbol{z}, \boldsymbol{z}) - \frac{\partial}{\partial z_i} B^{(\lambda)}_s(\boldsymbol{z}, \boldsymbol{z}) \frac{\partial}{\partial \bar{z}_j} B^{(\lambda)}_s(\boldsymbol{z}, \boldsymbol{z}) }{\left(B^{(\lambda)}_s(\boldsymbol{z}, \boldsymbol{z})\right)^2}.
\end{flalign}

Replacing $\boldsymbol{z}$ and $\boldsymbol{w}$ by $(z, 0)$, $z \in \mathbb D$, in Equation \eqref{defofwbergker}, we obtain
$$B^{(\lambda)}_s((z,0), (z, 0)) = \frac{1 - (1 - |z|^2)^\lambda}{2|z|^2(1 - |z|^2)^\lambda}.$$

Let $z$ be an arbitrary element of $\mathbb D$. From a direct computation, we obtain the followings:
\begin{align*}
& \partial_1 B^{(\lambda)}_{\boldsymbol s} ((z, 0), (z, 0)) = \frac{1}{2|z|^2} \left[ \frac{\lambda \bar z}{(1 - |z|^2)^{\lambda + 1}} - \frac{1}{z(1 - |z|^2)^\lambda} + \frac{1}{z} \right],\\
& \bar{\partial}_1 B^{(\lambda)}_{\boldsymbol s} ((z, 0), (z, 0)) = \frac{1}{2|z|^2} \left[ \frac{\lambda z}{(1 - |z|^2)^{\lambda + 1}} - \frac{1}{\bar{z}(1 - |z|^2)^\lambda} + \frac{1}{\bar{z}} \right],\\
& \partial_2 B^{(\lambda)}_{\boldsymbol s} ((z, 0), (z, 0)) = \frac{1}{2|z|^2} \left[ \frac{1}{z(1 - |z|^2)^{\lambda}} - \frac{1}{z} - \lambda \bar z\right],\\
& \bar{\partial}_2 B^{(\lambda)}_{\boldsymbol s} ((z, 0), (z, 0)) = \frac{1}{2|z|^2} \left[ \frac{1}{\bar{z}(1 - |z|^2)^{\lambda}} - \frac{1}{\bar{z}} - \lambda z\right],\\
\end{align*}
\begin{align*}
& \partial_1  \bar{\partial}_1 B^{(\lambda)}_{\boldsymbol s} ((z, 0), (z, 0)) = \frac{1}{2|z|^2} \left[ \frac{\lambda |z|^2(\lambda |z|^2 - 1) + (|z|^2 - 1)^2 + 2 \lambda |z|^4}{|z|^2(1 - |z|^2)^{\lambda + 2}} - \frac{1}{|z|^2} \right],\\
& \partial_1  \bar{\partial}_2 B^{(\lambda)}_{\boldsymbol s} ((z, 0), (z, 0)) = \frac{1}{2|z|^2} \left[ \frac{1}{|z|^2} - \frac{1}{|z|^2(1 - |z|^2)^\lambda} + \frac{\lambda}{(1 - |z|^2)^{\lambda + 1}} \right]\\
&\hspace{3.88cm} = \partial_2 \bar{\partial}_1 B^{(\lambda)}_{\boldsymbol s} ((z, 0), (z, 0)),\\
& \partial_2  \bar{\partial}_2 B^{(\lambda)}_{\boldsymbol s} ((z, 0), (z, 0)) = \frac{1}{2|z|^2} \left[\frac{1}{|z|^2(1 - |z|^2)^\lambda} + \frac{\lambda}{(1 - |z|^2)^{\lambda}} - \lambda^2|z|^2 - 2\lambda\right].
\end{align*}
Substituting the values of $B^{(\lambda)}_{\boldsymbol s}((z, 0), (z, 0))$, $\partial_i B^{(\lambda)}_{\boldsymbol s}((z, 0), (z, 0))$, $\bar \partial_j B^{(\lambda)}_{\boldsymbol s}((z, 0), (z, 0))$ and $\partial_i \bar \partial_j B^{(\lambda)}_{\boldsymbol s}((z, 0), (z, 0))$ in Equation \eqref{curvcom1}, we obtain the expression of $b_{i \bar j}^{(\lambda)}(z, 0)$, for $1 \leq i, j \leq 2$, as it is stated in the statement of this proposition.

Since $B^{(\lambda)}_{\boldsymbol s}(\boldsymbol{z}, \boldsymbol{w}) = B^{(\lambda)} (\boldsymbol s(\boldsymbol{z}), \boldsymbol s(\boldsymbol{w}))$, $\boldsymbol{z}, \boldsymbol{w} \in \mathbb D^2$, an application of the chain rule yields that 
\begin{equation}\label{curvcom2}
 \mathbb B^{(\lambda)}_{\boldsymbol s} (\boldsymbol{z}) = D \boldsymbol s(\boldsymbol{z})^t \mathbb B^{(\lambda)}(\boldsymbol s(\boldsymbol{z})) \overbar{D \boldsymbol s(\boldsymbol{z})},\,\,\boldsymbol{z} \in \mathbb D^2.   \end{equation}
Let $\boldsymbol{z} = (z_1, z_2)$ be an arbitrary element of $\mathbb D^2$. Note that 
$$D\boldsymbol s(\boldsymbol{z}) = \begin{pmatrix}
1 & 1\\
z_2 & z_1
\end{pmatrix}.$$
If $z_1 \neq z_2$, then $D\boldsymbol s(z)$ is invertible. Thus, if $0 \neq z \in \mathbb D$, substituting the values of $D\boldsymbol s(z, 0)$ and $\mathbb B^{(\lambda)}_{\boldsymbol s}$ in Equation \eqref{curvcom2}, we obtain $\mathbb B^{(\lambda)} (z, 0)$ which, in particular, gives us the expressions of $\mathbb B^{(\lambda)}_{i \bar j}(z, 0)$, $1 \leq i, j \leq 2$, as stated in the statement of this proposition. Since $\mathbb B^{(\lambda)}$ is a continuous function on $\mathbb G_2$, it follows that the expressions of $\mathbb B^{(\lambda)}_{i \bar j}(z, 0)$, $1 \leq i, j \leq 2$, as stated in the statement of this proposition, holds for every $z \in \mathbb D$.
\end{proof}
 We briefly explain the idea of the proof of the theorem below. Two other theorems, namely, Theorem \ref{thminequiv} and Theorem \ref{inequivofdetker} are proved based on the same idea. As we have pointed out earlier two analytic Hilbert modules $(\mathcal {H} , K)$ and $(\mathcal {H}^\prime, K^\prime)$ in $\operatorname{HM}_a(\Omega)$ are equivalent if and only if there is a change of scale $\Psi$, that is, if and only if there is a nowhere vanishing holomorphic function $\Psi$ on $\Omega$ such that $K^\prime=\Psi K \overbar{\Psi}$. Thus, if $\mathcal{Z} \subset \Omega$ and $\mathcal{Z}$ is open in  $\mathbb{C}^k$ for some $k,\, k \leq n$, then $\Psi_{|\operatorname{res}\,\mathcal{Z}}$ is holomorphic. 
The restriction of $K$ and $K^\prime$ are holomorphic in the first variable and anti-holomorphic in the second on $\mathcal{Z}$.
Therefore, if $K^\prime$ is obtained from $K$ by a change of scale then restriction of $K^\prime$ to $\mathcal{Z}\times \mathcal{Z}$ is also obtained from $K$ restricted to $\mathcal{Z}\times \mathcal{Z}$ by a change of scale, namely, the restriction of $\Psi$ to $\mathcal{Z}$.  
If $(\mathcal{H}, K)$ is in $\operatorname{HM}_a(\Omega)$, then $(\mathcal{H}_{|\operatorname{res}\,\mathcal{Z}}, K_{| \operatorname{res}\,{\mathcal{Z \times\mathcal}Z}})$ is also in $\operatorname{HM}_a(\mathcal{Z})$, where 
$\mathcal{H}_{|\operatorname{res} \mathcal{Z}} = \{g\mid g= f_{|\operatorname{res} \, \mathcal{Z}}, f \in \mathcal{H}\}$. If we let $\mathcal{H}_0\subset \mathcal{H}$ be the submodule of functions vanishing on $\mathcal{Z}$, then $\mathcal{H}_{|\operatorname{res} \mathcal{Z}}$ is the quotient module $\mathcal{Q}:= \mathcal{H}\ominus \mathcal{H}_0$. Therefore, in this situation, if two analytic Hilbert modules $(\mathcal{H}, K)$ and $(\mathcal{H}^\prime, K^\prime)$ are equivalent, then so are the quotient modules $\mathcal{Q}= \mathcal{H} \ominus \mathcal{H}_0$ and $\mathcal{Q}^\prime=\mathcal{H} \ominus \mathcal{H}_0$. 
This is what we exploit in the proofs below where $\Omega=\mathbb{D}^2$ and $\mathcal{Z} \subset \mathbb{D}^2$ is the open set $\{(z,z)\mid z\in \mathbb{D}\}$ in $\mathbb{C}$.

\begin{theorem}\label{inequivofweightedbergker}
The \rm{Aut}$(\mathbb G_2)$-homogeneous analytic Hilbert modules $\mathbb A^{(\lambda_1, \nu_1)}$ and $\mathbb A^{(\lambda_2, \nu_2)}$ are unitarily equivalent if and only if $\lambda_1 = \lambda_2$ and $\nu_1 = \nu_2$.   
\end{theorem}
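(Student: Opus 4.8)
The plan is to use the curvature matrix as a complete unitary invariant and reduce the whole comparison to the fundamental set, where Proposition~\ref{curv} already records explicit formulas. Recall that $\mathbb{A}^{(\lambda,\nu)}$ has reproducing kernel $(B^{(\lambda)})^\nu$, which is non-vanishing on $\mathbb{G}_2$ by Proposition~\ref{nonvanishing}, so $\log (B^{(\lambda)})^\nu = \nu \log B^{(\lambda)}$ is well defined and the associated curvature matrix is $\nu\,\mathbb{B}^{(\lambda)}$. By Theorem~\ref{module equiv}, $\mathbb{A}^{(\lambda_1,\nu_1)}$ and $\mathbb{A}^{(\lambda_2,\nu_2)}$ are unitarily equivalent if and only if their curvature matrices coincide at every point of $\mathbb{G}_2$; thus the substantive (``only if'') direction amounts to showing that the identity $\nu_1\,\mathbb{B}^{(\lambda_1)}(w)=\nu_2\,\mathbb{B}^{(\lambda_2)}(w)$ for all $w\in\mathbb{G}_2$ forces $\lambda_1=\lambda_2$ and $\nu_1=\nu_2$. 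The converse is immediate, since equal parameters give identical kernels.

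For the forward direction I would evaluate the (necessarily equal) curvature matrices on the fundamental set $\Lambda=\{(z,0):0\le z<1\}$, so that $\nu_1\,\mathbb{B}^{(\lambda_1)}(z,0)=\nu_2\,\mathbb{B}^{(\lambda_2)}(z,0)$ for all $z\in\mathbb{D}$. (Alternatively, one may invoke Theorem~\ref{thm:2.2} and the remark following it to say directly that, for homogeneous modules, the curvature on $\Lambda$ determines it on all of $\mathbb{G}_2$; the transformation rule \eqref{eqn:2.4} involves only $Dg$ and is the same for both modules, so equality on $\Lambda$ is equivalent to global equality.) I would then focus on a single entry, say the $(1,\bar1)$-entry $\mathbb{B}^{(\lambda)}_{1\bar1}(z,0)=b^{(\lambda)}_{1\bar1}(z,0)$ from Proposition~\ref{curv}, which depends on $z$ only through $t=|z|^2$.

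The key point is that this one scalar real-analytic function of $t$ already encodes both parameters. A routine power-series expansion about $t=0$ of the explicit expression in Proposition~\ref{curv} gives
\[
\mathbb{B}^{(\lambda)}_{1\bar1}(z,0)=\frac{\lambda+1}{2}\left(1+\frac{\lambda+5}{3}\,t+O(t^2)\right),\qquad t=|z|^2 .
\]
Matching the constant and linear coefficients of $\nu_1\,\mathbb{B}^{(\lambda_1)}_{1\bar1}$ and $\nu_2\,\mathbb{B}^{(\lambda_2)}_{1\bar1}$ yields $\nu_1(\lambda_1+1)=\nu_2(\lambda_2+1)$ together with $\nu_1(\lambda_1+1)(\lambda_1+5)=\nu_2(\lambda_2+1)(\lambda_2+5)$; dividing the second relation by the first isolates the strictly monotone quantity $\tfrac{\lambda+5}{3}$, forcing $\lambda_1=\lambda_2$, after which $\nu_1=\nu_2$ follows (using $\nu_j>0$). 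This is also the place to explain why a coarser restriction is not enough: passing to the quotient module on the diagonal $\mathcal{Z}=\{(z,z)\}\subset\mathbb{D}^2$, as in the discussion preceding the theorem, only produces the single scalar kernel $(\lambda/2)^\nu(1-z\bar w)^{-\nu(2\lambda+2)}$ on $\mathbb{D}$, whose equivalence class records nothing beyond the exponent $\nu(\lambda+1)$; the transverse information carried by the full $2\times2$ curvature matrix is exactly what supplies the second, independent relation.

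The main obstacle I anticipate is purely computational: extracting two genuinely independent invariants from the unwieldy closed forms of Proposition~\ref{curv} and confirming that the ratio of the two chosen Taylor coefficients is an injective function of $\lambda$ (so that the two equations decouple cleanly). No deeper difficulty is expected, since homogeneity and the completeness of the curvature invariant do all the structural work, reducing the argument to comparing the first two jets of a single curvature entry on the fundamental set.
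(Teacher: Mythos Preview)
Your argument is correct, and it takes a genuinely different route from the paper's. Both proofs start from the same complete invariant (equality of curvature matrices, equivalently the change-of-scale relation for the kernels), and both obtain the first relation $\nu_1(\lambda_1+1)=\nu_2(\lambda_2+1)$; but the second, independent relation is extracted very differently. The paper restricts the change-of-scale identity first to the diagonal $\{(z,z)\}$ (yielding the first relation), and then to $(x,0)$ where it applies $\partial_2\bar\partial_2\log$ to get $\nu_1\,b^{(\lambda_1)}_{2\bar 2}(x,0)=\nu_2\,b^{(\lambda_2)}_{2\bar 2}(x,0)$ and passes to the \emph{boundary} limit $x\to 1$, using Proposition~\ref{curv} to see $b^{(\lambda)}_{2\bar 2}(x,0)\to\lambda$, hence $\nu_1\lambda_1=\nu_2\lambda_2$. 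You stay entirely at the \emph{interior} point $z=0$: a two-term Taylor expansion of the single entry $b^{(\lambda)}_{1\bar 1}$ gives both relations at once, and your injectivity step is trivial because the coefficient ratio $(\lambda+5)/3$ is visibly monotone in $\lambda$. Your approach is arguably more self-contained and avoids any limiting argument; on the other hand, the paper's boundary-limit technique is the same device it reuses verbatim in the harder Theorems~\ref{inequivofdetker} and~\ref{thminequiv}, so establishing it here sets up a template that pays off later.
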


\begin{proof}
Assume that the Hilbert modules $\mathbb A^{(\lambda_1, \nu_1)}$ and $\mathbb A^{(\lambda_2, \nu_2)}$ are unitarily equivalent. Note that both $\mathbb A^{(\lambda_1, \nu_1)}$ and $\mathbb A^{(\lambda_2, \nu_2)}$ are analytic Hilbert modules. Therefore, there exists a non-vanishing holomorphic functions $F : \mathbb G_2 \to \mathbb C$ such that
\begin{equation}\label{sebsec2:eqn1}
\left(B^{(\lambda_1)}(\boldsymbol{s}( \boldsymbol{z}), \boldsymbol{s}(\boldsymbol{w}) )\right)^{\nu_1} = F(\boldsymbol{s}(\boldsymbol{z}))  \left(B^{(\lambda_2)}(\boldsymbol{s}( \boldsymbol{z}), \boldsymbol{s}(\boldsymbol{w}) )\right)^{\nu_2} \overbar{F(\boldsymbol{s}(\boldsymbol{w}))},\,\,\boldsymbol{z}, \boldsymbol{w} \in \mathbb D^2.  
\end{equation}
Substituting $ \boldsymbol{ z } $ and $ \boldsymbol{ w } $ by $ ( z, z ) $, $z \in \mathbb D$, in the equation above, Lemma \ref{weightedbergkerexpansion} yields that  
\begin{align*}
\left( \frac{ \lambda_1 }{ 2 } \times \frac{ 1 }{ ( 1 - | z |^2 )^{ 2 \lambda_1 + 2 } } \right)^{ \nu_1 } 
= | F ( s ( z, z ) ) |^2 \left( \frac{ \lambda_2 }{ 2 } \times \frac{ 1 }{ ( 1 - | z |^2 )^{ 2 \lambda_2 + 2 } } \right)^{ \nu_2 } .
\end{align*}
Taking logarithm and then applying $ \partial \overline{ \partial } $ to both sides of the equation above, we get
\begin{equation} \label{kerequiv3webergker}
      \nu_1 ( \lambda_1 + 1 ) = \nu_2(\lambda_2 + 1).
\end{equation}
Letting $F_{\boldsymbol{s}} = F \circ \boldsymbol{s}$ observe that Equation \eqref{sebsec2:eqn1} can be paraphrased as
\begin{equation*}
\left(B_{\boldsymbol{s}}^{(\lambda_1)}(\boldsymbol{z}, \boldsymbol{w} )\right)^{\nu_1} = F_{\boldsymbol{s}}(\boldsymbol{z})  \left(B_{\boldsymbol{s}}^{(\lambda_2)}(\boldsymbol{z}, \boldsymbol{w})\right)^{\nu_2} \overbar{F_{\boldsymbol{s}}(\boldsymbol{w})},\,\,\boldsymbol{z}, \boldsymbol{w} \in \mathbb D^2.  
\end{equation*}
Now substituting $\boldsymbol{z} = \boldsymbol{w}$ and applying the operator $\partial_2 \overbar{\partial_2} \log$ to both sides of this equation evaluated at $(z, 0) \in \mathbb D \times \{ 0 \}$, we obtain
\begin{equation} \label{eqn:4.12}
\nu_1 b_{2 \bar 2}^{(\lambda_1)}(z, 0) = \nu_2 b_{2 \bar 2}^{(\lambda_2)}(z, 0).
\end{equation}
Replacing the values of $b_{2 \bar 2}^{(\lambda_1)}(z, 0)$ and $b_{2 \bar 2}^{(\lambda_2)}(z, 0)$ from Proposition \ref{curv} and finally, considering the limit $z \to 1$ to both sides of \eqref{eqn:4.12}, we obtain 
\begin{equation} \label{kerequiv3webergker1}
      \nu_1  \lambda_1  = \nu_2\lambda_2.
\end{equation}
The equations \eqref{kerequiv3webergker} and \eqref{kerequiv3webergker1} together imply that $\lambda_1 = \lambda_2$ and $\nu_1 = \nu_2$.
\end{proof}

From Theorem \ref{detkernelofberg}, we see that $\{\mathbb H^{(\lambda, \nu)} : \lambda > 0\,\,\mbox{and}\,\,\nu \in \mathcal{W}_{\mathbb G_2, a}(B^{(\lambda)}) \cup \{ 0 \} \}$ is a second family of Aut$(\mathbb G_2)$-homogeneous analytic Hilbert modules. We now show that the Hilbert modules $\mathbb H^{(\lambda, \nu)}$ and $\mathbb A^{(\mu, \eta)}$ are unitarily inequivalent for any $\lambda, \mu > 0$ and $\nu \in \mathcal{W}_{\mathbb G_2, a}(B^{(\lambda)}) \cup \{ 0 \} $, $\eta \in \mathcal{W}_{\mathbb G_2, a}(B^{(\mu)})$.  To this end, we prove a couple of lemmas leading to computation of $ \det \mathbb B^{ ( \lambda, \nu ) }$.

\begin{lemma} \label{det-curv}
The determinant $ \det \mathbb{B}^{( \lambda ) } $ of the curvature matrix $ \mathbb{ B }^{ ( \lambda ) } $ is given by the formula 
\begin{align*}
\det \mathbb{ B }^{(\lambda)}(\boldsymbol{s} (\boldsymbol{z}), \boldsymbol{s} (\boldsymbol{z})) =& \frac{ \lambda^2 B^{(\lambda+2)}(\boldsymbol{s}(\boldsymbol{z}), \boldsymbol{s}(\boldsymbol{z}))}{2|1 - \bar z_1z_2|^{2\lambda + 2}(1 - |z_1|^2)^{\lambda+1}(1 - |z_2|^2)^{\lambda + 1} \left(B^{(\lambda)}(\boldsymbol{s}(\boldsymbol{z}), \boldsymbol{s}(\boldsymbol{z}))\right)^3} \times \\
& \frac{B^{(\lambda)}(\boldsymbol{s}(\boldsymbol{z}), \boldsymbol{s}(\boldsymbol{z})) \left(|1 - \bar z_1z_2|^{2\lambda + 2} + (1 - |z_1|^2)^{\lambda+1} (1 - |z_2|^2)^{\lambda+1}\right) - \lambda}{|z_1 - z_2|^4}, 
\end{align*}
for $\boldsymbol{z} = (z_1, z_2) \in \mathbb D^2$.
\end{lemma}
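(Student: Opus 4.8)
The plan is to push the whole computation from the symmetrized bidisc $\mathbb{G}_2$ down to the bidisc $\mathbb{D}^2$, where the pulled-back kernel $B^{(\lambda)}_{\boldsymbol{s}}$ is completely explicit, and then to express the determinant of the $2\times2$ curvature matrix as a single $3\times3$ bordered determinant that can be evaluated in closed form. The first step is to take determinants in the chain-rule identity \eqref{curvcom2}, namely $\mathbb{B}^{(\lambda)}_{\boldsymbol{s}}(\boldsymbol{z}) = D\boldsymbol{s}(\boldsymbol{z})^t\,\mathbb{B}^{(\lambda)}(\boldsymbol{s}(\boldsymbol{z}))\,\overline{D\boldsymbol{s}(\boldsymbol{z})}$. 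Since $\det D\boldsymbol{s}(\boldsymbol{z}) = z_1 - z_2$, this yields
\[
\det\mathbb{B}^{(\lambda)}(\boldsymbol{s}(\boldsymbol{z})) = \frac{1}{|z_1-z_2|^{2}}\,\det\mathbb{B}^{(\lambda)}_{\boldsymbol{s}}(\boldsymbol{z}),
\]
valid for $z_1\neq z_2$ and extended to the diagonal by continuity, exactly as in the proof of Proposition \ref{curv}. This already produces one of the two factors of $|z_1-z_2|^2$ in the denominator of the claimed formula.

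Next I would record the diagonal restriction. Setting $\boldsymbol{w}=\boldsymbol{z}$ in \eqref{defofwbergker} and writing $\Psi(\boldsymbol{z}):=B^{(\lambda)}_{\boldsymbol{s}}(\boldsymbol{z},\boldsymbol{z})$, one gets the explicit symmetric function
\[
\Psi(\boldsymbol{z}) = \frac{1}{2|z_1-z_2|^2}\left[\frac{1}{\big((1-|z_1|^2)(1-|z_2|^2)\big)^{\lambda}} - \frac{1}{|1-z_1\bar z_2|^{2\lambda}}\right].
\]
Because $B^{(\lambda)}_{\boldsymbol{s}}$ is holomorphic in its first slot and anti-holomorphic in its second, the diagonal values of its mixed derivatives coincide with the derivatives of the single-variable function $\Psi$: one checks that $(\partial_i B^{(\lambda)}_{\boldsymbol{s}})(\boldsymbol{z},\boldsymbol{z}) = \partial_{z_i}\Psi$, $(\bar\partial_j B^{(\lambda)}_{\boldsymbol{s}})(\boldsymbol{z},\boldsymbol{z}) = \bar\partial_{z_j}\Psi$, and $(\partial_i\bar\partial_j B^{(\lambda)}_{\boldsymbol{s}})(\boldsymbol{z},\boldsymbol{z}) = \partial_{z_i}\bar\partial_{z_j}\Psi$. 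I would then invoke the standard Schur-complement (bordered determinant) identity $K^{3}\det\big(\!\!\big(\partial_i\bar\partial_j\log K\big)\!\!\big) = \det M$, where
\[
M = \begin{pmatrix} K & \bar\partial_1 K & \bar\partial_2 K \\ \partial_1 K & \partial_1\bar\partial_1 K & \partial_1\bar\partial_2 K \\ \partial_2 K & \partial_2\bar\partial_1 K & \partial_2\bar\partial_2 K \end{pmatrix}.
\]
Applied to $K = B^{(\lambda)}_{\boldsymbol{s}}$ on the diagonal, every entry of $M$ becomes a derivative of the explicit $\Psi$, so that $\det\mathbb{B}^{(\lambda)}_{\boldsymbol{s}}(\boldsymbol{z}) = \Psi^{-3}\det M(\boldsymbol{z})$.

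The remaining work is purely algebraic and is where I expect the main obstacle to lie: compute $\partial_{z_i}\Psi$, $\bar\partial_{z_j}\Psi$, $\partial_{z_i}\bar\partial_{z_j}\Psi$ by differentiating the three building blocks $a=(1-|z_1|^2)(1-|z_2|^2)$, $b=|1-z_1\bar z_2|^2$, $c=|z_1-z_2|^2$, substitute into $M$, and collapse the $3\times3$ determinant. The target is to show that
\[
\det M(\boldsymbol{z}) = \frac{\lambda^2\,B^{(\lambda+2)}(\boldsymbol{s}(\boldsymbol{z}),\boldsymbol{s}(\boldsymbol{z}))}{2\,a^{\lambda+1}b^{\lambda+1}}\cdot\frac{B^{(\lambda)}(\boldsymbol{s}(\boldsymbol{z}),\boldsymbol{s}(\boldsymbol{z}))\big(a^{\lambda+1}+b^{\lambda+1}\big)-\lambda}{c},
\]
where the factor $B^{(\lambda+2)}$ is identified through $B^{(\lambda+2)}_{\boldsymbol{s}}(\boldsymbol{z},\boldsymbol{z}) = \tfrac{1}{2c}\big(a^{-(\lambda+2)}-b^{-(\lambda+2)}\big)$. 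Dividing by $\Psi^{3} = (B^{(\lambda)})^{3}$ and by the extra factor $c=|z_1-z_2|^2$ coming from the chain-rule step then produces exactly the claimed expression, the two factors of $|z_1-z_2|^2$ combining into $|z_1-z_2|^4$. As an independent consistency check I would specialize to the fundamental set $z_2=0$, where $a=1-|z_1|^2$, $b=1$, $c=|z_1|^2$, and compare with the explicit entries $b^{(\lambda)}_{i\bar j}(z,0)$ and $\mathbb{B}^{(\lambda)}_{i\bar j}(z,0)$ recorded in Proposition \ref{curv}; the quasi-invariance furnished by Corollary \ref{det transformation formula} offers a further route to verify that the resulting formula transforms correctly off the fundamental set.
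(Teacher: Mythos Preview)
Your plan is sound and would give a correct proof, but it follows a genuinely different route from the paper. The paper does \emph{not} attempt the direct evaluation of the bordered $3\times3$ determinant for general $\boldsymbol z\in\mathbb D^2$. Instead, it first restricts to the fundamental set: using the explicit entries $b^{(\lambda)}_{i\bar j}(z,0)$ from Proposition~\ref{curv}, it computes $\det\mathbb B^{(\lambda)}_{\boldsymbol s}(z,0)=b^{(\lambda)}_{1\bar1}b^{(\lambda)}_{2\bar2}-b^{(\lambda)}_{1\bar2}b^{(\lambda)}_{2\bar1}$ at $(z,0)$, divides by $|z|^2$ to get $\det\mathbb B^{(\lambda)}((z,0),(z,0))$, and then substitutes $z=\varphi_{z_1}(z_2)$ and invokes the $\operatorname{Aut}(\mathbb G_2)$-transformation rule (Equation~\eqref{eqn:3.6.2}, together with $\det D\widetilde{\varphi_{z_1}}(\varphi_{z_1}(z_2),0)=-(1-\bar z_1 z_2)^3$) to transport the formula to an arbitrary point $\boldsymbol s(\boldsymbol z)$. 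In other words, the paper exploits homogeneity to reduce the algebra to a one-variable computation, while you propose to do the full two-variable computation directly. Your approach is self-contained and does not rely on the homogeneity machinery, which is a conceptual gain; the price is that the ``purely algebraic'' step you flag---collapsing the $3\times3$ determinant built from $\Psi(\boldsymbol z)=\tfrac{1}{2|z_1-z_2|^2}\big(a^{-\lambda}-b^{-\lambda}\big)$ and its derivatives---is substantially heavier than the paper's fundamental-set computation, and you would still need to carry it out in full (your consistency checks at $z_2=0$ and via Corollary~\ref{det transformation formula} are good hygiene but do not by themselves complete the argument).
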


\begin{proof}
It follows from Proposition \ref{curv} that
\begin{align*} 
& b^{( \lambda ) }_{ 1 \bar{ 1 } } ( z, 0 ) b^{( \lambda ) }_{ 2 \bar{ 2 } } ( z, 0 ) - b^{( \lambda ) }_{ 1 \bar{ 2 } } ( z, 0 ) b^{( \lambda ) }_{ 2 \bar{ 1 } } ( z, 0 ) =\\
&\phantom{gadadharmisrapra}\frac{ \lambda^2 }{ 2 | z |^2 B^{ ( \lambda ) }_0 ( z )^2 ( 1 - | z |^2 )^{ 1 + \lambda } } \left[ ( 1 - | z |^2 )^{ \lambda + 1 } B_0^{ ( \lambda ) } ( z ) \frac{ B_0^{ ( 2 \lambda + 2 ) } ( z ) }{ B_0^{ ( \lambda + 1 ) } ( z ) } - \lambda \right], ~~~ z \in \mathbb{D},  
\end{align*}
where we set, for $ z \in \mathbb{ D } $,
$$ B_0^{ ( \lambda ) } ( z ) : = B^{ ( \lambda ) } ( ( z, 0), ( z, 0 ) ) = \frac{ 1 }{ 2 | z |^2 } \frac{ \left( 1 - ( 1 - | z |^2 )^{ \lambda } \right) }{ ( 1 - | z |^2 )^{ \lambda } } .$$ 
Consequently, the determinant of the curvature matrix $ \mathbb{ B }^{ ( \lambda ) } $ at $ ( z, 0 ) \in \mathbb{ G }_2 $ is given by the formula \begin{equation} \label{eqn:3.6.1}
\det \mathbb{ B }^{ ( \lambda ) } ( (z, 0), (z, 0) ) = \frac{ \lambda^2 }{ 2 | z |^4 B^{ ( \lambda ) }_0 ( z )^2 ( 1 - | z |^2 )^{ \lambda + 1 } } \left[ ( 1 - | z |^2 )^{ \lambda + 1 } B_0^{ ( \lambda ) } ( z ) \frac{ B_0^{ ( 2 \lambda + 2 ) } ( z ) }{ B_0^{ ( \lambda + 1 ) } ( z ) } - \lambda \right] . 
\end{equation}
For $z_1, z_2\in \mathbb D$,  replacing $z$ by $\varphi_{z_1}(z_2)$ in Equation \eqref{eqn:3.6.1}, we have
\begin{align} \label{eqn:3.6.1.1}
\nonumber &\det \mathbb{ B }^{ ( \lambda ) } (( \varphi_{z_1}(z_2), 0 ), ( \varphi_{z_1}(z_2), 0 ))\\
\nonumber &\phantom{gadadhar} = \frac{\lambda^2 B^{(\lambda + 1)}(\boldsymbol{s}(\boldsymbol{z}), \boldsymbol{s}(\boldsymbol{z}))}{2 \left(B^{(\lambda)}(\boldsymbol{s}(\boldsymbol{z}), \boldsymbol{s}(\boldsymbol{z}))\right)^3 (1 - |z_1|^2)^{\lambda+1} (1 - |z_2|^2)^{\lambda+1} |1 - \bar z_1 z_2|^{2\lambda - 4}} \times\\
&\phantom{gadadhar} \frac{\left[ B^{(\lambda)}(\boldsymbol{s}(\boldsymbol{z}), \boldsymbol{s}(\boldsymbol{z})) \left(|1 - \bar z_1 z_2|^{2\lambda + 2} + (1 - |z_1|^2)^{\lambda+1} (1 - |z_2|^2)^{\lambda + 1}\right) - \lambda \right]}{|z_1 - z_2|^4}, 
\end{align}
where $ \varphi_{ z_1 } ( z_2 ) = \frac{ z_2 - z_1 }{ 1 - \bar{ z_1 } z_2 } $.

Since $\mathbb A^{(\lambda)}(\mathbb G_2)$ is an Aut$(\mathbb G_2)$-homogeneous analytic Hilbert module, it follows from Equation \eqref{eq:1} that the curvature matrix $ \mathbb{ B }^{ ( \lambda ) } $ of the correspoding Hermitian holomorphic line bundle $\mathsf L_{\mathbb A^{(\lambda)}(\mathbb G_2)}$ satisfies the following identity
\begin{align} \label{eqn:3.6.2}
\nonumber &\mathbb{ B }^{ ( \lambda ) } ( \boldsymbol{ s } ( \boldsymbol{ z }), \boldsymbol{ s } ( \boldsymbol{ z }))\\
&\phantom{xxx} = \big ( D ( \widetilde{ \varphi_{ z_1 } } ) ( \varphi_{ z_1 } ( z_2 ), 0 )^{\operatorname{tr}} ) \big )^{ - 1 } \mathbb{ B }^{ ( \lambda ) }  (( \varphi_{ z_1 } ( z_2 ), 0 ), ( \varphi_{ z_1 } ( z_2 ), 0 ))  \overline{ \big ( D ( \widetilde{ \varphi_{ z_1 } } ) ( \varphi_{ z_1 } ( z_2 ), 0 ) \big ) }^{ - 1 }.
\end{align}
Observe that $\det D \widetilde{\varphi_{z_1}} \left(\varphi_{z_1}(z_2), 0\right) = -(1 - \bar z_1 z_2)^3$. Consequently, taking the determinant in both sides of Equation \eqref{eqn:3.6.2} and substituting $ \det \mathbb{ B }^{ ( \lambda ) } (( \varphi_{z_1}(z_2), 0 ), ( \varphi_{z_1}(z_2), 0 ))$ from Equation \eqref{eqn:3.6.1.1}, we obtain the desired identity.
\end{proof}

Recall that $\mathbb B^{(\lambda, \nu)} = (B^{(\lambda)})^{\nu+2} \mathbb B^{( \lambda) } $ is the reproducing kernel on $ \mathbb{ G }_2 $ and $ \mathbb{ B }^{(\lambda, \nu)}_{ \boldsymbol{ s } } $ is the corresponding pulled-back kernel on $ \mathbb{ D }^2 $ via the symmetrization map $ \boldsymbol{ s } : \mathbb{ D }^2 \rightarrow \mathbb{ G }_2 $. Therefore, the polarization of
\begin{eqnarray*} 
\det \mathbb B_{ \boldsymbol{ s } }^{ ( \lambda, \nu ) } ( \boldsymbol{ z }, \boldsymbol{z} ) &  = & \det \left[ (B^{(\lambda)} ( \boldsymbol{ s } ( \boldsymbol{ z } ), \boldsymbol{ s } ( \boldsymbol{ z } ) )^{\nu+2} \mathbb B^{( \lambda) } ( \boldsymbol{ s } ( \boldsymbol{ z } ) ) \right] \\
& = & \left( B^{( \lambda ) } ( \boldsymbol{s} ( \boldsymbol{ z } ), \boldsymbol{s} ( \boldsymbol{ z } ) ) \right)^{ 2 (\nu+2) } \det \mathbb B^{ ( \lambda ) } ( \boldsymbol{s} ( \boldsymbol{ z } ) ) \\
& = & \frac{ \lambda^2 \left( B^{ ( \lambda ) } ( \boldsymbol{s} (\boldsymbol{ z } ), s ( \boldsymbol{ z } ) ) \right)^{ 2 \nu+ 1 } B^{ ( \lambda + 2 ) } ( \boldsymbol{s} (\boldsymbol{ z } ), \boldsymbol{s} ( \boldsymbol{ z } ) ) }{ 2|1 - \bar z_1z_2|^{2\lambda + 2}(1 - |z_1|^2)^{\lambda+1}(1 - |z_2|^2)^{\lambda + 1 }} \times H^{ ( \lambda ) } ( \boldsymbol{s}(\boldsymbol{ z }), \boldsymbol{s}(\boldsymbol{z}) ) 
\end{eqnarray*}
is a non-negative definite kernel on $\mathbb D^2$, where 
\begin{equation} \label{H(z)}
H^{(\lambda)} ( \boldsymbol{s}(\boldsymbol{ z }), \boldsymbol{s}(\boldsymbol{z}) ) : = \frac{B^{(\lambda)}(\boldsymbol{s}(\boldsymbol{z}), \boldsymbol{s}(\boldsymbol{z})) \left(|1 - \bar z_1z_2|^{2\lambda + 2} + (1 - |z_1|^2)^{\lambda+1} (1 - |z_2|^2)^{\lambda+1}\right) - \lambda}{|z_1 - z_2|^4}. 
\end{equation}
In a similar way as before, define $H^{(\lambda)}_{ \boldsymbol{ s } }(\boldsymbol{z}, \boldsymbol{z} ) = H^{(\lambda)}( \boldsymbol{ s }(\boldsymbol{z}), \boldsymbol{ s }(\boldsymbol{ z }) )$, for  $\boldsymbol{z} \in \mathbb D^2$. 

\begin{lemma} \label{lemcurv2}
There exists an open subset $U$ of $\mathbb D^2$ containing $\Delta$ such that the function $ H^{(\lambda)}_{ \boldsymbol{ s } } $ defined
in Equation \eqref{H(z)} is of the form 
    \begin{equation} \label{H(z)1}
        H^{(\lambda)}_{  \boldsymbol{ s } } ( \boldsymbol{ z }, \boldsymbol{z} ) = \frac{ \lambda ( \lambda + 1 ) ( 2 \lambda + 1 ) }{ 12 } \frac{ ( 1 - | z_1 |^2 )^{ \lambda - 2 } ( 1 - | z_2 |^2 )^{ \lambda - 2 } }{ | 1 - \bar{ z }_1 z_2 |^{ 2 \lambda } } + \sum_{ p \geq 3 } h_p ( z_1, z_2 ) | z_1 - z_2 |^{ 2p - 4 },
    \end{equation}
where 
\begin{eqnarray*} 
h_p ( z_1, z_2 ) & = & \left[ - \lambda { \binom{\lambda}{p} } + \frac{ \lambda }{ 2 } { \binom{\lambda + 1}{p} } + { \binom{\lambda}{p + 1} } + \frac{1}{2}\sum_{ \substack{ m + n = p \\ m, n \geq 1 }  } { \binom{\lambda}{m + 1} } { \binom{\lambda + 1}{n} } \right] \times\\
& & \phantom{space}\frac{ ( 1 - | z_1 |^2 )^{ \lambda - p } ( 1 - | z_2 |^2 )^{ \lambda - p } }{ | 1 - \bar{ z }_1 z_2 |^{ 2 \lambda } } , 
\end{eqnarray*}
and $ \boldsymbol{ z } = ( z_1, z_2 ) \in U $. In particular, for $ z \in \mathbb{ D } $, $ \lambda > 0 $ and $ \nu \geq 0 $,       
    \begin{equation} \label{resdetcurv}
      \det \mathbb B_s^{ ( \lambda, \nu ) } ( (z, z), (z,z) ) = \left( \frac{ \lambda }{ 2 } \right)^{ 2 (\nu+2) } \times \frac{ ( \lambda + 1 ) ( \lambda + 2 ) ( 2 \lambda + 1 ) }{ 6 } \times \frac{ 1 }{ ( 1 - | z |^2 )^{ 4 [ (\nu+2) ( \lambda + 1 ) + 2 ] } }.
    \end{equation} 
\end{lemma}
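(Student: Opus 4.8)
The plan is to reduce the entire statement to a single power-series identity in one real variable. The starting point is the observation that the three quantities entering $H^{(\lambda)}$ are not independent: writing $P = (1-|z_1|^2)(1-|z_2|^2)$, $Q = |1-\bar z_1 z_2|^2$ and $t = |z_1-z_2|^2$, a direct expansion gives the identity $Q = P + t$. Consequently both the factor $B^{(\lambda)}(\boldsymbol s(\boldsymbol z),\boldsymbol s(\boldsymbol z))$ --- via its diagonal series in Lemma \ref{weightedbergkerexpansion} --- and all the remaining factors $|1-\bar z_1 z_2|^{2\lambda+2} = Q^{\lambda+1}$, $(1-|z_1|^2)^{\lambda+1}(1-|z_2|^2)^{\lambda+1} = P^{\lambda+1}$ and $|z_1-z_2|^4 = t^2$ depend on $(z_1,z_2)$ only through $P$ and $t$. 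Thus $H^{(\lambda)}_{\boldsymbol s}$ is a function of $P$ and $t$ alone, and I would introduce the single variable $u = t/P$ (so $Q = P(1+u)$) and expand everything as a series in $u$, valid on the neighbourhood $U$ of $\Delta$ where $t < P$, that is, $|u| < 1$.

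First I would substitute the diagonal expansion from Lemma \ref{weightedbergkerexpansion}, namely $B^{(\lambda)}(\boldsymbol s(\boldsymbol z),\boldsymbol s(\boldsymbol z)) = \tfrac{1}{2Q^\lambda}\sum_{n\ge0}\binom{\lambda}{n+1}\tfrac{t^n}{P^{n+1}}$, into the numerator $B^{(\lambda)}(Q^{\lambda+1}+P^{\lambda+1}) - \lambda$ of $H^{(\lambda)}$ in \eqref{H(z)}. After multiplying through by $Q^\lambda$ and using $\sum_{n\ge0}\binom{\lambda}{n+1}u^n = \tfrac{(1+u)^\lambda-1}{u}$, the numerator collapses to the closed form $P^\lambda F(u)$ with
\[F(u) = \frac{(1+u)^{2\lambda+1}-(1+u)^{\lambda+1}+(1+u)^\lambda-1}{2u} - \lambda(1+u)^\lambda.\]
Dividing by $t^2 = P^2u^2$ then shows that the claim \eqref{H(z)1}, after clearing the common factor $P^{\lambda-2}/Q^\lambda$, is equivalent to the single identity $F(u) = \sum_{p\ge2} c_p\, u^p$, where $c_p$ is the bracketed coefficient in the definition of $h_p$ and $c_2 = \tfrac{\lambda(\lambda+1)(2\lambda+1)}{12}$.

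The main point is then to verify this coefficient identity. I would first check that the $u^0$ and $u^1$ coefficients of $F$ vanish --- a short computation with $N(u) := (1+u)^{2\lambda+1}-(1+u)^{\lambda+1}+(1+u)^\lambda-1$ giving $N(0)=0$, $N'(0)=2\lambda$, $N''(0)=4\lambda^2$ --- which is exactly the divisibility of the numerator by $t^2$ that makes the singularity of $H^{(\lambda)}$ along $\Delta$ removable and produces the open set $U$. For $p\ge2$, reading off the coefficient of $u^p$ in $F$ gives $c_p = \tfrac12\big[\binom{2\lambda+1}{p+1}-\binom{\lambda+1}{p+1}+\binom{\lambda}{p+1}\big] - \lambda\binom{\lambda}{p}$, and the remaining task is to match this with the convolution expression defining $h_p$. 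The hard part will be recognizing that the convolution sum $\sum_{m+n=p}\binom{\lambda}{m+1}\binom{\lambda+1}{n}$ is precisely the interior part of the Cauchy product $\binom{2\lambda+1}{p+1} = \sum_{k=0}^{p+1}\binom{\lambda}{k}\binom{\lambda+1}{p+1-k}$ arising from $(1+u)^{2\lambda+1} = (1+u)^\lambda(1+u)^{\lambda+1}$; peeling off the boundary terms $k=0,1,p+1$ converts $\binom{2\lambda+1}{p+1}$ into exactly the four summands appearing in $h_p$, establishing the identity and hence \eqref{H(z)1}.

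Finally, for the restriction formula \eqref{resdetcurv} I would set $z_1=z_2=z$, so that $t=0$ kills every term of \eqref{H(z)1} with $p\ge3$ and leaves $H^{(\lambda)}_{\boldsymbol s}((z,z),(z,z)) = \tfrac{\lambda(\lambda+1)(2\lambda+1)}{12}(1-|z|^2)^{-4}$. Substituting this together with the diagonal values $B^{(\lambda)} = \tfrac{\lambda}{2}(1-|z|^2)^{-(2\lambda+2)}$ and $B^{(\lambda+2)} = \tfrac{\lambda+2}{2}(1-|z|^2)^{-(2\lambda+6)}$ from \eqref{resBerg} into the formula for $\det\mathbb B^{(\lambda,\nu)}_{\boldsymbol s}$ established just before the lemma, a bookkeeping of the powers of $(1-|z|^2)$ yields the exponent $-4[(\nu+2)(\lambda+1)+2]$ and the numerical constant $(\lambda/2)^{2(\nu+2)}(\lambda+1)(\lambda+2)(2\lambda+1)/6$, completing the proof.
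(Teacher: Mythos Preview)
Your proof is correct and takes a genuinely different, cleaner route than the paper. The paper substitutes the series of Lemma~\ref{weightedbergkerexpansion} straight into \eqref{H(z)} and manipulates term by term as a power series in $|z_1-z_2|^2$, so that the four summands in the bracket defining $h_p$ appear one by one from the four pieces of the expansion; the vanishing of the $|z_1-z_2|^2$ coefficient is left as ``a direct computation.'' You instead observe the algebraic identity $Q=P+t$ and reduce everything to the single variable $u=t/P$, obtaining the closed form $F(u)=\tfrac{N(u)}{2u}-\lambda(1+u)^\lambda$; this makes the removability of the singularity at $\Delta$ (i.e.\ $F(0)=F'(0)=0$) a two-line Taylor check, and yields the compact coefficient $c_p=\tfrac12\big[\binom{2\lambda+1}{p+1}-\binom{\lambda+1}{p+1}+\binom{\lambda}{p+1}\big]-\lambda\binom{\lambda}{p}$. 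The only extra step your approach requires is the Vandermonde convolution $(1+u)^{2\lambda+1}=(1+u)^\lambda(1+u)^{\lambda+1}$ to unpack $\binom{2\lambda+1}{p+1}$ and match the paper's four-term formula for $h_p$; conversely, the paper never needs Vandermonde because its four terms arise directly. Both approaches give the same open set $U=\{t<P\}$, and your verification of \eqref{resdetcurv} from \eqref{resBerg} and the formula for $\det\mathbb B^{(\lambda,\nu)}_{\boldsymbol s}$ preceding the lemma is the same as the paper's.
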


\begin{proof}
Substituting the expression of $B^{(\lambda)}_s(\boldsymbol{ z }, \boldsymbol{ z })$ from Lemma \ref{weightedbergkerexpansion} into Equation \eqref{H(z)}, we obtain  
\begin{flalign*}
H^{(\lambda)}_{  \boldsymbol{ s } }(\boldsymbol{z}, \boldsymbol{z}) &= \frac{1}{|z_1 - z_2|^4} \left[\frac{(1 - |z_1|^2)^{\lambda + 1} (1 - |z_2|^2)^{\lambda + 1}}{2 |1 - \bar z_1 z_2|^{2 \lambda}} \left(\sum_{ n \geq 0 } { \lambda \choose n + 1 } \frac{ | z_1 - z_2 |^{ 2n } }{ ( 1 - | z_1 |^2 )^{ n + 1 } ( 1 - | z_2 |^2 )^{ n + 1 } }\right)\right.\\
& \left. \left( \frac{|1 - \bar z_1 z_2|^{2\lambda + 2}}{(1 - |z_1|^2)^{\lambda + 1} (1 - |z_2|^2)^{\lambda + 1}} + 1\right) - \lambda \right]\\
&= \frac{1}{|z_1 - z_2|^4} \left[\frac{(1 - |z_1|^2)^{\lambda + 1} (1 - |z_2|^2)^{\lambda + 1}}{2 |1 - \bar z_1 z_2|^{2 \lambda}} \left(\sum_{ n \geq 0 } { \lambda \choose n + 1 } \frac{ | z_1 - z_2 |^{ 2n } }{ ( 1 - | z_1 |^2 )^{ n + 1 } ( 1 - | z_2 |^2 )^{ n + 1 } }\right)\right.\\
& \left. \left( 2 + \sum_{n \geq 1} { \lambda + 1 \choose n } \frac{ | z_1 - z_2 |^{ 2n } }{ ( 1 - | z_1 |^2 )^n ( 1 - | z_2 |^2 )^n } \right) - \lambda \right]\\
&= \frac{1}{|z_1 - z_2|^4} \left[-\frac{\lambda (1 - |z_1|^2)^\lambda (1 - |z_2|^2)^\lambda}{|1 - \bar z_1 z_2|^{2\lambda}} \left(\frac{|1 - \bar z_1 z_2|^{2\lambda}}{(1 - |z_1|^2)^\lambda (1 - |z_2|^2)^\lambda} - 1\right)  \right.\\
& \left. + \frac{\lambda (1 - |z_1|^2)^\lambda (1 - |z_2|^2)^\lambda}{2|1 - \bar z_1 z_2|^{2\lambda}} \sum_{n \geq 1} { \lambda + 1 \choose n } \frac{ | z_1 - z_2 |^{ 2n } }{ ( 1 - | z_1 |^2 )^n ( 1 - | z_2 |^2 )^n } ~  \right.\\
&\left.+ \frac{(1 - |z_1|^2)^{\lambda+1} (1 - |z_2|^2)^{\lambda+1}}{|1 - \bar z_1 z_2|^{2\lambda}} \sum_{n \geq 2} { \lambda \choose n } \frac{ | z_1 - z_2 |^{ 2n - 2 } }{ ( 1 - | z_1 |^2 )^n ( 1 - | z_2 |^2 )^n } ~  \right.\\
&\left. + \frac{ (1 - |z_1|^2)^{\lambda+1} (1 - |z_2|^2)^{\lambda+1}}{2|1 - \bar z_1 z_2|^{2\lambda}} \left(\sum_{ n \geq 2 } 
{ \lambda \choose n } 
\frac{ | z_1 - z_2 |^{ 2n -2 } }{ ( 1 - | z_1 |^2 )^{ n } ( 1 - | z_2 |^2 )^{ n } }\right)\right.\\
& \times \left.\left(\sum_{ m \geq 1 } { \lambda + 1 \choose m } \frac{ | z_1 - z_2 |^{ 2m } }{ ( 1 - | z_1 |^2 )^{ m } ( 1 - | z_2 |^2 )^{ m } }\right)\right]\\
\end{flalign*}

\begin{flalign*}
&= \frac{1}{|z_1 - z_2|^4} \left[-\frac{\lambda (1 - |z_1|^2)^\lambda (1 - |z_2|^2)^\lambda}{|1 - \bar z_1 z_2|^{2\lambda}} \sum_{n \geq 1} { \lambda \choose n } \frac{ | z_1 - z_2 |^{ 2n } }{ ( 1 - | z_1 |^2 )^n ( 1 - | z_2 |^2 )^n }  \right.\\
&\left. + \frac{\lambda (1 - |z_1|^2)^\lambda (1 - |z_2|^2)^\lambda}{2|1 - \bar z_1 z_2|^{2\lambda}} \sum_{n \geq 1} { \lambda + 1 \choose n } \frac{ | z_1 - z_2 |^{ 2n } }{ ( 1 - | z_1 |^2 )^n ( 1 - | z_2 |^2 )^n } ~  \right.\\
&\left. + \frac{(1 - |z_1|^2)^{\lambda+1} (1 - |z_2|^2)^{\lambda+1}}{|1 - \bar z_1 z_2|^{2\lambda}} \sum_{n \geq 2} { \lambda \choose n } \frac{ | z_1 - z_2 |^{ 2n - 2 } }{ ( 1 - | z_1 |^2 )^n ( 1 - | z_2 |^2 )^n } ~  \right.\\
&\left. + \frac{ (1 - |z_1|^2)^{\lambda+1} (1 - |z_2|^2)^{\lambda+1}}{2|1 - \bar z_1 z_2|^{2\lambda}} \left(\sum_{ n \geq 2 } 
{ \lambda \choose n } 
\frac{ | z_1 - z_2 |^{ 2n -2 } }{ ( 1 - | z_1 |^2 )^{ n } ( 1 - | z_2 |^2 )^{ n } }\right)\right.\\
& \times \left.\left(\sum_{ m \geq 1 } { \lambda + 1 \choose m } \frac{ | z_1 - z_2 |^{ 2m } }{ ( 1 - | z_1 |^2 )^{ m } ( 1 - | z_2 |^2 )^{ m } }\right)\right] \end{flalign*}
A direct computation verifies that the coefficient of $|z_1 - z_2|^2$ in the above expression is $0$. Now, the expression of $H^{(\lambda)}_{  \boldsymbol{ s } }(\boldsymbol{z}, \boldsymbol{z})$, as given in the statement, is obtained by collecting all the coefficients of $|z_1 - z_2|^{2p - 4}$ for every $p \geq 2$. Equation \eqref{resdetcurv} is obtained by substituting $z_1 = z_2$ in Equation \eqref{H(z)1}.
\end{proof}

We now prove that if  $\lambda_i > 0$ and $\nu_i \in \mathcal{W}_{\mathbb G_2, a}(B^{(\lambda_i)}) \cup \{ 0 \}$, $i = 1, 2$, then the Hilbert modules $\mathbb H^{(\lambda_1, \nu_1)}$ and $\mathbb H^{(\lambda_2, \nu_2)}$ are unitarily inequivalent for distinct pairs $(\lambda_1, \nu_1)$ and $(\lambda_2, \nu_2)$. 

\begin{theorem}\label{inequivofdetker}
 For $\lambda_i > 0$ and $\nu_i \in \mathcal{W}_{\mathbb G_2, a}(B^{(\lambda_i)}) \cup \{ 0 \}$, $i = 1, 2$, the Hilbert modules $\mathbb H^{(\lambda_1, \nu_1)}$ and $\mathbb H^{(\lambda_2, \nu_2)}$ are unitarily equivalent if and only if $\lambda_1 = \lambda_2$ and $\nu_1 = \nu_2$.
\end{theorem}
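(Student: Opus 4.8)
The plan is to follow the template of Theorem~\ref{inequivofweightedbergker}, replacing the kernel $(B^{(\lambda)})^\nu$ by the determinantal kernel $\det\mathbb B^{(\lambda,\nu)}=\det\big((B^{(\lambda)})^{\nu+2}\mathbb B^{(\lambda)}\big)=(B^{(\lambda)})^{2(\nu+2)}\det\mathbb B^{(\lambda)}$, and to extract two numerical relations among the parameters that together force $(\lambda_1,\nu_1)=(\lambda_2,\nu_2)$. The ``if'' direction is immediate. For ``only if'', suppose $\mathbb H^{(\lambda_1,\nu_1)}\cong\mathbb H^{(\lambda_2,\nu_2)}$. Since both are analytic Hilbert modules, the equivalence is a change of scale: there is a nowhere vanishing holomorphic $F:\mathbb G_2\to\mathbb C$ with
\[
\det\mathbb B^{(\lambda_1,\nu_1)}(\boldsymbol u,\boldsymbol v)=F(\boldsymbol u)\,\det\mathbb B^{(\lambda_2,\nu_2)}(\boldsymbol u,\boldsymbol v)\,\overline{F(\boldsymbol v)}.
\]
Pulling back by the symmetrization map and writing $F_{\boldsymbol s}=F\circ\boldsymbol s$, this becomes an identity for $\det\mathbb B_{\boldsymbol s}^{(\lambda_i,\nu_i)}$ on $\mathbb D^2\times\mathbb D^2$, exactly as in the proof of Theorem~\ref{inequivofweightedbergker}. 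I will then probe this identity on the two natural slices used there, the diagonal $\Delta$ and the fundamental-set slice $\{(z,0)\}$.

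For the first relation, I restrict to $\Delta=\{(z,z):z\in\mathbb D\}$ and put $\boldsymbol z=\boldsymbol w=(z,z)$. By Equation~\eqref{resdetcurv} of Lemma~\ref{lemcurv2}, $\det\mathbb B_{\boldsymbol s}^{(\lambda_i,\nu_i)}((z,z),(z,z))$ is a constant multiple of $(1-|z|^2)^{-4[(\nu_i+2)(\lambda_i+1)+2]}$. Applying $\partial\bar\partial\log$ to the change-of-scale identity annihilates the pluriharmonic term $\log|F_{\boldsymbol s}(z,z)|^2$, and comparing the coefficients of $(1-|z|^2)^{-2}\,dz\wedge d\bar z$ yields
\[
(\nu_1+2)(\lambda_1+1)=(\nu_2+2)(\lambda_2+1),
\]
which I call relation $(\mathrm{I})$.

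For the second relation, I keep $\boldsymbol z=\boldsymbol w$ but now apply the transverse operator $\partial_2\bar\partial_2\log$ (which again annihilates $\log|F_{\boldsymbol s}|^2$) and evaluate on the fundamental-set slice $(z,0)$. Using the factorization $\log\det\mathbb B_{\boldsymbol s}^{(\lambda,\nu)}(\boldsymbol z,\boldsymbol z)=2(\nu+2)\log B_{\boldsymbol s}^{(\lambda)}(\boldsymbol z,\boldsymbol z)+\log\det\mathbb B^{(\lambda)}(\boldsymbol s(\boldsymbol z))$ and setting $e^{(\lambda)}(z):=\partial_2\bar\partial_2\log\det\mathbb B^{(\lambda)}(\boldsymbol s(\cdot))\big|_{(z,0)}$, the identity becomes
\[
2(\nu_1+2)\,b_{2\bar 2}^{(\lambda_1)}(z,0)+e^{(\lambda_1)}(z)=2(\nu_2+2)\,b_{2\bar 2}^{(\lambda_2)}(z,0)+e^{(\lambda_2)}(z),\qquad z\in\mathbb D.
\]
Proposition~\ref{curv} gives $b_{2\bar2}^{(\lambda)}(z,0)=(1-|z|^2)^2\,b_{1\bar1}^{(\lambda)}(z,0)\to\lambda$ as $z\to1$, while Lemma~\ref{det-curv} (together with Lemma~\ref{lemcurv2}) supplies the boundary limit $e^{(\lambda)}(z)\to E(\lambda)$. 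Passing to the limit $z\to1$ produces a relation $(\mathrm{II})$: $2(\nu_1+2)\lambda_1+E(\lambda_1)=2(\nu_2+2)\lambda_2+E(\lambda_2)$.

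Finally, I eliminate $\nu$ using $(\mathrm{I})$: with the common value $P=(\nu_i+2)(\lambda_i+1)$, relation $(\mathrm{II})$ becomes an equation in $\lambda_1,\lambda_2$ alone, namely $2P\,\tfrac{\lambda_1}{\lambda_1+1}+E(\lambda_1)=2P\,\tfrac{\lambda_2}{\lambda_2+1}+E(\lambda_2)$. Since $\lambda\mapsto 2P\,\tfrac{\lambda}{\lambda+1}$ is strictly increasing on $(0,\infty)$ for each fixed $P>0$, strict monotonicity of the whole left-hand side forces $\lambda_1=\lambda_2$, and then $(\mathrm{I})$ gives $\nu_1=\nu_2$. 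The main obstacle is precisely the explicit evaluation of the transverse boundary limit $E(\lambda)$ of the curvature of $\det\mathbb B^{(\lambda)}$ and the verification of the monotonicity required to close the system; everything else is a routine adaptation of Theorem~\ref{inequivofweightedbergker}. Should the single limit $(\mathrm{II})$ prove insufficient, I would supply further matching relations by invoking Theorem~\ref{module equiv} and Theorem~\ref{thm:2.2} to reduce the equivalence to equality of the full curvature matrices on the fundamental set $\{(r,0):0\le r<1\}$, matching additional entries (or additional asymptotic coefficients in $r$) to pin down both parameters.
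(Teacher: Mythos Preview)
Your outline is essentially the paper's own proof: same change-of-scale identity, same two probes (restriction to $\Delta$ to extract relation~(I), then $\partial_2\bar\partial_2\log$ on the fundamental slice $\{(x,0)\}$ followed by $x\to 1$ to extract relation~(II)). The only substantive piece you have left undone---and correctly flagged as ``the main obstacle''---is the explicit boundary limit $E(\lambda)=\lim_{x\to 1}e^{(\lambda)}(x)$. The paper carries this out directly: using the factorisation of Lemma~\ref{det-curv} one rewrites $\partial_2\bar\partial_2\log\det\mathbb B_{\boldsymbol s}^{(\lambda,\nu)}\big|_{(x,0)}$ as $(2\nu+1)b_{2\bar2}^{(\lambda)}(x,0)+b_{2\bar2}^{(\lambda+2)}(x,0)+(\lambda+1)+\partial_2\bar\partial_2\log H_{\boldsymbol s}^{(\lambda)}\big|_{(x,0)}$, and then a bare-hands computation (introducing auxiliary functions $G_1,\dots,G_4$ and clearing denominators) shows that $\partial_2\bar\partial_2\log H_{\boldsymbol s}^{(\lambda)}\big|_{(x,0)}\to\lambda$ as $x\to 1$. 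In your notation this gives
\[
E(\lambda)=-3\lambda+(\lambda+2)+(\lambda+1)+\lambda=3,
\]
a constant independent of $\lambda$. Thus relation~(II) reads $(\nu_1+2)\lambda_1=(\nu_2+2)\lambda_2$; subtracting from~(I) yields $\nu_1+2=\nu_2+2$, hence $\nu_1=\nu_2$ and then $\lambda_1=\lambda_2$.

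Two remarks on the proposal as written. First, Lemmas~\ref{det-curv} and~\ref{lemcurv2} do \emph{not} by themselves supply the limit $E(\lambda)$: Lemma~\ref{lemcurv2} controls $H^{(\lambda)}_{\boldsymbol s}$ near the diagonal, whereas you need $\partial_2\bar\partial_2\log H^{(\lambda)}_{\boldsymbol s}$ at $(x,0)$ as $x\to 1$, which is a separate and genuinely laborious calculation. Second, your monotonicity argument is incomplete as stated: strict monotonicity of $\lambda\mapsto 2P\,\lambda/(\lambda+1)$ alone does not force $\lambda_1=\lambda_2$ unless you also know something about $E(\lambda)$. Fortunately $E$ is constant, so the monotonicity discussion is unnecessary and the conclusion follows by plain subtraction, exactly as in the paper.
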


\begin{proof}
Suppose that $\mathbb H^{(\lambda_1, \nu_1)}$ and $\mathbb H^{(\lambda_2, \nu_2)}$ be unitarily equivalent. Then there exists a non-vanishing holomorphic function $ F : \mathbb{ G }_2 \rightarrow \mathbb{ C } $ such that
    \begin{equation} \label{firstdetkerequiv1}
        \det \mathbb B^{ ( \lambda_1, \nu_1 ) } ( \boldsymbol{s} ( \boldsymbol{ z } ), \boldsymbol{s} ( \boldsymbol{ w } ) ) =  F (  \boldsymbol{s} ( \boldsymbol{ z } ) ) \det \mathbb B^{ ( \lambda_2, \nu_2 ) } ( \boldsymbol{s} ( \boldsymbol{ z } ), \boldsymbol{s} ( \boldsymbol{ w } ) ) \overline{ F ( s ( \boldsymbol{ w } ) ) }, ~~~ \boldsymbol{ z }, \boldsymbol{ w } \in \mathbb{ D }^2 .
    \end{equation}
We now restrict this equation to the diagonal subset $ \Delta $  of $ \mathbb{ D }^2 $ and substitute the values of  $\det \mathbb B^{ ( \lambda_i, \nu_i ) } ( \boldsymbol{s} ( z,z ), \boldsymbol{s} ( z,z ) )$, $i = 1, 2$, from Equation \eqref{resdetcurv} to obtain
\begin{align*} 
       &  \left( \frac{ \lambda_1 }{ 2 } \right)^{ 2 (\nu_1+2) } \times \frac{ ( \lambda_1 + 1 ) ( \lambda_1 + 2 ) ( 2 \lambda_1 + 1 ) }{ 6 } \times \frac{ 1 }{ ( 1 - | z |^2 )^{ 4 [ (\nu_1+2) ( \lambda_1 + 1 ) + 2 ] } }  \\
     \nonumber  &\phantom{gadadh} = | F ( s ( z, z ) ) |^2 \left( \frac{ \lambda_2 }{ 2 } \right)^{ 2 (\nu_2+2) } \times \frac{ ( \lambda_2 + 1 ) ( \lambda_2 + 2 ) ( 2 \lambda_2 + 1 ) }{ 6 } \times \frac{ 1 }{ ( 1 - | z |^2 )^{ 4 [ (\nu_2+2) ( \lambda_2 + 1 ) + 2 ] } } .
\end{align*}
Next, applying $ \partial \overline{ \partial } \log $ to both sides of this resulting equation and evaluating at $ z = 0 $, we have
\begin{equation} \label{detkerequiv1x}
(\nu_1+2)( \lambda_1 + 1 ) = (\nu_2 + 2)( \lambda_2 + 1 ) .
\end{equation}
We let $F_s = F \circ s$ and rewrite Equation \eqref{firstdetkerequiv1} at $(\boldsymbol{z}, \boldsymbol{z}),$ $\boldsymbol{z} \in \mathbb D^2$ as
\begin{equation*}
\det \mathbb B_s^{ ( \lambda_1, \nu_1 ) } (  \boldsymbol{ z } , \boldsymbol{ z }  ) =  F_s ( \boldsymbol{ z } ) \det \mathbb B_s^{ ( \lambda_2, \nu_2 ) } (\boldsymbol{ z }, \boldsymbol{ z } ) \overline{ F_s ( \boldsymbol{ z } ) }, ~~~ \boldsymbol{ z }, \in \mathbb{ D }^2 .
\end{equation*}
After operating $\partial_2 \bar{\partial}_2$ followed by taking the logarithm to both sides of this equation and then evaluating at $(x, 0) \in (0, 1) \times \{ 0 \} $, it yields that 
\begin{align*}
\nonumber &(2\nu_1 + 1) g_{2\bar 2}^{(\lambda_1)} (x, 0) + g_{2 \bar 2}^{(\lambda_1)}(x, 0) + \lambda_1 + 1 + \partial_2\bar \partial_2 \log H^{(\lambda_1)}_{ \boldsymbol{ s } }((x, 0), (x, 0))\\
 & \phantom{gadadhar} = (2\nu_2 + 1) g_{2\bar 2}^{(\lambda_2)} (x, 0) + g_{2 \bar 2}^{(\lambda_2)}(x, 0) + \lambda_2 + 1 + \partial_2\bar \partial_2 \log H^{(\lambda_2)}_{ \boldsymbol{ s } }((x, 0), (x, 0)).
\end{align*}
Here, $H^{(\lambda_i)}_{ \boldsymbol{ s }}( \boldsymbol{ z }, \boldsymbol{ z })$, $i = 1, 2$, is as defined in Equation \eqref{H(z)}. For $i = 1, 2$, consider the function
    \begin{equation*} 
        L_i(\boldsymbol{z}) = B_{ \boldsymbol{ s } }^{(\lambda_i)}(\boldsymbol{z}, \boldsymbol{z})\left(|1 - \bar z_1 z_2|^{2\lambda_i + 2} + (1 - |z_1|)^{\lambda_i+1}(1 - |z_2|)^{\lambda_i+1} \right) - \lambda_i,\,\,\boldsymbol{z} = (z_1, z_2) \in \mathbb D^2.
    \end{equation*}
Note that $L_i(\boldsymbol{z}) = |z_1 - z_2|^4 H^{(\lambda_i)}_{ \boldsymbol{ s } }(\boldsymbol{z}, \boldsymbol{z})$ and therefore, for any $x \in (0, 1)$, we have
$$\partial_2 \bar \partial_2 \log H^{(\lambda_i)}_s((x, 0), (x, 0)) = \partial_2 \bar \partial_2 \log L_i(x, 0) = \frac{L_i(x, 0) \partial_2 \bar \partial_2 L_i(x, 0) - \partial_2 L_i(x, 0)\bar \partial_2 L_i(x, 0)}{L_i(x, 0)^2}.$$
A direct computation shows that 
    \begin{equation*} 
      \partial_2 \bar \partial_2  L_i(x, 0) = \frac{G^{(i)}_1(x)}{2x^4(1 - x^2)^{\lambda_i}} - \frac{G^{(i)}_2(x)}{x^2(1 - x^2)^{\lambda_i}} + \frac{(\lambda_i + 1)^2\left(1 - (1 - x^2)^{\lambda_i}\right)}{2(1 - x^2)^{\lambda_i}}
    \end{equation*}
and
    \begin{equation*} 
      \partial_2 L_i(x, 0) \bar \partial_2  L_i(x, 0) = \frac{G^{(i)}_3(x)}{4x^6(1 - x^2)^{2\lambda_i}} - \frac{G^{(i)}_4(x)}{2x^4(1 - x^2)^{2\lambda_i}} + \frac{(\lambda_i + 1)^2\left(1 - (1 - x^2)^{\lambda_i}\right)^2}{4x^2(1 - x^2)^{2\lambda_i}}
    \end{equation*}
where $$G^{(i)}_1(x) = \left[1 + \lambda_i x^2 - (1 + \lambda_i x^2)^2(1 - x^2)^{\lambda_i}\right] \left(1 + (1 - x^2)^{\lambda_i + 1}\right),$$ $$G^{(i)}_2(x) = (\lambda_i + 1)\left[1 - (1 - x^2)^{\lambda_i} -\lambda_i x^2(1 - x^2)^{\lambda_i}\right],$$ 
$$G^{(i)}_3(x) = \left[1 - (1 - x^2)^{\lambda_i} - \lambda_i x^2(1 - x^2)^{\lambda_i} \right]^2 \left(1 + (1 - x^2)^{\lambda_i + 1}\right)^2\,\,\mbox{and}$$ 
$$G^{(i)}_4(x) = (\lambda_i + 1)\left(1 + (1 - x^2)^{\lambda_i + 1}\right)\left(1 - (1 - x^2)^{\lambda_i}\right) \left[1 - (1 - x^2)^{\lambda_i} - \lambda_i x^2 (1 - x^2)^{\lambda_i}\right].$$
Also, note that 
$$L_i(x, 0) = \frac{\tilde L_i(x, 0)}{2x^2(1 - x^2)^{\lambda_i}},$$
where $\tilde L_i(x, 0) = \left(1 - (1 - x^2)^{\lambda_i}\right)\left(1 + (1 - x^2)^{\lambda_i+1}\right) - 2\lambda_i x^2(1 - x^2)^{\lambda_i}$ for $x \in (0, 1)$. Now, substituting the values of $L_i(x, 0)$, $\partial_2 \bar \partial_2  L_i(x, 0)$, $\partial_2 L_i(x, 0) \bar \partial_2  L_i(x, 0)$ in Equation \eqref{eqn:4.22} and then multiplying both sides of the resulting equation by $16L_1(x, 0)^2 L_2(x, 0)^2 x^{10} (1 - x^2)^{2(\lambda_1 + \lambda_2)}$, we obtain
\begin{flalign*}
&\tilde L_1(x, 0)^2 \tilde L_2(x, 0)^2 \left((2\nu_1 + 1)x^2 b_{2\bar 2}^{(\lambda_1)}(x, 0) + x^2 b_{2 \bar 2}^{(\lambda_1 + 2)}(x, 0) + x^2 (\lambda_1 + 1) \right) + \tilde L_1(x, 0) \tilde L_2(x, 0)^2\\
& \left( G^{(1)}_1(x)- 2x^2 G^{(1)}_2(x) +
(\lambda_1 + 1)^2x^4 \left(1 - (1 - x^2)^{\lambda_1}\right) \right) - \tilde L_2(x, 0)^2 \left( G^{(1)}_3(x) - 2x^2G^{(1)}_4(x)
+ \right.\\
&\left. (\lambda_1 + 1)^2x^4 \left(1 - (1 - x^2)^{\lambda_1}\right)^2 \right) = \tilde L_1(x, 0)^2 \tilde L_2(x, 0)^2 \left((2\nu_2 + 1)x^2 b_{2\bar 2}^{(\lambda_2)}(x, 0) + x^2 b_{2 \bar 2}^{(\lambda_2 + 2)}(x, 0)  \right.\\ 
&\left. + x^2 (\lambda_2 + 1) \right) +
\tilde L_1(x, 0)^2 \tilde L_2(x, 0) \left( G^{(2)}_1(x)- 2x^2 G^{(2)}_2(x) +
(\lambda_2 + 1)^2x^4 \left(1 - (1 - x^2)^{\lambda_2}\right) \right)\\
&- \tilde L_1(x, 0)^2 \left( G^{(2)}_3(x) - 2x^2G^{(2)}_4(x)
+ (\lambda_2 + 1)^2x^4 \left(1 - (1 - x^2)^{\lambda_2}\right)^2 \right).   
\end{flalign*}
Finally, taking the limit $x \to 1$ to both sides of the equation above and observing that $b_{2 \bar 2}^{(\lambda_i)}(x, 0) \to \lambda_i$, $\tilde L_i(x, 0) \to 1$, $G^{(i)}_1(x) \to \lambda_i + 1$, $G^{(i)}_2(x) \to \lambda_i + 1$, $G^{(i)}_3(x) \to 1$, $G^{(i)}_4(x) \to \lambda_i + 1$, we obtain
\begin{equation}\label{detker5x}
 (2\nu_1 + 1)\lambda_1 + 3\lambda_1 = (2\nu_2 + 1)\lambda_2 + 3\lambda_2.   
\end{equation}
Then Equations \eqref{detkerequiv1x} and \eqref{detker5x} together imply that $\lambda_1 = \lambda_2$ and $\nu_1 = \nu_2$.
\end{proof}

As promised in the introduction, we now prove that no two of the Hilbert modules $\mathbb H^{(\lambda, \nu)}$ and $\mathbb A^{(\mu, \eta)}$ are unitarily equivalent.

\begin{theorem} \label{thminequiv}
For any pair $\lambda, \mu > 0$, pick $\nu \in \mathcal{W}_{\mathbb G_2, a}(B^{(\lambda)}) \cup \{ 0 \} $ and $\eta \in \mathcal{W}_{\mathbb G_2, a}(B^{(\mu)})$. Then \rm{Aut}$(\mathbb G_2)$-homogeneous analytic Hilbert modules $\mathbb H^{(\lambda, \nu)}$ and $\mathbb A^{(\mu, \eta)}$ are unitarily inequivalent. 
\end{theorem}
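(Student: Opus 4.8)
The plan is to assume $\mathbb H^{(\lambda,\nu)}$ and $\mathbb A^{(\mu,\eta)}$ are unitarily equivalent and to reach a contradiction by comparing the two curvatures along the complex slice $\mathbb D\times\{0\}$ of $\mathbb D^2$. Both are analytic Hilbert modules over $\mathbb G_2$, so, as in the discussion preceding Theorem \ref{inequivofweightedbergker}, their equivalence is a change of scale: there is a nowhere-vanishing holomorphic function $F:\mathbb G_2\to\mathbb C$ with
\[
\det\mathbb B^{(\lambda,\nu)}(\boldsymbol s(\boldsymbol z),\boldsymbol s(\boldsymbol w))
=F(\boldsymbol s(\boldsymbol z))\,\big(B^{(\mu)}(\boldsymbol s(\boldsymbol z),\boldsymbol s(\boldsymbol w))\big)^{\eta}\,\overline{F(\boldsymbol s(\boldsymbol w))},\qquad \boldsymbol z,\boldsymbol w\in\mathbb D^2 .
\]
Setting $F_{\boldsymbol s}=F\circ\boldsymbol s$ and applying $\partial_{z_i}\overline\partial_{z_j}\log$ to both sides on the diagonal $\boldsymbol w=\boldsymbol z$, the term $\log|F_{\boldsymbol s}|^2$ is pluriharmonic and disappears, forcing equality of the curvature matrices of the pulled-back kernels:
\[
\mathbb K^{\det}_{i\bar j}(\boldsymbol z)=\eta\, b^{(\mu)}_{i\bar j}(\boldsymbol z),\qquad 1\le i,j\le 2,\ \boldsymbol z\in\mathbb D^2 ,
\]
where $\mathbb K^{\det}_{i\bar j}$ are the curvature entries of $\det\mathbb B^{(\lambda,\nu)}_{\boldsymbol s}$ and $b^{(\mu)}_{i\bar j}$ are those of $B^{(\mu)}_{\boldsymbol s}$ recorded in Proposition \ref{curv}. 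Evaluating at the fundamental-set points $\boldsymbol z=(x,0)$, $x\in(0,1)$, produces two scalar identities, one from the $(1,\bar1)$ and one from the $(2,\bar2)$ entry, which I will push to the boundary $x\to1$.

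For the $(2,\bar2)$ entry, the limit $\lim_{x\to1}\partial_2\overline\partial_2\log\det\mathbb B^{(\lambda,\nu)}_{\boldsymbol s}((x,0),(x,0))$ is exactly the quantity computed in the proof of Theorem \ref{inequivofdetker}, namely $(2\nu+1)\lambda+3\lambda=2\lambda(\nu+2)$, while $b^{(\mu)}_{2\bar2}(x,0)\to\mu$ by Proposition \ref{curv}; this gives $2\lambda(\nu+2)=\eta\mu$. For the $(1,\bar1)$ entry I would instead use the product form of $\det\mathbb B^{(\lambda,\nu)}_{\boldsymbol s}$ appearing just before Lemma \ref{lemcurv2}: restricting to $z_2=0$,
\[
\log\det\mathbb B^{(\lambda,\nu)}_{\boldsymbol s}((z_1,0),(z_1,0))
=c+(2\nu+1)\log B_0^{(\lambda)}(z_1)+\log B_0^{(\lambda+2)}(z_1)-(\lambda+1)\log(1-|z_1|^2)+\log H^{(\lambda)}_{\boldsymbol s}((z_1,0),(z_1,0)),
\]
with $B_0^{(\lambda)}$ as in Lemma \ref{weightedbergkerexpansion} and $c$ a constant. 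Applying $\partial_1\overline\partial_1$, multiplying by $(1-|z_1|^2)^2$ and letting $x\to1$, the identity $(1-|z_1|^2)^2 b^{(\cdot)}_{1\bar1}=b^{(\cdot)}_{2\bar2}$ turns the first three summands into $(2\nu+1)\lambda$, $\lambda+2$ and $\lambda+1$, while the closed form \eqref{H(z)} gives $H^{(\lambda)}_{\boldsymbol s}((x,0),(x,0))\sim\tfrac12(1-x^2)^{-\lambda}$, whence $(1-x^2)^2\partial_1\overline\partial_1\log H^{(\lambda)}_{\boldsymbol s}((x,0),(x,0))\to\lambda$. Since $(1-x^2)^2 b^{(\mu)}_{1\bar1}(x,0)=b^{(\mu)}_{2\bar2}(x,0)\to\mu$, this yields $2\lambda(\nu+2)+3=\eta\mu$. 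Thus the two boundary limits demand
\[
2\lambda(\nu+2)=\eta\mu\qquad\text{and}\qquad 2\lambda(\nu+2)+3=\eta\mu ,
\]
which cannot both hold; hence no change of scale exists and the modules are inequivalent.

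Conceptually, the obstruction is that the weighted Bergman curvature obeys the exact slice relation $b^{(\mu)}_{2\bar2}=(1-|z_1|^2)^2 b^{(\mu)}_{1\bar1}$, so its $(1,\bar1)$ and $(2,\bar2)$ boundary weights coincide, whereas the extra factor $H^{(\lambda)}_{\boldsymbol s}$ in the determinantal kernel contributes different boundary weights along and transverse to the slice, producing the fatal constant $3$. The only genuinely new work is the $(1,\bar1)$ boundary limit; the main technical point to check carefully is that the closed-form asymptotics $H^{(\lambda)}_{\boldsymbol s}((x,0),(x,0))\sim\tfrac12(1-x^2)^{-\lambda}$ indeed control $\partial_1\overline\partial_1\log H^{(\lambda)}_{\boldsymbol s}$ to leading order as $x\to1$, and that the stated $(2,\bar2)$ limit is read off correctly from the proof of Theorem \ref{inequivofdetker}.
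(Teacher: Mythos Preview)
Your two boundary limits are actually the \emph{same} equation, so no contradiction arises.  The error is twofold.

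First, you misread equation \eqref{detker5x} from the proof of Theorem~\ref{inequivofdetker}.  There the quantity obtained on \emph{each} side at $x\to 1$ is $2(\nu_i+2)\lambda_i+3$, and the $+3$ was silently cancelled because it appears on both sides.  Computing directly from equation \eqref{eqn:4.22}: the first three terms give $(2\nu+1)\lambda+(\lambda+2)+(\lambda+1)=(2\nu+3)\lambda+3$, and the remaining limit $\lim_{x\to 1}\partial_2\bar\partial_2\log H^{(\lambda)}_{\boldsymbol s}((x,0),(x,0))$ equals $\lambda$ (this is exactly the content of the long computation with the functions $G_j$ in that proof).  Hence the $(2,\bar 2)$ boundary limit is $2(\nu+2)\lambda+3=\eta\mu$, which is precisely the paper's equation \eqref{kerequiv7} and identical to what your $(1,\bar 1)$ computation yields.

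Second, and more structurally: your ``conceptual obstruction'' is illusory.  The identity $b_{2\bar 2}(r,0)=(1-r^2)^2 b_{1\bar 1}(r,0)$ is \emph{not} special to the weighted Bergman kernel.  Writing the pulled-back curvature of any kernel $K$ on $\mathbb G_2$ as $\mathbb K_{\boldsymbol s}(r,0)=D\boldsymbol s(r,0)^{\operatorname{tr}}\mathbb K(r,0)\overline{D\boldsymbol s(r,0)}$ and imposing the homogeneity relation $r(r^2-2)\mathbb K_{1\bar 1}=2\mathbb K_{1\bar 2}+r\mathbb K_{2\bar 2}$ from Lemma~\ref{curvature condition}, one gets $(\mathbb K_{\boldsymbol s})_{2\bar 2}(r,0)=(1-r^2)^2(\mathbb K_{\boldsymbol s})_{1\bar 1}(r,0)$ for every $\operatorname{Aut}(\mathbb G_2)$-homogeneous module, in particular for $\det\mathbb B^{(\lambda,\nu)}$.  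Thus along the slice $(r,0)$, the $(1,\bar 1)$ and $(2,\bar 2)$ curvature equalities carry the same information and cannot contradict one another.

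The paper obtains one constraint by restricting to the diagonal $\{(z,z)\}$, a second from the $(2,\bar 2)$ entry along $(x,0)$, and then a third independent constraint by restricting to the anti-diagonal $\{(z,-z)\}$ and comparing the $|z|^4$-coefficients of the resulting power series; only these three together force a quadratic in $\lambda$ with no positive root.
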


\begin{proof}
Assume that the Hilbert modules $\mathbb H^{(\lambda, \nu)}$ and $\mathbb A^{(\mu, \eta)}$ are unitarily equivalent. Then there exists a non-vanishing holomorphic function $ F : \mathbb{ G }_2 \rightarrow \mathbb{ C } $ such that
    \begin{equation} \label{kerequiv1}
        \det \mathbb B^{ ( \lambda, \nu ) } ( \boldsymbol{s} ( \boldsymbol{ z } ), \boldsymbol{s} ( \boldsymbol{ w } ) ) =  F (  \boldsymbol{s} ( \boldsymbol{ z } ) ) \left( B^{ ( \mu ) } ( \boldsymbol{s} ( \boldsymbol{ z } ), \boldsymbol{s} ( \boldsymbol{ w } ) ) \right)^{ \eta } \overline{ F ( \boldsymbol{s} ( \boldsymbol{ w } ) ) }, ~~~ \boldsymbol{ z }, \boldsymbol{ w } \in \mathbb{ D }^2 .
    \end{equation}
Substituting $ \boldsymbol{ z } $ and $ \boldsymbol{ w } $ by $ ( z, z ) \in \mathbb{ D }^2 $ in Equations \eqref{resBerg}, \eqref{resdetcurv} and \eqref{kerequiv1},  we obtain  
    \begin{align} \label{kerequiv2}
       & | F ( \boldsymbol{s} ( z, z ) ) |^2 \left( \frac{ \mu }{ 2 } \times \frac{ 1 }{ ( 1 - | z |^2 )^{ 2 \mu + 2 } } \right)^{ \eta } \\
     \nonumber  &\phantom{gadadharmis} = \left( \frac{ \lambda }{ 2 } \right)^{ 2 (\nu+2) } \times \frac{ ( \lambda + 1 ) ( \lambda + 2 ) ( 2 \lambda + 1 ) }{ 6 } \times \frac{ 1 }{ ( 1 - | z |^2 )^{ 4 [ (\nu+2) ( \lambda + 1 ) + 2 ] } } .
    \end{align}
    Applying the operator $ \partial \overline{ \partial } \log $ to both sides of Equation \eqref{kerequiv2}, we get
    \begin{equation} \label{eqn:4.21}
      \eta ( \mu + 1 ) = 2[ (\nu+2)
      ( \lambda + 1 ) + 2 ] .
    \end{equation}
We restrict Equation \eqref{kerequiv1} to $(\boldsymbol{z}, \boldsymbol{z}),$ $\boldsymbol{z} \in \mathbb D^2$, and rewrite it as
    \begin{equation} \label{equn:4.23} 
       \det \mathbb B_{ \boldsymbol{ s } }^{ ( \lambda, \nu ) } ( \boldsymbol{ z } ,  \boldsymbol{ z }  ) =  F_s (  \boldsymbol{ z }  ) \left( B^{ ( \mu ) } ( \boldsymbol{ s } ( \boldsymbol{ z } ) , \boldsymbol{ s } ( \boldsymbol{ z } ) ) \right)^{ \eta } \overline{ F_{\boldsymbol{s}} (  \boldsymbol{ z }  )} , ~~~ \boldsymbol{ z } \in \mathbb{ D }^2 ,
    \end{equation} 
 where $ F_{\boldsymbol{s}} = F \circ \boldsymbol{s}$. Applying $\partial_2\bar \partial_2 \log $ to the both sides of this equation and evaluating at $(x, 0) \in (0, 1) \times \{ 0 \} $, we obtain
    \begin{equation} \label{eqn:4.22}
        \eta b_{2\bar 2}^{(\mu)}(x,0) = (2\nu + 1) b_{2\bar 2}^{(\lambda)} (x, 0) + b_{2 \bar 2}^{(\lambda+2)}(x, 0) + \lambda + 1 + \partial_2\bar \partial_2 \log H^{(\lambda)}_{ \boldsymbol{ s } }((x, 0), (x, 0)).
    \end{equation}
Here, $H^{(\lambda)}_{ \boldsymbol{ s }}( \boldsymbol{ z }, \boldsymbol{ z })$ is as defined in Equation \eqref{H(z)}. Now consider the function as in the proof of Theorem \ref{inequivofdetker}
    \begin{equation*} \label{kerequiv4}
        L(\boldsymbol{z}) = B_{ \boldsymbol{ s } }^{(\lambda)}(\boldsymbol{z}, \boldsymbol{z})\left(|1 - \bar z_1 z_2|^{2\lambda + 2} + (1 - |z_1|)^{\lambda+1}(1 - |z_2|)^{\lambda+1} \right) - \lambda,\,\,\boldsymbol{z} = (z_1, z_2) \in \mathbb D^2,
    \end{equation*}
and observe that $L(\boldsymbol{z}) = |z_1 - z_2|^4 H^{(\lambda)}_{ \boldsymbol{ s } }(\boldsymbol{z}, \boldsymbol{z})$. Therefore, for any $x \in (0, 1)$, we have
$$\partial_2 \bar \partial_2 \log H^{(\lambda)}_{\boldsymbol{s}}((x, 0), (x, 0)) = \partial_2 \bar \partial_2 \log L(x, 0) = \frac{L(x, 0) \partial_2 \bar \partial_2 L(x, 0) - \partial_2 L(x, 0)\bar \partial_2 L(x, 0)}{L(x, 0)^2}.$$
Multiplying both sides of Equation \eqref{eqn:4.22} by $4L(x, 0)^2 x^6 (1 - x^2)^{2\lambda}$ and then, following a similar computation as given in the proof of Theorem \ref{inequivofdetker}, we obtain
    \begin{equation} \label{kerequiv7}
      \eta \mu = 2(\nu + 2) \lambda + 3.
    \end{equation}
Equations \eqref{eqn:4.21} and \eqref{kerequiv7} together imply that 
$$\mu = \frac{2(\nu+2) \lambda + 3 }{2\nu + 5}.$$

For $z \in \mathbb D$, substituting $z_1 = z$ and $z_2 = -z$ in Equations \eqref{defofwbergker} and \eqref{H(z)}, we have
$$B_{ \boldsymbol{ s } }^{(\lambda)}((z, -z), (z, -z)) = \frac{1}{4} \displaystyle \sum_{n=0}^\infty \frac{(2\lambda)_{2n+1}}{(2n+1)!} |z|^4,$$
$$H^{(\lambda)}_{ \boldsymbol{ s }}((z, -z), (z, -z)) = \frac{1}{32} \displaystyle \sum_{p=1}^\infty \left(\sum_{n+m = p} \frac{(2\lambda)_{2n+1}}{(2n + 1)!} \binom{2\lambda + 2}{2m}\right) |z|^{4(p-1)}.$$
The Equation \eqref{equn:4.23} implies that
    \begin{equation} \label{kerequiv8}
        \det \mathbb B_{ \boldsymbol{ s } }^{ ( \lambda, \nu ) } ((z, -z), (z, -z)) =  F_{\boldsymbol{s}} (z,-z) \left( B_{ \boldsymbol{ s }}^{ ( \mu ) } ((z,-z), (z, -z)) \right)^{ \eta } \overline{ F_{\boldsymbol{s}} (z,-z)}, ~~~ z \in \mathbb{ D }.
    \end{equation}
Since both $ \det \mathbb B_{ \boldsymbol{ s }}^{ ( \lambda, \nu ) } ((z, -z), (z, -z))$ and $\left( B_{ \boldsymbol{ s }}^{ ( \mu ) } ((z,-z), (z, -z)) \right)^{ \eta }$ are functions of $|z|^2$, it follows that $ F_{\boldsymbol{s}}(z, -z)$ is a constant function and 
\begin{equation} \label{eqn:4.27a}
|F_{\boldsymbol{s}}(0,0)|^2 = \frac{ \det \mathbb B_{ \boldsymbol{ s } }^{ ( \lambda, \nu ) }(0,0)}{\left( B_{ \boldsymbol{ s } }^{ ( \mu ) } (0,0) \right)^{ \eta }}.\end{equation}
Substituting the power series representations of $B_{ \boldsymbol{ s } }^{(\lambda)}((z, -z), (z, -z))$, $B_{ \boldsymbol{ s } }^{(\lambda+2)}((z,-z), (z, -z))$, $B_{ \boldsymbol{ s } }^{(\mu)}((z,-z), (z, -z))$ and $H^{(\lambda)}_{ \boldsymbol{ s } }((z, -z), (z, -z))$ in Equation \eqref{kerequiv8}, and using Equation \eqref{eqn:4.27a}, we obtain
\begin{flalign*}
 \nonumber &\left(\frac{\mu}{2}\right)^\eta \frac{\lambda^2}{2^{4\nu + 10}}  \left( \displaystyle \sum_{n=0}^\infty \frac{(2\lambda)_{2n+1}}{(2n+1)!} |z|^{4n} \right)^{2\nu + 1} \left( \displaystyle \sum_{n=0}^\infty \frac{(2\lambda + 4)_{2n+1}}{(2n+1)!} |z|^{4n} \right)\\
 \nonumber &\phantom{SomnathSom}\left[ \displaystyle \sum_{p=1}^\infty \left( \displaystyle \sum_{n+m=p} \frac{(2\lambda)_{2n+1}}{(2n+1)!} \binom{2\lambda + 2}{2m} \right) |z|^{4(p-1)} \right]\\
 & = \frac{ \det \mathbb B_{ \boldsymbol{ s } }^{ ( \lambda, \nu ) } (0,0)}{2^{2\eta}} \left(1 - |z|^4\right)^{2\lambda + 2} \left(\displaystyle \sum_{n=0}^{\infty} \frac{(2\mu)_{2n+1}}{(2n+1)!} |z|^{4n}\right)^\eta.
\end{flalign*}
Equating the coefficient of $|z|^4$ from both sides of the equation above, we obtain
    \begin{equation*}
        ((\nu+2)(\lambda + 1) + 2)\mu = (\nu+2) \lambda (\lambda + 1) + \frac{-2\lambda^2 + 13\lambda + 19}{5}
    \end{equation*}
and after substituting $\mu = \frac{2(\nu+2) \lambda + 3 }{2\nu + 5}$, we get
    \begin{equation}\label{kerequiv9}
    \nu\lambda^2 + (5 - 4\nu)\lambda + (23\nu + 35) = 0.
    \end{equation}
Equation \eqref{kerequiv9} is  quadratic in the variable $\lambda$. The discriminant is
$$-76\nu^2 - 180\nu + 25,$$
which is negative if $\nu \geq 1$. Thus, there does not exist any $\lambda > 0$ satisfying Equation \eqref{kerequiv9} if $\nu \geq 1$. If $0 \leq \nu < 1$, the coefficients are non-negative and at least one of them is positive. This implies that there does not exist $\lambda > 0$ satisfying Equation \eqref{kerequiv9}.  
\end{proof}

\begin{remark} \label{vb} Let $E^{(\lambda, \nu)}$ be the  trivial holomorphic line bundle over $\mathbb{G}_2$ equipped with the Hermitian metric $\det \mathbb B^{(\lambda, \nu)} (\boldsymbol{u}, \boldsymbol{u})$, $\boldsymbol{u} \in \mathbb G_2$  and $F^{(\mu,\eta)}$ be the trivial holomorphic line bundle over $\mathbb{G}_2$ equipped with the Hermitian metric  $\left(B^{(\mu)}(\boldsymbol{u}, \boldsymbol{u})\right)^\eta$, $\boldsymbol{u} \in \mathbb G_2$. It follows from the proof of Theorems \ref{powerwtdberg} and \ref{detkernelofberg} that $E^{(\lambda, \nu)}$ and $F^{(\mu, \eta)}$ are homogeneous.  Similarly, from Theorems \ref{inequivofweightedbergker} and  \ref{inequivofdetker}, it follows that $E^{(\lambda, \nu)}$ and $F^{(\mu, \eta)}$ are inequivalent as holomorphic Hermitian line bundles among themselves. Finally, from Theorem \ref{thminequiv}, it follows that $E^{(\lambda, \nu)}$ is not equivalent to $F^{(\mu, \eta)}$ for any pair $(\lambda,\nu)$ and $(\mu, \eta)$ with $ \lambda, \mu, \eta > 0$ and $\nu \geq 0$.    
\end{remark}

We recall that the Ricci form $ \mathrm{Ric} $ of an $ n $ dimensional K\"ahler manifold $ ( M, \omega ) $ with the K\"ahler form $ \omega $ given by $ \omega = \sum_{ i, j = 1 }^n g_{ i \bar{ j } } dz_i \wedge d\bar{z_j} $ is defined as 
    $$ \mathrm{Ric} : = - \sqrt{ - 1 } \sum_{ i, j = 1 }^n \partial_i \bar{ \partial_j } \log \det \big( \! \!\big( g_{ i \bar{ j } } \big) \!\! \big)_{ i, j = 1 }^n dz_i \wedge d\bar{z_j}  $$
(cf. \cite{Mok}). A K\"ahler manifold is said to be K\"ahler-Einstein if the Ricci form $ \mathrm{Ric} $ is a scalar multiple of the  K\"ahler form $ \omega $ (cf. \cite{Mok}).  It is recently shown in \cite{CY} that the Bergman metric $ \sum_{ i, j = 1 }^2 \mathbb{ B }_{ i \bar{ j } }^{ ( 2 ) } dz_i \wedge d \bar{ z_j } $ is not a K\"ahler-Einstein metric on the symmetrized bi-disc $ \mathbb{ G }_2 $. Here, as a direct consequence of Theorem \ref{thminequiv}, we show that none of the weighted Bergman metrics $B^\lambda$, $\lambda >0$, on $\mathbb{G}_2$ are K\"ahler-Einstein. 

\begin{cor} \label{ke}
The weighted Bergman metric $ \sum_{ i, j = 1 }^2 \mathbb{ B }_{ i \bar{ j } }^{ ( \lambda ) } dz_i \wedge d \bar{ z_j } $ on $ \mathbb{ G }_2 $ is not K\"ahler-Einstein for any $ \lambda > 0 $.
\end{cor}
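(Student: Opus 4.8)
The plan is to show that the K\"ahler--Einstein condition forces an equivalence of Hermitian holomorphic line bundles that the computations behind Theorem \ref{thminequiv} already forbid, after specializing the relevant parameters. First I would reformulate the Einstein condition as a change of scale. Since $\mathbb{B}^{(\lambda)}_{i\bar j} = \partial_i\bar\partial_j\log B^{(\lambda)}$, the weighted Bergman metric is K\"ahler--Einstein exactly when there is a real constant $k$ with $\partial_i\bar\partial_j\log\det\mathbb{B}^{(\lambda)} = -k\,\mathbb{B}^{(\lambda)}_{i\bar j}$ for all $i,j$, that is, when $u(\boldsymbol u):=\log\det\mathbb{B}^{(\lambda)}(\boldsymbol u,\boldsymbol u)+k\log B^{(\lambda)}(\boldsymbol u,\boldsymbol u)$ is pluriharmonic on $\mathbb{G}_2$. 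As $\mathbb{G}_2$ is simply connected and $B^{(\lambda)}$ is nowhere vanishing (Proposition \ref{nonvanishing}), such a $u$ equals $\log|F|^2$ for a nowhere vanishing holomorphic $F$ on $\mathbb{G}_2$. Both $\det\mathbb{B}^{(\lambda)}$ and $(B^{(\lambda)})^{-k}$ are holomorphic in the first and anti-holomorphic in the second variable, so the diagonal identity $\det\mathbb{B}^{(\lambda)}(\boldsymbol u,\boldsymbol u)=|F(\boldsymbol u)|^2(B^{(\lambda)}(\boldsymbol u,\boldsymbol u))^{-k}$ polarizes, by the usual identity principle (cf. \cite[Proposition 1, page 10]{dang}), to
\[\det\mathbb{B}^{(\lambda)}(\boldsymbol u,\boldsymbol v)=F(\boldsymbol u)\big(B^{(\lambda)}(\boldsymbol u,\boldsymbol v)\big)^{-k}\overline{F(\boldsymbol v)},\quad \boldsymbol u,\boldsymbol v\in\mathbb{G}_2.\]

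Next I would match this with Equation \eqref{kerequiv1}. The identity above is precisely \eqref{kerequiv1} for the parameters $\nu=-2$ (so that $\det\mathbb{B}^{(\lambda,\nu)}=\det\mathbb{B}^{(\lambda)}$), $\mu=\lambda$ and $\eta=-k$. Although $\nu=-2$ lies outside the Wallach range, the argument proving Theorem \ref{thminequiv} is a chain of explicit identities at the level of Hermitian holomorphic line bundles --- which is exactly what Remark \ref{vb} legitimizes --- and every step uses only the diagonal formulae of Lemma \ref{det-curv} and Lemma \ref{lemcurv2} (valid for all $\lambda>0$) together with the holomorphy of $F$. I would therefore run that argument with the above parameters. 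The diagonal evaluation via \eqref{resdetcurv} and \eqref{resBerg}, combined with the $\mathbb{D}\times\{0\}$ evaluation, yields Equations \eqref{eqn:4.21} and \eqref{kerequiv7}; subtracting them gives $\eta=2\nu+5$ and $\mu=\big(2(\nu+2)\lambda+3\big)/(2\nu+5)$. At $\nu=-2$ these read $\eta=1$ and $\mu=3$, and since the Einstein identity compares $\det\mathbb{B}^{(\lambda)}$ with a power of $B^{(\lambda)}$ of the \emph{same} weight, we have $\mu=\lambda$ and hence $\lambda=3$.

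The remaining input is the anti-diagonal computation on $\boldsymbol s(z,-z)=(0,-z^2)$: expanding the relevant kernels as power series in $|z|^2$ and comparing the coefficient of $|z|^4$ produces Equation \eqref{kerequiv9}, which at $\nu=-2$ becomes $-2\lambda^2+13\lambda-11=0$, with roots $\lambda=1$ and $\lambda=11/2$. Since the previous step forces $\lambda=3$, and $-2(3)^2+13(3)-11=10\neq0$, the K\"ahler--Einstein assumption is untenable for every $\lambda>0$ and every real $k$, which is the assertion of Corollary \ref{ke}.

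I expect the principal obstacle to be bookkeeping rather than conceptual: one must verify that no step in the proof of Theorem \ref{thminequiv} secretly used $\nu\ge 0$ (for non-negative definiteness) or $\mu\ne\lambda$, so that the relations \eqref{eqn:4.21}, \eqref{kerequiv7} and \eqref{kerequiv9} survive the substitution $\nu=-2,\ \mu=\lambda,\ \eta=-k$; Remark \ref{vb} is the tool authorizing this passage to the bundle level. A secondary point needing care is the reduction above, namely confirming that the Einstein equation is genuinely equivalent to global pluriharmonicity of $u$ on the simply connected $\mathbb{G}_2$ and that the diagonal identity polarizes uniquely to the stated change of scale.
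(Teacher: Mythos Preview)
Your argument is essentially correct, but it takes a detour the paper avoids. You fix the parameter $\nu=-2$ (so that $\det\mathbb{B}^{(\lambda,\nu)}=\det\mathbb{B}^{(\lambda)}$) and then re-run the explicit computations from the proof of Theorem \ref{thminequiv} at this boundary value, eventually obtaining the incompatible constraints $\lambda=3$ and $-2\lambda^2+13\lambda-11=0$. This works, but it forces you to audit each step of that proof for hidden uses of $\nu\ge0$ or $\eta>0$ (as you yourself flag), and your appeal to Remark \ref{vb} is slightly off: that remark still assumes $\nu\ge0$, so it does not by itself license $\nu=-2$; what really carries you through is that every equation in the proof of Theorem \ref{thminequiv} is a formal identity in the parameters.

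The paper's proof is shorter because it exploits a degree of freedom you leave on the table. The Einstein identity $\partial_i\bar\partial_j\log\det\mathbb{B}^{(\lambda)}=c\,\partial_i\bar\partial_j\log B^{(\lambda)}$ can be added to $2\nu\,\partial_i\bar\partial_j\log B^{(\lambda)}$ for \emph{any} $\nu$, giving
\[
\partial_i\bar\partial_j\log\det\big((B^{(\lambda)})^{\nu}\mathbb{B}^{(\lambda)}\big)=\partial_i\bar\partial_j\log\big(B^{(\lambda)}\big)^{2\nu+c}.
\]
Now simply choose $\nu$ large enough that the parameters on both sides land in the range covered by Remark \ref{vb} (namely $\nu\ge0$ for the $E$-bundle and $2\nu+c>0$ for the $F$-bundle, with $\mu=\lambda$). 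Equal curvature then means the bundles $E^{(\lambda,\nu)}$ and $F^{(\lambda,2\nu+c)}$ are equivalent, contradicting Remark \ref{vb} directly --- no further computation needed. What your approach buys is an explicit numerical check; what the paper's approach buys is that the entire corollary becomes a two-line consequence of results already proved.
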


\begin{proof}
Suppose that there exists $ c \in \mathbb{ R } $ such that the Ricci form $ \mathrm{Ric}^{ ( \lambda ) } $ on $ \mathbb{ G }_2 $ of the weighted Berman metric satisfies
$$ \mathrm{Ric}^{ ( \lambda ) } = c \sum_{ i, j = 1 }^2 \mathbb{ B }_{ i \bar{ j } }^{ ( \lambda ) } dz_i \wedge d \bar{ z_j } . $$
It follows from the definition of $ \mathrm{ Ric } $ that
$$ \sum_{ i, j = 1 }^n \partial_i \bar{ \partial_j } \log \det \big( \! \!\big( \mathbb{ B }_{ i \bar{ j } }^{ ( \lambda ) } \big) \!\! \big)_{ i, j = 1 }^n dz_i \wedge d\bar{z_j}  = c \sum_{ i, j = 1 }^2 \mathbb{ B }_{ i \bar{ j } }^{ ( \lambda ) } dz_i \wedge d \bar{ z_j } . $$
Since $ \mathbb{ B }_{ i \bar{ j } }^{ ( \lambda ) }  = \partial_i \bar{ \partial_j } \log B^{ ( \lambda ) } $, $ 1 \leq i, j \leq 2 $, it follows that 
\begin{equation}\label{kheqn}
\partial_i \bar{ \partial_j } \log \det \big[ \big( B^{ ( \lambda ) } \big)^{ \nu } \big( \! \!\big( \mathbb{ B }_{ i \bar{ j } }^{ ( \lambda ) } \big) \!\! \big)_{ i, j = 1 }^n \big] =   \partial_i \bar{ \partial_j } \log \big( B^{ ( \lambda ) } \big)^{ 2 \nu + c } , ~ 1 \leq i, j \leq 2 .    
\end{equation} 
From Remark \ref{vb}, we know that the trivial Hermitian holomorphic line bundles $E^{(\lambda, \nu)}$ and $F^{(\mu, \eta)}$ over $\mathbb G_2$ are inequivalent for all choices of $ \lambda, \mu, \eta > 0$ and $\nu \geq 0$.  Thus, if we choose $\nu > 0$ such that $2\nu + c > 0$, then Equation \eqref{kheqn} leads to a contradiction.
\end{proof}

\bibliographystyle{amsplain}

\end{document}